\numberwithin{equation}{section}
\lstdefinestyle{MATLABStyle}{
    language=Matlab,                        
    basicstyle=\ttfamily\small,             
    keywordstyle=\color{blue},              
    commentstyle=\color{green!50!black},    
    stringstyle=\color{red},                
    numbers=left,                           
    numberstyle=\tiny\color{gray},          
    stepnumber=1,                           
    numbersep=5pt,                          
    backgroundcolor=\color{white},          
    showspaces=false,                       
    showstringspaces=false,                 
    showtabs=false,                         
    frame=single,                           
    tabsize=2,                              
    captionpos=b,                           
    breaklines=true,                        
    breakatwhitespace=false,                
    escapeinside={\%*}{*)},                 
    morekeywords={*,...}                    
}
\theoremstyle{definition}
\newtheorem{theorem}{Theorem}[section]
\newtheorem{lemma}[theorem]{Lemma}
\newtheorem{definition}{Definition}[section]
\newtheorem{problem}{Problem}[section]
\newcommand{\di}{\displaystyle{}}
\theoremstyle{remark}
\title{Optimal Cosine Polynomials for Riemann Zeta Zero-Free Region}
\author{Hong Sheng Tan \\ \texttt{MAM2309005@XMU.EDU.MY}}  
\date{\today}  
\begin{document}
\maketitle

\section{Introduction}
\label{sec1}

The Riemann zeta function, denoted as $\zeta(s)$ where $s = \sigma + it$, is a fundamental object in analytic number theory, with significant applications in fields such as matrix theory and physics. It is initially defined for $\sigma > 1$ by the series:
\begin{align*}
\zeta(s) = \sum_{n=1}^{\infty} \frac{1}{n^s},
\end{align*}
which converges absolutely in this region. However, for $\sigma \leq 1$,   the series diverges. To address this, $\zeta(s)$ admits an analytic continuation to the entire complex plane, except for a simple pole at $s = 1$. The continuation is expressed as:
\begin{align*}
\zeta(s)=\frac{\pi^{\frac{s}{2}}}{\Gamma\left(\frac{s}{2}\right)}\left(-\frac{1}{s}-\frac{1}{1-s}+\int_1^{\infty} \sum_{n=-1}^{\infty} \exp{(-\pi n^2 x)}\left(x^{s / 2-1}+x^{(1-s) / 2-1}\right) d x\right),
\end{align*}
where $\Gamma(s)$ is the Gamma function.

The zeros of the Riemann zeta function are critical in understanding the distribution of prime numbers. These zeros are classified into trivial zeros, occurring at the negative even integers $s = -2, -4, -6, \ldots$, and nontrivial zeros, which lie within the critical strip $0 < \sigma < 1$. The Riemann Hypothesis conjectures that all nontrivial zeros have a real part $\di\sigma = \frac{1}{2}$. For further proofs and theoretical background on the Riemann zeta function, one can refer to Apostol \cite{ap1976}.

To study the behavior of nontrivial zeros of the Riemann zeta function, the zero-free region of the Riemann zeta function was first established by de la Vallée Poussin \cite{de99} in 1899. The zero-free region is an area in the critical strip where the Riemann zeta function does not vanish. The classical zero-free region has the form:
\begin{align*}
\sigma > 1 - \frac{1}{R \log |t|},
\end{align*}
where $R$ is a positive constant. In this region, $\zeta(s) \neq 0$.

Establishing such regions is crucial, as it directly influences the understanding of the distribution of prime numbers. The prime number counting function
\begin{align*}
\pi(x) = \sum_{p \leq x} 1,
\end{align*}
counts the number of primes less than $x$. Landau \cite{la08} proved that for any $\rho > R$:
\begin{align*}
\pi(x) = \int_{2}^{x} \frac{dy}{\log y} + O\left(x (\log x)^{-1/2} \exp{\left(-\sqrt{\log x / \rho}\right)}\right).
\end{align*}

A smaller $R$ will give a larger zero-free region and a smaller error term in the prime number counting function. Efforts to enlarge the zero-free region have driven researchers to reduce the constant $R$ using various techniques. These refinements improve the approximation of the prime number counting function and yield deeper insights into the zeta function’s zeros. Table~\ref{tab:1} summarizes key results in reducing the $R$ constant.

\begin{table}[h!]
\centering
\begin{tabular}{|c|c|c|}
\hline
Year & Author & $R$ \\
\hline
1899 & de la Vallée Poussin \cite{de99}& 30.47 \\
1962 & Rosser, Schoenfeld \cite{ro62} & 17.52 \\
1970 & Stechkin \cite{st70}& 9.65 \\
1975 & Rosser, Schoenfeld \cite{ro75}& 9.65 \\
2005 & Kadiri \cite{ka05}& 5.69 \\
2022 & Mossinghoff, Trudgian, Yang \cite{mo22}& 5.56 \\
\hline
\end{tabular}
\caption{Values of the $R$ constant determined by existing studies.}
\label{tab:1}
\end{table}

All of this work has employed a very special type of trigonometric polynomial. Given a positive integer $n\geq 2$, let $C_n$ denotes the set of cosine polynomials:
\begin{align*}
f(\phi) = \sum_{k=0}^{n} a_k \cos(k\phi),
\end{align*}
with the following properties:
\begin{enumerate}[label=A.\arabic*, ref=a.\arabic*]
    \item All the coefficients are nonnegative and real.
    \item The coefficient $a_1 > a_0 > 0$.
    \item $f(\phi) \geq 0$ for all real $\phi$.
\end{enumerate}
Then, the real valued functional $v: C_n \to \mathbb{R}$ is defined by:
\begin{align*}
v(f) = \frac{f(0) - a_0}{(\sqrt{a_1} - \sqrt{a_0})^2}.
\end{align*}

Landau \cite{la08} demonstrated that for any $f \in C_n$, one can take:
\begin{align*}
R = \frac{v(f)}{2},
\end{align*}
in the classical zero-free region, ensuring $\zeta(s) \neq 0$ in this region.

Naturally, the goal to enlarge the zero-free region and pursuits the smaller $R$ leads to the following problems.

\begin{problem} What is the exact value of
\begin{align*}
V_n = \inf_{f \in C_n} \frac{f(0) - a_0}{(\sqrt{a_1} - \sqrt{a_0})^2},
\end{align*}
and
\begin{align*}
V = \inf_{n \geq 2} V_n.
\end{align*}
\end{problem}
The problems to find the value of $V$ were studied by Westphal \cite{we38}, Stechkin \cite{st70}, Rosser and Schoenfeld \cite{ro62,ro75},  French \cite{fr66}, Kondrat'ev \cite{ko77,ar90}, Reztsov \cite{re86}, Arestov \cite{ar90, ar92}, Mossinghoff and Trudgian \cite{mo14}. Most of the works are dedicated to estimating the upper and lower bounds of $V$. As of now, the best two-sided estimates for $V$ are:
\begin{align*}
34.468305 < V < 34.503586.
\end{align*}

The exact value of $V_n$ is known only for $n = 2, 3, 4, 5, 6$.

To determine the exact value of $V_2$, French \cite{fr66} substituted $x=\cos\phi$ and expressed every cosine polynomial in $C_2$ in the form:
\begin{align*}
h(x) = (1 - a_2) + a_1 x + 2 a_2 x^2,
\end{align*}
which satisfies the condition $ h(x)\geq 0$ for all $x \in [-1, 1]$. French classified the polynomial in two classes based on the global minimum point $(x_0, h(x_0))$ of $h(x)$. The first class is when $x_0 \in [-1, 1]$ and the second class is when $x_0 \not\in [-1, 1]$. Then, French calculated the minimum value of $v(f)$ can be attained in two different classes, the smaller number being the exact value of $V_2$.

On the other hand, Arestov use the following properties of nonnegative trigonometric polynomials to evaluate the exact value of $V_n$ for $n=3,4,5,6$. Firstly, he noticed that for any trigonometric polynomial $f_n \in C_n$, the Fejér inequality \cite{po78} holds:
\begin{align}\label{eq1.1}
a_1 (f) \leq A(n) a_0 (f), \quad A(n) = 2 \cos \frac{\pi}{n+2}.
\end{align}
Secondly, for any $f\in C_n$ and positive number $k$, one has $v(kf)=v(f)$. By using the homogeneity of $v(f)$ and Fejér inequality, one can change the problem of finding $V_n$ to become:
\begin{align}\label{eq1.2}
V_n = \inf_{a \in (1, A(n)]} \frac{\chi_n(a)}{(\sqrt{a} - 1)^2},
\end{align}
where $\chi_n(a)$ is defined as:
\begin{align*}
\chi_n(a) = \inf \{ f(0) - 1 \mid f(\phi)\in C_n, a_0(f) = 1 \text{ and } a_1(f) = a \}.
\end{align*}

Arestov \cite{ar92} used some inequalities of nonnegative trigonometric polynomials to find the exact value of $\chi_n(a)$ on some specific intervals. This enabled him to calculate the exact value of $V_n$ for $n = 3, 4, 5, 6.$ The values of $V_n$ for $n = 2, 3, 4, 5, 6$ are summarized in Table~\ref{tab:2}.

\begin{table}[h!]
\centering
\begin{tabular}{|c|c|c|}
\hline
n & Author & $V_n$ \\
\hline
2 & French \cite{fr66} & 53.1390720  \\
3 & Arestov \cite{ar92} & 36.9199911  \\
4 & Arestov \cite{ar92} & 34.8992259 \\
5 & Arestov \cite{ar92} & 34.8992259 \\
6 & Arestov \cite{ar92} & 34.8992259 \\
\hline
\end{tabular}
\caption{Values of $V_n$ as determined by French and Arestov.}
\label{tab:2}
\end{table}

The goal of this research is to revise the exact values of $V_n$ for $n = 2, 3, 4, 5, 6$ and find the exact values of $V_n$ for $n=7,8$. In Section 2, the exact values of $V_2$ and $V_3$ will be found by improving French’s method. Instead of classifying the polynomial $h(x)$ by the location of the minimum point, the polynomial will be classified based on the location of its roots.

Sections 3 to Section 8 will be devoted to finding the exact values of $V_n$ for $n$ from 4 to 8. By applying the Fejér-Riesz theorem \cite{fe16}, the problem of finding $\chi_n(a)$ is transformed into an optimization problem in $\mathbb{R}^n$ and then solved using optimization techniques. The techniques and general procedure will be described in Section 3, while the key constraints and result of the optimization problems will be summarized in Section 4 to Section 8. In the last two sections, the following theorems will be presented.

\textbf{Theorem 1.1}: The exact value of $V_7$ is
\begin{align*}
V_7 = 34.6494874.
\end{align*}

\textbf{Theorem 1.2}: The exact values of $V_8$ is
\begin{align*}
V_8=34.5399155.
\end{align*}

This work aims to advance our understanding of the zero-free regions of the Riemann zeta function, thereby contributing to the broader field of analytic number theory and its applications to prime number theory.

\vfil\pagebreak
\section{The exact value of $V_2$ and $V_3$}

\subsection{Some properties of linear functional $v(f)$.}
This section is devoted to finding the exact values of $V_2$ and $V_3$. Firstly, some useful properties of the function $v(f)$ will be established. These properties will enable a focus on some special forms of polynomials to find the values of $V_2$ and $V_3$.

One particularly useful property is the homogeneity of the function $v(f)$, which implies that for $k>0$, the function $v$ satisfies $v(kf)=v(f)$. This property is advantageous because, instead of studying all polynomials $f(\phi) \in C_n$, the focus can be narrowed to polynomials of the form
\begin{align*}
f(\phi)=1+a \cos \phi+a_2 \cos 2 \phi+\ldots+a_n \cos n \phi,
\end{align*}
where $\di a \in\left(1,2 \cos \frac{\pi}{n+2}\right]$ by Fejér's Inequality (Equation (\ref{eq1.1})).

To explore next property of $v(f)$, the following two theorems imply that one only need to consider the cosine polynomial with a minimum of 0 in order to find the exact value of $V_n$.
\begin{theorem}\label{th2.1} 
Given
\begin{align*}
f(\phi)=a_0+a_1 \cos \phi+a_2 \cos 2 \phi+\ldots+a_n \cos n \phi \in C_n,
\end{align*}
let $m$ be the minimum of $f(\phi)$ when $\phi \in[0,2 \pi)$, then
\begin{align*}
m< a_0.
\end{align*}
\end{theorem}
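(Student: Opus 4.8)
The plan is to show that the minimum $m$ of $f$ on $[0,2\pi)$ is strictly less than $a_0$ by producing a point (or a weighted average of points) where $f$ takes a value below $a_0$. The natural device is the mean value of $f$ against a measure that kills the constant term's competitors. Concretely, I would integrate $f(\phi)$ against $d\mu$ where $\mu$ is chosen so that $\int \cos(k\phi)\,d\mu = 0$ for all $k \geq 1$ in the support of $f$ but $\int d\mu$ is a small positive number — for instance averaging over the $(n+1)$-th roots of unity scaled appropriately, i.e. $\frac{1}{N}\sum_{j=0}^{N-1} f(2\pi j/N)$ with $N = n+1$. For such $N$ every term $\cos(k\phi)$ with $1 \le k \le n$ averages to $0$ over the equally spaced points $2\pi j/N$ (since $k \not\equiv 0 \pmod N$), so the discrete average equals exactly $a_0$. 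Hence $a_0$ is an average of the $N$ values $f(2\pi j/N)$, which forces $m \le a_0$; to upgrade to a strict inequality I would argue that the values cannot all be equal to $a_0$.

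The key steps, in order, are as follows. First, fix $N = n+1$ and write $S = \frac{1}{N}\sum_{j=0}^{N-1} f(2\pi j / N)$. Second, expand using linearity and the identity $\frac{1}{N}\sum_{j=0}^{N-1}\cos(2\pi k j/N) = 0$ for $1 \le k \le n$ (and $=1$ for $k=0$), to conclude $S = a_0$. Third, observe $m \le S = a_0$, giving the weak inequality. Fourth, to get strictness, suppose for contradiction that $f(\phi) \ge a_0$ everywhere with equality at all $N$ sample points forced — more carefully, if $m = a_0$ then since each $f(2\pi j/N) \ge m = a_0$ and they average to $a_0$, we must have $f(2\pi j/N) = a_0$ for every $j$. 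I would then use the defining properties of $C_n$, in particular $a_1 > 0$, to derive a contradiction: a cleaner route is to use a \emph{different} weighted average. For example, $\frac{1}{N}\sum_{j=0}^{N-1} f(\theta + 2\pi j/N) = a_0$ for \emph{every} $\theta$, so $f - a_0$ has mean zero on every such arithmetic progression of points; if $f \ge a_0$ identically this forces $f \equiv a_0$, i.e. $a_1 = \cdots = a_n = 0$, contradicting $a_1 > a_0 > 0$.

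The main obstacle I anticipate is the strictness argument rather than the weak bound, which is essentially a one-line averaging observation. One must rule out the degenerate possibility $f \equiv a_0$; this is exactly where hypothesis A.2 ($a_1 > a_0 > 0$, in particular $a_1 > 0$) enters, and it must be invoked explicitly. An alternative, perhaps slicker, approach to strictness: since $a_1 > 0$, the value $f(\pi)$ in relation to $f(0)$ already shows $f$ is non-constant, and combined with $m \le a_0$ plus continuity one concludes $m < a_0$ — but the cleanest is the averaging-with-free-parameter-$\theta$ argument, since it directly yields $f \not\equiv a_0 \Rightarrow m < a_0$ without case analysis. I would also double-check the edge behaviour: the averaging identity requires $k \le n < N$, which holds with $N = n+1$, so no resonance term survives and the computation of $S$ is exact.
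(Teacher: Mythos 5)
Your argument is correct and is essentially the same averaging idea as the paper's: the paper simply uses the continuous mean $a_0=\frac{1}{2\pi}\int_0^{2\pi}f(\phi)\,d\phi\ge m$ in place of your discrete equispaced average, and obtains strictness the same way, by noting equality would force $f$ to be constant, contradicting $a_1>a_0>0$. Your discrete version and the extra care you take with the strictness step are both fine; no gap.
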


\begin{proof} This can be proven easily by considering the integration below,
$$
a_0=\frac{1}{2 \pi} \int_0^{2 \pi} f(\phi) d \phi \geq \frac{1}{2\pi}(2\pi) m=m.
$$

The equality holds if and only if $f(\phi)$ is a constant function, which is impossible in the context of this discussion. This completes the proof.
\end{proof}
The next theorem states that given a polynomial $f(\phi) \in C_n$ with minimum $m>0$, another polynomial $\bar{f}(\phi) \in C_n$ with minimum 0 can always be found such that $v(f)>v(\bar{f})$. This clearly implies $v(f)>V_n$, thus $f(\phi)$ is not a candidate for the infimum.
\begin{theorem}\label{th2.2}
Given $f(\phi) \in C_n$ with minimum $m>0$, let
$$
\bar{f}(\phi)=f(\phi)-m .
$$

Then $\bar{f}(\phi) \in C_n$ and
$$
v(f)>v(\bar{f}).
$$
\end{theorem}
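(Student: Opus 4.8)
The plan is to proceed in two stages: first confirm that $\bar f$ satisfies the three defining conditions of $C_n$, and then compare $v(f)$ and $v(\bar f)$ by noting that passing from $f$ to $\bar f$ leaves the numerator of $v$ unchanged while strictly enlarging the denominator.

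Writing $f(\phi) = a_0 + \sum_{k=1}^{n} a_k\cos k\phi$, subtracting the constant $m$ affects only the zeroth coefficient, so $\bar f$ has coefficients $\bar a_0 = a_0 - m$ and $\bar a_k = a_k$ for $k \ge 1$. Condition A.1 for the coefficients with $k\ge 1$ is inherited from $f$; for $\bar a_0$ I would invoke Theorem~\ref{th2.1}, which gives $m < a_0$ and hence $\bar a_0 = a_0 - m > 0$. Condition A.2 then follows because $m>0$ forces $\bar a_1 = a_1 > a_0 > a_0 - m = \bar a_0 > 0$. Condition A.3 is immediate from the definition of $m$: $\bar f(\phi) = f(\phi) - m \ge 0$ for all real $\phi$. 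In particular $\bar a_0 > 0$ also guarantees that $v(\bar f)$ is well defined, since $\sqrt{\bar a_1}\neq\sqrt{\bar a_0}$.

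For the inequality, set $N = f(0) - a_0 = \sum_{k=1}^{n} a_k$, which is positive (indeed $N \ge a_1 > 0$ by condition A.2). The key algebraic observation is that $\bar f(0) - \bar a_0 = \bigl(f(0)-m\bigr) - \bigl(a_0 - m\bigr) = N$, so the numerator of $v$ is the same for $f$ and for $\bar f$, giving
\[
v(f) = \frac{N}{(\sqrt{a_1}-\sqrt{a_0})^{2}}, \qquad v(\bar f) = \frac{N}{(\sqrt{a_1}-\sqrt{a_0-m})^{2}}.
\]
Since $0 < a_0 - m < a_0 < a_1$, taking square roots yields $0 < \sqrt{a_1}-\sqrt{a_0} < \sqrt{a_1}-\sqrt{a_0-m}$, so the denominator for $\bar f$ is strictly larger and still positive. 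Dividing the fixed positive quantity $N$ by the larger denominator gives $v(\bar f) < v(f)$, as claimed.

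The argument is entirely elementary and I do not anticipate a real obstacle. The only points demanding attention are that the strict inequality $m < a_0$ (hence $\bar a_0 > 0$) must come from Theorem~\ref{th2.1} rather than from the mere definition of $m$, and that the shift by $m$ cancels in the numerator of $v$, so that only the behaviour of the denominator — which is strictly monotone in $\bar a_0$ — is what drives the comparison.
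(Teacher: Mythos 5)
Your proposal is correct and follows essentially the same route as the paper: both verify $\bar a_0 = a_0 - m > 0$ via Theorem~\ref{th2.1}, observe that the numerator $\sum_{k\ge 1} a_k$ is unchanged, and conclude from $\sqrt{a_0 - m} < \sqrt{a_0}$ that the denominator strictly increases, forcing $v(\bar f) < v(f)$. Your write-up is in fact slightly more complete, since you also explicitly check the nonnegativity condition A.3 for $\bar f$, which the paper leaves implicit.
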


\begin{proof}  To prove $\bar{f}(\phi) \in C_n$, note that $a_0(\bar{f})=a_0(f)-m>0$ by Theorem \ref{th2.1}. Moreover, it holds that
$$
a_1(\bar{f})=a_1(f)>a_0(f)>a_0(\bar{f}).
$$

Lastly,
\begin{align*}
v(\bar{f})&=\frac{a_1(f)+a_2(f)+a_3(f)+\ldots+a_n(f)}{\left(\sqrt{a_1(f)}-\sqrt{a_0(f)-m}\right)^2}\\
&<\frac{a_1(f)+a_2(f)+a_3(f)+\ldots+a_n(f)}{\left(\sqrt{a_1(f)}-\sqrt{a_0(f)}\right)^2}\\
&=v(f).
\end{align*}

This completes the proof.

\end{proof}
With the previous theorems, it is established that only cosine polynomials with a minimum of 0 need to be considered in the process of evaluating exact value of $V_n$. The next step is to analyze the properties of the root of these cosine polynomials.
\begin{theorem}\label{th2.3}
Given a non-negative cosine polynomial $f(\phi) \in C_n$ with minimum 0 , where
$$
f(\phi)=a_0+a_1 \cos \phi+a_2 \cos 2 \phi+\ldots+a_n \cos n \phi,
$$
then $f(\phi)$ must have at least one root $\phi_0$ in the interval $[0,2 \pi)$, and it must satisfy one of the following:
\begin{enumerate}[label=\alph*., ref=\alph*.]
\item The root $\phi_0=\pi$ (or $\cos\phi_0=-1$).
\item The root must have even order.
\end{enumerate}
This will imply if $\phi_0\neq \pi$ (or $\cos\phi_0 \neq -1$), then it must have even order.
\end{theorem}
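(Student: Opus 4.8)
The plan is to combine the substitution $x=\cos\phi$ with the nonnegativity of $f$. Since $f$ is continuous and $[0,2\pi]$ is compact, the minimum value $0$ is attained at some $\phi_0$, so a root exists. Writing $\cos(k\phi)$ as a polynomial in $\cos\phi$ (the Chebyshev polynomial $T_k$), one gets $f(\phi)=h(\cos\phi)$ for a real polynomial $h$ of degree $n$, and because $\cos$ maps $\mathbb{R}$ onto $[-1,1]$, the condition $f(\phi)\ge 0$ for all real $\phi$ is equivalent to $h(x)\ge 0$ for all $x\in[-1,1]$. I would also observe that $\phi_0=0$ cannot happen, since $f(0)=\sum_{k=0}^n a_k\ge a_0+a_1>0$ by properties A.1 and A.2; hence the only degenerate endpoint of $[-1,1]$ that a root $x_0=\cos\phi_0$ can reach is $x_0=-1$, i.e. $\phi_0=\pi$, which is precisely alternative (a).

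Then I would argue by cases on $\phi_0$. If $\phi_0=\pi$ we are in case (a). Otherwise $\phi_0\in(0,2\pi)\setminus\{\pi\}$, so $x_0:=\cos\phi_0\in(-1,1)$ is interior to $[-1,1]$; then $h(x_0)=f(\phi_0)=0$ while $h(x)\ge 0$ on an entire neighbourhood of $x_0$. Factoring $h(x)=(x-x_0)^m g(x)$ with $g(x_0)\ne 0$ and $m\ge 1$, continuity of $g$ makes $g$ keep a constant sign near $x_0$, and then $h(x)\ge 0$ near $x_0$ is possible only when $(x-x_0)^m$ keeps a constant sign near $x_0$, i.e. only when $m$ is even. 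So $x_0$ — equivalently $\phi_0$, since $\phi\mapsto\cos\phi$ is a local diffeomorphism on each of $(0,\pi)$ and $(\pi,2\pi)$ — is a root of even order, which is alternative (b). As a quick cross-check, $\phi_0$ being a global minimum of the smooth periodic function $f$ gives $f'(\phi_0)=0$, and $f'(\phi)=-\sin\phi\,h'(\cos\phi)$, so $h'(x_0)=0$ when $\sin\phi_0\ne 0$, already forcing multiplicity at least $2$. The closing sentence of the statement is just the contrapositive repackaging of this dichotomy.

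The argument is short; the only point that needs genuine care is the bookkeeping of what ``order of a root'' means and why $\phi_0=\pi$ is singled out. Near $\phi_0\in\{0,\pi\}$ the map $\phi\mapsto\cos\phi$ is two-to-one (its derivative $-\sin\phi_0$ vanishes), so a root of $h$ of order $r$ at $x=\pm1$ corresponds to a root of $f$ of order $2r$ in $\phi$; and at the endpoint $x=-1$ the polynomial $h$ is constrained to be nonnegative only on one side, so it may legitimately vanish there to odd order while $f$ still vanishes to even order in $\phi$. Away from the endpoints this discrepancy disappears and parity transfers directly. In the write-up I would state explicitly that ``order'' is meant as the order of $x_0=\cos\phi_0$ as a root of $h$ — matching French's framework in which every element of $C_n$ is viewed as a nonnegative polynomial $h(x)$ on $[-1,1]$ — so that the exceptional status of $\phi_0=\pi$ is unambiguous; beyond that, there is no real obstacle.
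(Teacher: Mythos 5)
Your proposal is correct and follows essentially the same route as the paper: substitute $x=\cos\phi$, observe that an interior root $x_0\in(-1,1)$ of the nonnegative polynomial $h$ is a local minimum and hence of even order, and rule out $x_0=1$ via $f(0)=\sum_{k=0}^n a_k>0$. Your extra care about what ``order'' means under the two-to-one map $\phi\mapsto\cos\phi$ near $\phi=0,\pi$ is a welcome clarification that the paper leaves implicit, but it does not change the argument.
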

\begin{proof}
By substituting $x=\cos \phi, f(\phi)$ can be transformed into the polynomial form:
$$
h(x)=b_n x^n+\ldots+b_2 x^2+b_1 x+b_0,
$$
where the fact that $f(\phi)$ is non-negative and has a minimum of zero implies $h(x)$ is non-negative in the interval $[-1,1]$ and has at least one root in this interval. If the root of $h(x)$ lies within the interval $(-1,1)$, it must be of even order, as it also functions as a local minimum point for the function $h(x)$. In other words, only $x=1$ and $x=-1$ can serve as the root in $[-1,1]$ with odd order.

However, $x=1$ cannot serve as the root of $h(x)$. This is shown by contradiction. If $h(x)$ has $x=1$ as its root, then $f(\phi)$ can be expressed as
$$
f(\phi)=(\cos \phi-1) g_{n-1}(\phi),
$$
where $g_{n-1}(\phi)$ is a nonnegative trigonometric polynomial with degree $n-1$, this implies $f(0)=0$. This will never happen since
$$
f(0)=a_n+a_{n-1}+\ldots+a_2+a_1+a_0>0.
$$

This completes the proof.
\end{proof}

Finally, to find the exact value of $V_n$, it is enough to consider only cosine polynomials with at least one double root. Using a similar method in the proof of Theorem \ref{th2.2}, it will be shown that given any cosine polynomial $f(\phi) \in C_n$ which does not have a double root, a polynomial $\bar{f}(\phi) \in C_n$ with a double root can always be found such that $v(f)>v(\bar{f})$. This implies $f(\phi)$ is not a candidate for attaining infimum.

The exact formula of $\bar{f}(\phi)$ will be provided in Theorem \ref{th2.4}, and subsequently, in Theorem \ref{th2.5}, it will be shown that $\bar{f}$ satisfies the rquire properties.
\begin{theorem}\label{th2.4}
Given
$$
\begin{aligned}
& f(\phi)=(\cos \phi+1)\left(f_{n-1}(\phi)+m\right), \\
& \bar{f}(\phi)=(\cos \phi+1) f_{n-1}(\phi),
\end{aligned}
$$
where $m>0$ and $f_{n-1}(\phi)$ is an $(n-1)$-degree nonnegative trigonometric polynomial with minimum 0, then
\begin{enumerate}[label=\alph*., ref=\alph*.]
\item The polynomial $f(\phi) \in C_n$ if and only if $\bar{f}(\phi) \in C_n$.
\item If $f(\phi) \in C_n$, then
$$
v(f(\phi))>v(\bar{f}(\phi)) .
$$
\end{enumerate}
\end{theorem}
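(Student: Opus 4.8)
The plan is to reduce the entire statement to one algebraic identity obtained by expanding the product:
\[
f(\phi) = (\cos\phi+1)f_{n-1}(\phi) + m(\cos\phi+1) = \bar f(\phi) + m + m\cos\phi,
\]
so that the Fourier coefficients satisfy $a_0(f) = a_0(\bar f) + m$, $a_1(f) = a_1(\bar f) + m$, and $a_k(f) = a_k(\bar f)$ for all $2\le k\le n$. In particular $a_1(f) - a_0(f) = a_1(\bar f) - a_0(\bar f)$, and $f,\bar f$ share all coefficients from the second one onward. Both parts will follow by tracking these three relations.

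For part (a) I would verify the three defining properties of $C_n$ in turn. Property A.3 is automatic for both polynomials, since each is the product of the nonnegative factor $\cos\phi+1$ with a nonnegative trigonometric polynomial ($f_{n-1}+m\ge m>0$ for $f$, and $f_{n-1}\ge 0$ for $\bar f$). For the coefficient conditions, the identity above shows that the strict inequality $a_1>a_0$ of A.2 holds for $f$ if and only if it holds for $\bar f$; and since $m>0$, the sign conditions $a_0>0$, $a_1>0$, $a_k\ge 0$ ($k\ge 2$) of A.1 transfer from $\bar f$ to $f$ immediately, and from $f$ to $\bar f$ once one knows $a_0(\bar f)>0$. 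The only step that is not pure bookkeeping is this last positivity: here I would note that if $f\in C_n$ then $f_{n-1}\not\equiv 0$ (otherwise $f=m(\cos\phi+1)$ has $a_1=a_0$, violating A.2), so in $a_0(\bar f) = \frac{1}{2\pi}\int_0^{2\pi}(\cos\phi+1)f_{n-1}(\phi)\,d\phi$ the integrand is continuous, nonnegative, and not identically zero, forcing the integral to be strictly positive.

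For part (b) I would use the form $v(g) = \frac{a_1(g)+\cdots+a_n(g)}{(\sqrt{a_1(g)}-\sqrt{a_0(g)})^2}$ already exploited in the proof of Theorem \ref{th2.2}. Writing $q=a_0(\bar f)$, $p=a_1(\bar f)$, and $S=a_1(\bar f)+\cdots+a_n(\bar f)$ — all strictly positive, since $\bar f\in C_n$ by part (a), so $p>q>0$ and $S\ge p>0$ — the coefficient relations give $v(\bar f) = \frac{S}{(\sqrt p-\sqrt q)^2}$ and $v(f) = \frac{S+m}{(\sqrt{p+m}-\sqrt{q+m})^2}$. It then suffices to show that passing from $\bar f$ to $f$ strictly increases the numerator and strictly decreases the denominator. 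The first is clear ($S+m>S$); for the second, rationalizing gives $\sqrt{p+m}-\sqrt{q+m} = \frac{p-q}{\sqrt{p+m}+\sqrt{q+m}} < \frac{p-q}{\sqrt p+\sqrt q} = \sqrt p-\sqrt q$, since all denominators are positive and $\sqrt{p+m}+\sqrt{q+m}>\sqrt p+\sqrt q$. Combining, $v(f) = \frac{S+m}{(\sqrt{p+m}-\sqrt{q+m})^2} > \frac{S}{(\sqrt p-\sqrt q)^2} = v(\bar f)$.

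The main (essentially only) obstacle is the strict positivity of $a_0(\bar f)$ in part (a): it is the single place where the hypotheses "$f_{n-1}$ nonnegative" and "$f\in C_n$" do genuine work rather than bookkeeping. The stronger hypothesis that $f_{n-1}$ has minimum $0$ is not needed for Theorem \ref{th2.4} as stated; it merely records the shape in which $\bar f$ will be applied in the subsequent reduction to cosine polynomials possessing a double root.
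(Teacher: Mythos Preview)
Your proof is correct and follows essentially the same route as the paper: expand $f=\bar f+m+m\cos\phi$, read off the coefficient relations, and compare numerators and denominators in $v$. If anything, you are more careful than the paper in the direction $f\in C_n\Rightarrow\bar f\in C_n$, where you explicitly justify $a_0(\bar f)>0$ via the integral (the paper glosses over this point and contains a sign typo in its coefficient comparison); your rationalization argument for the denominator inequality in (b) is also cleaner than the paper's bare assertion.
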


\begin{proof}
First of all, one can easily see that both $f(\phi)$ and $\bar{f}(\phi)$ satisfy the nonnegative condition. To prove the statement (a), note that
$$
f(\phi)=\bar{f}(\phi)+m \cos \phi+m.
$$

If $f(\phi) \in C_n$, then all the coefficients of $\bar{f}(\phi)$ should also be nonnegative since $a_k(\bar{f}) \geq a_k(f)$ for $k=1,2, \ldots, n$. Lastly, it is also easy to check $a_1(\bar{f})-a_0(\bar{f})=a_1(f)-$ $a_0(f)>0$, which implies $\bar{f}(\phi) \in C_n$.

Conversely, assume that $\bar{f}(\phi) \in C_n$. It will be shown that $f(\phi) \in C_n$. Firstly, by Theorem \ref{th2.1}, $a_0(f)>0$. The fact that $a_1(\bar{f})-a_0(\bar{f})=a_1(f)-a_0(f)>0$ implies
$$
a_1(f)>a_0(f)>0
$$

Moreover, $a_n(f) \geq 0$ for $n \geq 2$ can be deduced by the fact $a_n(f)=a_n(\bar{f}) \geq 0$ for $n \geq 2$.

To prove the second statement, just note that for $m>0$, it holds that
$$
\left(\sqrt{a_1(\bar{f})+m}-\sqrt{a_0(\bar{f})+m}\right)^2<\left(\sqrt{a_1(\bar{f})}-\sqrt{a_0(\bar{f})}\right)^2
$$

This implies
$$
v(f(\phi))=\frac{a_3(\bar{f})+a_2(\bar{f})+a_1(\bar{f})+m}{\left(\sqrt{a_1(\bar{f})+m}-\sqrt{a_0(\bar{f})+m}\right)^2}>\frac{a_3(\bar{f})+a_2(\bar{f})+a_1(\bar{f})}{\left(\sqrt{a_1(\bar{f})}-\sqrt{a_0(\bar{f})}\right)^2}=v(\bar{f}(\phi))
$$

This completes the proof.

\end{proof}

To complete the discussion of this subsection, the final theorem will be proven, which shows the $\bar{f}(\phi)$ has at least one double root.
\begin{theorem}\label{th2.5}
Given $f(\phi) \in C_n$ which does not have a double root, it is always possible to find another $\bar{f}(\phi) \in C_n$ with at least one double root such that
$$
v(f)>v(\bar{f}).
$$

\end{theorem}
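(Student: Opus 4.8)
The plan is to chain Theorems~\ref{th2.2}, \ref{th2.3} and \ref{th2.4} into a short reduction. First I would dispose of the case $\min f>0$: writing $m=\min f$, Theorem~\ref{th2.2} shows $f-m\in C_n$ has minimum $0$ and $v(f)>v(f-m)$, so (after relabelling) it suffices to treat $f\in C_n$ with minimum $0$ — and if this $f-m$ already possesses a double root we are finished with $\bar f=f-m$. So assume $f\in C_n$ has minimum $0$ and no double root. By Theorem~\ref{th2.3}, $f$ has a root in $[0,2\pi)$; a root with $\cos\phi_0\in(-1,1)$ would have even order $\ge 2$ and hence be a double root, while $\cos\phi_0=1$ is excluded, so the \emph{only} root of $f$ is a simple root at $\phi=\pi$. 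Passing to the polynomial $h(x)$ in $x=\cos\phi$, this means $h(x)=(x+1)p(x)$ with $p$ strictly positive on all of $[-1,1]$ (positivity on $(-1,1]$ because $h$ has no other root there, and $p(-1)>0$ because the root of $h$ at $-1$ is simple). Equivalently $f(\phi)=(\cos\phi+1)g(\phi)$ with $g$ a cosine polynomial of degree at most $n-1$ satisfying $g(\phi)>0$ for all real $\phi$.

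Next I would set $m'=\min g>0$ and $f_{n-1}=g-m'$, so that $f_{n-1}$ is a nonnegative cosine polynomial of degree at most $n-1$ with minimum $0$ and $f(\phi)=(\cos\phi+1)\bigl(f_{n-1}(\phi)+m'\bigr)$. Applying Theorem~\ref{th2.4} to this splitting (with the role of $m$ played by $m'>0$), and using $f\in C_n$, yields that $\bar f(\phi):=(\cos\phi+1)f_{n-1}(\phi)$ lies in $C_n$ and that $v(f)>v(\bar f)$. Combined with the first paragraph this gives $v(f)>v(\bar f)$ in every case, so it only remains to check that $\bar f$ has a double root.

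For that I would argue on the root of $f_{n-1}$. Since $\bar f=(\cos\phi+1)f_{n-1}$ with both factors nonnegative and $f_{n-1}$ attaining $0$, the polynomial $\bar f$ has minimum $0$ and lies in $C_n$. Let $x_0\in[-1,1]$ be a root of $f_{n-1}$ viewed as a polynomial in $x=\cos\phi$. If $x_0=-1$ then $(x+1)\mid f_{n-1}$, so $(x+1)^2$ divides $\bar f$: a double root. If $x_0\in(-1,1)$ then $x_0$ is an interior minimum of a nonnegative polynomial, hence a root of even order $\ge 2$, and this multiplicity is unchanged after multiplying by the nonvanishing factor $x_0+1$. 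Finally $x_0=1$ is impossible, since a root at $\cos\phi=1$ is forbidden for a member of $C_n$ with minimum $0$ by Theorem~\ref{th2.3}. In all cases $\bar f$ has a double root, which completes the argument.

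The computations above are routine; the step that needs genuine care is the last paragraph, where one must re-invoke both the even-multiplicity property of interior zeros of a nonnegative polynomial and Theorem~\ref{th2.3} (to rule out a zero at $\cos\phi=1$) in order to certify that the newly constructed $\bar f$ really does have a double root. A minor additional point is making sure the case split at the very beginning — whether $\min f>0$, and whether $f-m$ is itself already doubly-rooted — exhausts all possibilities, so that the strict inequality $v(f)>v(\bar f)$ holds uniformly.
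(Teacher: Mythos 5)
Your proposal is correct and follows essentially the same route as the paper: reduce via Theorem~\ref{th2.2} to a polynomial with a root, use Theorem~\ref{th2.3} to obtain the factorization $f(\phi)=(\cos\phi+1)\bigl(f_{n-1}(\phi)+m\bigr)$ with $m>0$, apply Theorem~\ref{th2.4} to get $\bar f\in C_n$ with $v(f)>v(\bar f)$, and finish with the same three-way case analysis on the zero of $f_{n-1}$ (interior zero of even order, zero at $x=-1$, zero at $x=1$ excluded). Your write-up is in fact slightly more careful than the paper's in justifying why the root at $\phi=\pi$ must be simple and hence why the cofactor's minimum $m'$ is strictly positive, but the argument is the same.
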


\begin{proof}
One can always use Theorem \ref{th2.2} to replace $f(\phi)$ with a cosine polynomial that has at least one root. Thus, without loss of generality, only the cosine polynomial with at least a single root in the interval $[0,2 \pi)$ needs to be considered. By Theorem \ref{th2.3}, the cosine polynomial $f(\phi)$ must have the form
$$
f(\phi)=(\cos \phi+1)\left(f_{n-1}(\phi)+m\right)
$$
where $f_{n-1}(\phi)$ is a $(n-1)$-degree nonnegative trigonometric polynomial with minimum 0 and $m>0$. Then, let
$$
\bar{f}(\phi)=(\cos \phi+1) f_{n-1}(\phi)
$$
Theorem \ref{th2.4} implies that $\bar{f}(\phi) \in C_n$ and $v(f)>v(\bar{f})$. The last thing to prove is that $\bar{f}(\phi)$ has at least one double root. Let $x=\cos \phi$ and transform $\bar{f}(\phi)$ to $\bar{h}(x)$, that is $\bar{h}(\cos \phi)=\bar{f}(\phi)$. Then
$$
\bar{h}(x)=(x+1) h_{n-1}(x)
$$
where $h_{n-1}(x)$ is nonnegative in the interval $[-1,1]$, and the minimum in this interval is 0 . The polynomial $h_{n-1}(x)$ has a zero in the interval $[-1,1]$. If the zero is in $(-1,1)$, then it is a double zero. If the zero is $x=-1$, then $\bar{h}(x)$ has a double zero at $x=-1$. If the zero is $x=1, then f(0)=h(1)=0$ implies $\bar{f}(\phi)=\bar{h}(\cos \phi) \notin C_n$, which implies $f(\phi) \notin C_n$, which is a contradiction. The proof is complete.
\end{proof}

As a summary, to find the exact value of $V_n$, it is only necessary to consider the cosine polynomials in the following set.
\begin{definition}\label{df2.1}
The set $C_n^0$ is the set containing all degree $n$ cosine polynomials
$$
f(\phi)=a_0+a_1 \cos \phi+a_2 \cos 2 \phi+\ldots+a_n \cos n \phi
$$
which satisfy the following three conditions:
\begin{enumerate}[label=B.\arabic*, ref=a.\arabic*]
\item The function $f(\phi)$ is nonnegative and has a double root.
\item All the coefficients are nonnegative.
\item The coefficient $a_1>a_0>0$.
\end{enumerate} 
\end{definition}
By focusing on the special forms of polynomials and utilizing the properties discussed in the theorems above, the process of finding the exact values of $V_2$ and $V_3$ can be simplified. 
\subsection{Exact value of $V_2$.}
To find the exact value of $V_2$, Theorem \ref{th2.5} and Definition \ref{df2.1} imply that it is only necessary to consider the cosine polynomial with double root. Moreover, the homogeneity of $v(f)$ implies it is enough to consider cosine polynomial with the form
\[
f(\phi) = (\cos \phi + \alpha)^2,
\]
where $\alpha \in [-1, 1]$. However, one still need to check when $f(\phi)\in C_2^0$.

\begin{theorem}\label{th2.6}
For $-1 \leq \alpha \leq 1$, the polynomial
\[
f(\phi) = (\cos \phi + \alpha)^2 \in C_2^0
\]
if and only if $\di 1 - \frac{1}{\sqrt{2}} < \alpha < 1$.
\end{theorem}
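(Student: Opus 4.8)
The plan is to reduce membership in $C_2^0$ to three explicit inequalities in the single parameter $\alpha$ and then intersect them. First I would expand the candidate polynomial: using $\cos^2\phi = \tfrac12 + \tfrac12\cos 2\phi$ gives
\[
f(\phi) = (\cos\phi+\alpha)^2 = \left(\alpha^2 + \tfrac12\right) + 2\alpha\cos\phi + \tfrac12\cos 2\phi,
\]
so that $a_0 = \alpha^2 + \tfrac12$, $a_1 = 2\alpha$ and $a_2 = \tfrac12$. With these formulas in hand, each of the three defining conditions of $C_2^0$ in Definition~\ref{df2.1} translates into a condition on $\alpha$.

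Next I would verify the conditions one at a time. Nonnegativity of $f$ is automatic, since $f$ is a square, so the content of condition B.1 is the double-root requirement: $f(\phi_0)=0$ holds exactly when $\cos\phi_0=-\alpha$, which forces $\alpha\in[-1,1]$. For $\alpha\in(-1,1)$ the zero occurs at a point $\phi_0$ with $\cos\phi_0=-\alpha\in(-1,1)$, and a short computation gives $f''(\phi_0)=2(1-\alpha^2)>0$, so $\phi_0$ is a root of multiplicity exactly two; at $\alpha=1$ (resp.\ $\alpha=-1$) the only zero in $[0,2\pi)$ sits at $\phi_0=\pi$ (resp.\ $\phi_0=0$), where $\cos\phi+\alpha$ already vanishes to order two and hence $f$ vanishes to order four, so $f$ has no double root. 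Thus B.1 is equivalent to $-1<\alpha<1$. For B.2, among $a_0,a_1,a_2$ only $a_1=2\alpha$ can fail to be nonnegative, so B.2 amounts to $\alpha\ge 0$. For B.3, $a_0>0$ is automatic, and $a_1>a_0$ reads $2\alpha>\alpha^2+\tfrac12$, i.e.\ $\alpha^2-2\alpha+\tfrac12<0$; the quadratic formula (roots $1\pm\tfrac1{\sqrt2}$) shows this holds precisely for $1-\tfrac1{\sqrt2}<\alpha<1+\tfrac1{\sqrt2}$.

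Finally I would intersect the three ranges $(-1,1)$, $[0,\infty)$ and $\left(1-\tfrac1{\sqrt2},\,1+\tfrac1{\sqrt2}\right)$. Since $1-\tfrac1{\sqrt2}\approx 0.293>0$, the constraint $\alpha\ge 0$ from B.2 and the lower endpoint $-1$ from B.1 are both absorbed, so the binding constraints are $\alpha>1-\tfrac1{\sqrt2}$ (from B.3) and $\alpha<1$ (from B.1). This yields $f\in C_2^0$ if and only if $1-\tfrac1{\sqrt2}<\alpha<1$, as claimed.

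I expect the only genuinely delicate point — the main obstacle — to be the double-root clause at the endpoints $\alpha=\pm1$. The coefficient inequalities B.2 and B.3 are routine, and in particular they would still allow $\alpha=1$; the strict upper bound $\alpha<1$ comes entirely from B.1, via the observation that a zero forced to land at $\phi\in\{0,\pi\}$ is a root of order four rather than a genuine double root in the sense of Definition~\ref{df2.1}. Everything else is elementary algebra.
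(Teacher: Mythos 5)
Your proposal is correct and follows essentially the same route as the paper: expand $(\cos\phi+\alpha)^2$ to read off $a_0=\alpha^2+\frac{1}{2}$, $a_1=2\alpha$, $a_2=\frac{1}{2}$, and reduce membership in $C_2^0$ to the quadratic inequality $2\alpha>\alpha^2+\frac{1}{2}$. Where you differ is in completeness at the boundary: the paper's proof records only the condition $a_1>a_0$ and then asserts the interval $\left(1-\frac{1}{\sqrt{2}},\,1\right)$, even though that inequality alone gives $\left(1-\frac{1}{\sqrt{2}},\,1+\frac{1}{\sqrt{2}}\right)$ and intersecting with the hypothesis $-1\leq\alpha\leq 1$ would only yield $\alpha\leq 1$; your appeal to condition B.1 at $\alpha=\pm 1$ is what actually delivers the strict upper bound, and your explicit check of B.2 (which is absorbed by B.3) is likewise omitted in the paper. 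One caveat: your exclusion of $\alpha=1$ rests on reading ``has a double root'' as ``has a root of multiplicity exactly two,'' whereas elsewhere the paper appears to use the inclusive convention --- in Theorem \ref{2.8} the case $\alpha=\beta=1$ is admitted into $C_3^0$ even though $(\cos\phi+1)^3$ has a single zero of order six. Under the inclusive reading, $\alpha=1$ satisfies all three conditions of Definition \ref{df2.1} and the strict inequality in the statement would fail at that endpoint. Since your reading is the only one under which the theorem as stated is true, and since the endpoint is immaterial for the value of $V_2$ (the functional $v$ is continuous and its minimizer lies in the interior), this is a defect of the paper's conventions rather than of your argument.
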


\begin{proof}
The nonnegative property of $f(\phi)$ is obvious. Expanding $f(\phi)$, we have
\[
f(\phi) = \cos^2 \phi + 2\alpha \cos \phi + \alpha^2 = \cos 2\phi + 2\alpha \cos \phi + \alpha^2 + 1.
\]
Then $f(\phi) \in C_2^0$ if and only if
\[
2\alpha > \alpha^2 + \frac{1}{2}.
\]
Rearranging, we get
\[
\alpha^2 - 2\alpha + \frac{1}{2} < 0,
\]
which simplifies to
\[
1 - \frac{1}{\sqrt{2}} < \alpha < 1.
\]
This completes the proof.
\end{proof}
The next theorem presents the exact value of $V_2$ and the cosine polynomial $f(\phi)\in C_2$ such that $v(f)=V_2$. 
\begin{theorem}\label{th2.7}
The exact value of $V_2$ is
\[
V_2 = 53.1390720,
\]
which can be obtained by considering
\[
f(\phi) = (\cos \phi + 0.7415574)^2.
\]
\end{theorem}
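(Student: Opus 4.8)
The plan is to reduce the problem to a one-variable minimisation and then apply elementary calculus. By Theorem~\ref{th2.5} and Definition~\ref{df2.1}, together with the homogeneity $v(kf)=v(f)$, the infimum defining $V_2$ is unchanged if we restrict to monic double-root polynomials, i.e.\ to the family $f_\alpha(\phi)=(\cos\phi+\alpha)^2$; and Theorem~\ref{th2.6} pins down the admissible parameters as exactly $\alpha\in\bigl(1-\tfrac{1}{\sqrt2},\,1\bigr)$. The first step is then to expand $f_\alpha$. Using $\cos^2\phi=\tfrac12(1+\cos 2\phi)$ gives $a_0=\alpha^2+\tfrac12$, $a_1=2\alpha$, $a_2=\tfrac12$, so that $f_\alpha(0)-a_0=a_1+a_2=2\alpha+\tfrac12$ and
\[
v(f_\alpha)=\frac{2\alpha+\tfrac12}{\bigl(\sqrt{2\alpha}-\sqrt{\alpha^2+\tfrac12}\,\bigr)^2}=:\psi(\alpha).
\]

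The second step is to minimise $\psi$ over $\bigl(1-\tfrac1{\sqrt2},\,1\bigr)$. As $\alpha\to\bigl(1-\tfrac1{\sqrt2}\bigr)^{+}$ the inequality of Theorem~\ref{th2.6} degenerates to $2\alpha=\alpha^2+\tfrac12$, so the denominator tends to $0^{+}$ while the numerator tends to $\tfrac52-\sqrt2>0$, whence $\psi\to+\infty$; meanwhile $\psi(1)=\frac{2.5}{(\sqrt2-\sqrt{3/2})^{2}}\approx 69.6$, and a single interior evaluation (e.g.\ $\psi(\tfrac34)\approx 53.16$) shows the infimum is strictly smaller than $\psi(1)$, hence attained at an interior critical point. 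I would locate it by solving $\psi'(\alpha)=0$, most conveniently in the logarithmic form $\frac{4}{4\alpha+1}=(\log D(\alpha))'$, where $D(\alpha)=\alpha^2+2\alpha+\tfrac12-2\sqrt{2\alpha^3+\alpha}$ is the denominator with its single radical isolated. Clearing that radical yields a polynomial equation whose unique root in the admissible range is $\alpha_0=0.7415574\ldots$; substituting back gives $\psi(\alpha_0)=53.1390720\ldots$, which is therefore the value of $V_2$, attained by $f(\phi)=(\cos\phi+0.7415574)^2$.

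The step I expect to be the main obstacle is the algebra of the critical-point equation: differentiating $\psi$ and clearing the nested radical $\sqrt{2\alpha}-\sqrt{\alpha^2+\tfrac12}$ produces a polynomial of moderate degree, and one must take care to discard the extraneous roots introduced by squaring and to verify that exactly one admissible root exists (so that the interior critical point, and hence the global minimiser, is unique and the endpoint behaviour above forces it to be a minimum). A cleaner alternative I would try first is the substitution $t=\sqrt{a_0/a_1}=\sqrt{(2\alpha^2+1)/(4\alpha)}\in(0,1)$, which rewrites the denominator as $a_1(1-t)^2$ and frequently makes the sign of $\psi'$ transparent with far less computation. Once $\psi$ is shown to be strictly decreasing on $(1-\tfrac1{\sqrt2},\alpha_0)$ and strictly increasing on $(\alpha_0,1)$, the exact value $V_2=\psi(\alpha_0)=53.1390720$ and the stated extremal polynomial follow immediately.
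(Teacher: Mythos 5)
Your reduction is exactly the paper's: both proofs invoke Theorem~\ref{th2.5}, homogeneity, and Theorem~\ref{th2.6} to restrict to $f_\alpha(\phi)=(\cos\phi+\alpha)^2$ with $\alpha\in\bigl(1-\tfrac{1}{\sqrt2},1\bigr)$, and both arrive at the identical one-variable objective $v(f_\alpha)=\dfrac{2\alpha+\tfrac12}{\bigl(\sqrt{2\alpha}-\sqrt{\alpha^2+\tfrac12}\bigr)^2}$. Where you diverge is the final minimisation: the paper simply runs a golden-section search in Matlab and reads off $53.1390720$ at $\alpha=0.7415574$, whereas you propose an analytic argument --- blow-up at the left endpoint (correct, since $a_1=a_0=2-\sqrt2$ there while the numerator tends to $\tfrac52-\sqrt2>0$), a finite value $\approx 69.6$ at $\alpha=1$, an interior evaluation to force an interior minimiser, and then a critical-point equation obtained by clearing the radical in $D(\alpha)=\alpha^2+2\alpha+\tfrac12-2\sqrt{2\alpha^3+\alpha}$. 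Your route buys something the paper's does not: golden-section search only certifies a global minimum under a unimodality hypothesis the paper never verifies, while establishing a unique admissible critical point together with the endpoint behaviour would genuinely certify that $\psi(\alpha_0)$ is the global minimum. The cost is that you have not actually carried out the radical-clearing algebra or the uniqueness check for the resulting polynomial root --- you correctly flag this as the main obstacle --- so as written your argument is a sound plan at the same level of completeness as the paper's numerical claim rather than a finished rigorous proof. Your numerical spot-checks ($\psi(1)\approx 69.6$, $\psi(\tfrac34)\approx 53.16$, $\psi(0.7415574)\approx 53.139$) are all accurate.
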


\begin{proof}
By Theorem \ref{th2.6}, only the polynomials in the following form need to be considered:
\[
f(\phi) = (\cos \phi + \alpha)^2,
\]
where $\di 1 - \frac{1}{\sqrt{2}} < \alpha < 1$. In this case, 
\[
f(\phi) = \cos^2 \phi + 2\alpha \cos \phi + \alpha^2 = \frac{1}{2}\cos 2\phi + 2\alpha \cos \phi + \alpha^2 + \frac{1}{2},
\]
thus
\[
v(f(\phi)) = \frac{0.5 + 2\alpha}{(\sqrt{2\alpha} - \sqrt{\alpha^2 + 0.5})^2}.
\]
This function is continuous in the interval $\di \alpha \in \left(1 - \frac{1}{\sqrt{2}}, 1\right)$. Using Matlab to conduct a golden section search, the minimum is found to be 53.1390720 (see Figure \ref{fig1}),
\begin{figure}[h]
\centering
\includegraphics[width=9cm]{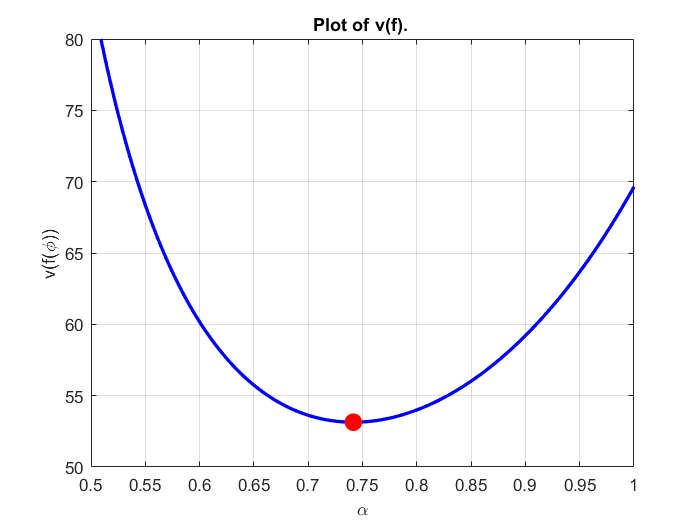}
\caption{Graph of $v(f(\phi))$.}\label{fig1}
\end{figure}
\\\\which can be obtained by considering
\[
f(\phi) = (\cos \phi + 0.7415574)^2.
\]
This completes the proof.
\end{proof}

\subsection{Exact value of $V_3$.}
This subsection will be devoted to finding the exact value of $V_3$. To find the exact value of $V_3$, Theorem \ref{th2.5} and Definition \ref{df2.1} imply that it is only necessary to consider the cosine polynomial with double root. Moreover, the homogeneity of $v(f)$ implies it is enough to consider cosine polynomial with the form
\[
f(\phi) = (\cos \phi + \alpha)^2(\cos \phi + \beta),
\]
where $\alpha \in [-1, 1]$ and $\beta \geq 1$.

Our next goal is to identify all the polynomials in this form that belong to $C_3^0$.

\begin{theorem}\label{2.8}
For $-1 \leq \alpha \leq 1$ and $\beta \geq 1$, the cosine polynomial
\[
f(\phi) = (\cos \phi + \alpha)^2(\cos \phi + \beta)
\]
belongs to $C_3^0$ if and only if
\begin{enumerate}[label=\alph*., ref=\alph*.]
    \item $\di\alpha \geq 1 - \frac{1}{\sqrt{2}}$, or
    \item $\di -\frac{1}{4} < \alpha < 1 - \frac{1}{\sqrt{2}}$ and $\di \beta \in \left[1, 1 + \frac{4\alpha + 1}{4\alpha^2 - 8\alpha + 2}\right)$.
\end{enumerate}
\end{theorem}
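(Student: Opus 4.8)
The plan is to push $f$ into the cosine basis, read off the coefficients $a_0,a_1,a_2,a_3$, and turn each of the three conditions in Definition \ref{df2.1} into an explicit inequality in $\alpha$ and $\beta$. Using $\cos^2\phi=\tfrac12(1+\cos2\phi)$ and $\cos^3\phi=\tfrac14(3\cos\phi+\cos3\phi)$, one finds
\begin{align*}
f(\phi)=a_0+a_1\cos\phi+a_2\cos2\phi+a_3\cos3\phi
\end{align*}
with $a_0=\beta\left(\tfrac12+\alpha^2\right)+\alpha$, $a_1=\tfrac34+\alpha^2+2\alpha\beta$, $a_2=\tfrac{\beta}{2}+\alpha$, and $a_3=\tfrac14$. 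Condition B.1 holds for every admissible pair: $(\cos\phi+\alpha)^2\ge0$ and $\cos\phi+\beta\ge\beta-1\ge0$ give $f\ge0$, and since $\alpha\in[-1,1]$ the equation $\cos\phi=-\alpha$ is solvable, so $f$ vanishes there to even order (at least two) because of the squared factor. Hence only B.2 and B.3 remain to be analysed.

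Next I would dispose of the cheap coefficients. Certainly $a_3>0$. Since $\partial_\beta a_0=\tfrac12+\alpha^2>0$, the quantity $a_0$ is increasing in $\beta$, so for $\beta\ge1$ we get $a_0\ge\alpha^2+\alpha+\tfrac12=\left(\alpha+\tfrac12\right)^2+\tfrac14>0$; thus $a_0>0$ always and the ``$a_0>0$'' half of B.3 is free. Also $a_2=\tfrac{\beta}{2}+\alpha\ge\tfrac12+\alpha$, which will be positive on the relevant $\alpha$-ranges, and once $a_1>a_0$ and $a_0>0$ are known one immediately gets $a_1>0$, so ``$a_1\ge0$'' in B.2 is free as well. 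Everything therefore reduces to the single inequality $a_1>a_0$, i.e.
\begin{align*}
\alpha^2-\alpha+\tfrac34>\beta\left(\alpha^2-2\alpha+\tfrac12\right).
\end{align*}

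I would then split on the sign of $D(\alpha):=\alpha^2-2\alpha+\tfrac12$, whose roots $1\pm\tfrac{1}{\sqrt2}$ are precisely the numbers that already appeared in Theorem \ref{th2.6}; the left-hand side $N(\alpha):=\alpha^2-\alpha+\tfrac34=\left(\alpha-\tfrac12\right)^2+\tfrac12$ is always positive. If $\alpha\ge1-\tfrac{1}{\sqrt2}$ then $D(\alpha)\le0$, so $\beta D(\alpha)\le0<N(\alpha)$ for every $\beta\ge1$, the inequality $a_1>a_0$ holds unconditionally, and $a_2>0$ since here $\alpha>0$: this gives alternative (a), with no restriction on $\beta$. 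If $\alpha<1-\tfrac{1}{\sqrt2}$ then $D(\alpha)>0$ and $a_1>a_0$ is equivalent to $\beta<N(\alpha)/D(\alpha)$; the identity $N/D=1+(N-D)/D=1+\dfrac{4\alpha+1}{4\alpha^2-8\alpha+2}$ (using $4D=4\alpha^2-8\alpha+2$ and $N-D=\alpha+\tfrac14$) rewrites the threshold in the stated form. On this range $4\alpha^2-8\alpha+2=4D(\alpha)>0$, so the threshold exceeds $1$ exactly when $4\alpha+1>0$; hence for $\alpha\le-\tfrac14$ no $\beta\ge1$ can satisfy $a_1>a_0$ and $f\notin C_3^0$, while for $-\tfrac14<\alpha<1-\tfrac{1}{\sqrt2}$ the admissible set is $\beta\in\left[1,\,1+\tfrac{4\alpha+1}{4\alpha^2-8\alpha+2}\right)$ --- alternative (b). Since $[-1,1]$ is covered by $\alpha\ge1-\tfrac1{\sqrt2}$, by $-\tfrac14<\alpha<1-\tfrac1{\sqrt2}$, and by $\alpha\le-\tfrac14$, this is exhaustive.

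The part needing the most care is the bookkeeping that $a_1>a_0$ really is the binding constraint --- i.e.\ confirming that $a_2\ge0$ and $a_0>0$ throughout each sub-range so no hidden extra restriction on $\beta$ creeps in --- together with the two boundary values $\alpha=1-\tfrac1{\sqrt2}$ (where $D=0$, consistent with (a) imposing no upper bound on $\beta$) and $\alpha=-\tfrac14$ (where the threshold collapses to $1$, consistent with the strict inequality in (b)). These are all one-line checks, so I do not expect a genuine obstacle; the only real risk is an arithmetic slip in the expansion of $f$ or in the identity for $N/D$.
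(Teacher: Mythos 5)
Your proposal is correct and follows essentially the same route as the paper: expand $f$ in the cosine basis (the paper works with $4f$, a harmless normalization), observe that nonnegativity and the double root are automatic, reduce everything to $a_1>a_0$, and split on the sign of $\alpha^2-2\alpha+\tfrac12$ to obtain the threshold $1+\frac{4\alpha+1}{4\alpha^2-8\alpha+2}$ on $\beta$. Your explicit verification that $a_0>0$ and $a_2\geq 0$ impose no further restriction matches the closing remark of the paper's proof.
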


\begin{proof}
To check that a cosine polynomial belongs to $C_3^0$, we need to check all the conditions stated in Definition \ref{df2.1}. It is obvious that for every $\alpha \in [-1, 1]$ and $\beta \geq 1$, the polynomial $f(\phi)$ is nonnegative and has a double root. Next, expanding $f(\phi)$, we obtain
\[
f(\phi) = \cos 3\phi + (2\beta + 4\alpha) \cos 2\phi + (4\alpha^2 + 8\alpha\beta + 3) \cos \phi + (4\alpha^2\beta + 4\alpha + 2\beta).
\]

To ensure $f(\phi)$ satisfies $a_1 > a_0$, consider the inequality:
\[
4\alpha^2 + 8\alpha\beta + 3 > 4\alpha^2\beta + 4\alpha + 2\beta.
\]
Rearranging, one get
\[
4\alpha^2 - 4\alpha + 3 > \beta(4\alpha^2 - 8\alpha + 2).
\]
It is obvious that $4\alpha^2 - 4\alpha + 3 = (2\alpha - 1)^2 + 2$ is positive. When $\di\alpha \geq 1 - \frac{1}{\sqrt{2}}$, we have $\di 4\alpha^2 - 8\alpha + 2 \leq 0$, thus the inequality holds. When $\di\alpha < 1 - \frac{1}{\sqrt{2}}$, one needs to have
\[
\beta < \frac{4\alpha^2 - 4\alpha + 3}{4\alpha^2 - 8\alpha + 2} = 1 + \frac{4\alpha + 1}{4\alpha^2 - 8\alpha + 2}.
\]

Thus, when $\di\alpha \leq -\frac{1}{4}$, one has $\di\beta < 1$, which contradicts the assumption. When $\di -\frac{1}{4} < \alpha < 1 - \frac{1}{\sqrt{2}}$, one has
\[
\beta \in \left[1, 1 + \frac{4\alpha + 1}{4\alpha^2 - 8\alpha + 2}\right).
\]

One can also check that when $\di\alpha > -\frac{1}{4}$, we have $a_2 = 4\alpha + 2\beta > 0$ and $a_0 = 4\alpha^2\beta + 4\alpha + 2\beta > 0$. This shows the polynomial $f(\phi)$ is in $C_3^0$. This completes the proof.
\end{proof}

Next, it will be demonstrated that in order to determine the exact value of $V_3$, it suffices to consider degree 3 polynomials of the form
\[
f(\phi) = (\cos \phi + \alpha)^2(\cos \phi + 1),
\]
where $\di\alpha \in \left(-\frac{1}{4}, 1\right]$.
The same strategy as used in the proof of Theorem \ref{th2.2} will be applied here. Given a cosine polynomial $f(\phi)$ in the form
\[
f(\phi) = (\cos \phi + \alpha)^2(\cos \phi + \beta),
\]
where $\di\alpha \in \left(-\frac{1}{4}, 1\right]$ and $\beta > 1$, another polynomial
\[
\bar{f}(\phi) = (\cos \phi + \alpha)^2(\cos \phi + 1)
\]
 can be constructed and yields a better result, implying that $v(f)>v(\bar{f})$. Consequently, $f$ is not a candidate for the infimum. 

To establish this result, we first require some preliminary lemmas. Three of the following lemmas provide simple inequalities that will be used to compare the values of $v(f)$ and $v(\bar{f})$.  Readers may choose to skip ahead to Theorem \ref{th2.12} and return to these lemmas when the inequalities are needed. For a smoother reading experience, the proofs of Lemma \ref{th2.9}, Lemma \ref{th2.10} and Lemma \ref{th2.11} are presented in the \ref{Appendix A}.
\begin{lemma}\label{th2.9}
Given four positive real numbers $a, b, \Delta a, \Delta b$, if
\[
\frac{\Delta a}{\Delta b} > \frac{a}{b},
\]
then
\[
\frac{a + \Delta a}{b + \Delta b} > \frac{a}{b}.
\]
\end{lemma}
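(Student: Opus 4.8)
The plan is to clear all denominators and reduce the claimed inequality to the hypothesis itself. First I would rewrite the hypothesis $\frac{\Delta a}{\Delta b} > \frac{a}{b}$: since $b > 0$ and $\Delta b > 0$, multiplying both sides by $b\,\Delta b > 0$ preserves the strict inequality and yields the equivalent cross-multiplied form
\[
b\,\Delta a > a\,\Delta b.
\]

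Next I would analyze the target inequality $\frac{a+\Delta a}{b+\Delta b} > \frac{a}{b}$ in the same way. Here $b + \Delta b > 0$ because it is a sum of two positive numbers, and $b > 0$, so multiplying through by $b(b+\Delta b) > 0$ shows that the target is equivalent to
\[
b(a+\Delta a) > a(b+\Delta b),
\]
which simplifies to $ab + b\,\Delta a > ab + a\,\Delta b$, i.e. $b\,\Delta a > a\,\Delta b$. This is exactly the cross-multiplied hypothesis obtained in the first step, so the target inequality holds.

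The only point requiring care is the bookkeeping of inequality directions: every multiplication step is by a strictly positive quantity ($b$, $\Delta b$, $b+\Delta b$, all positive by hypothesis or by positivity of sums), so strictness is preserved at each stage and no case distinction is needed. I do not expect a genuine obstacle; this is the classical fact that the mediant of two positive fractions lies strictly between them, and the proof is a short computation once the denominators are cleared.
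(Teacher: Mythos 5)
Your proposal is correct, and it is essentially the same argument as the paper's: the paper sets $k = a/b$, uses the hypothesis in the form $\Delta a > k\,\Delta b$, and bounds $\frac{a+\Delta a}{b+\Delta b} > \frac{kb + k\Delta b}{b+\Delta b} = k$, which is just your cross-multiplication rearranged. Both proofs reduce to the inequality $b\,\Delta a > a\,\Delta b$ together with positivity of the denominators, so the difference is purely cosmetic.
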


\begin{lemma} \label{th2.10}
Given five positive real numbers $a_1, a_2, b_1, b_2, k$, if
\[
\frac{a_1}{b_1} > k \quad \text{and} \quad \frac{a_2}{b_2} > k,
\]
then
\[
\frac{a_1 + a_2}{b_1 + b_2} > k.
\]
\end{lemma}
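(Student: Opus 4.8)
The plan is simply to clear denominators and recombine, using positivity at every step. First I would rewrite each hypothesis in product form: since $b_1 > 0$, the inequality $\frac{a_1}{b_1} > k$ is equivalent to $a_1 > k b_1$, and since $b_2 > 0$, the inequality $\frac{a_2}{b_2} > k$ is equivalent to $a_2 > k b_2$. Adding these two strict inequalities termwise yields
\[
a_1 + a_2 > k b_1 + k b_2 = k (b_1 + b_2).
\]
Finally, because $b_1 + b_2 > 0$ (both summands are positive), dividing through by $b_1 + b_2$ preserves the inequality and gives
\[
\frac{a_1 + a_2}{b_1 + b_2} > k,
\]
which is exactly the claimed conclusion.

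I do not expect any genuine obstacle here: the only point that needs (minor) care is that each manipulation — multiplying a strict inequality by a positive number, adding two strict inequalities, and dividing by a positive number — is legitimate precisely because all five quantities $a_1, a_2, b_1, b_2, k$ are assumed positive, which is the entire content of the hypothesis. An equivalent way to see the result is the elementary ``mediant'' observation that $\frac{a_1 + a_2}{b_1 + b_2}$ always lies strictly between $\frac{a_1}{b_1}$ and $\frac{a_2}{b_2}$ when the $b_i$ are positive, so it certainly exceeds $k$ if both fractions do; but the direct clearing-of-denominators argument above is the shortest and is what I would write out in the appendix.
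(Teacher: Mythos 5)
Your argument is correct: rewriting each hypothesis as $a_i > k b_i$ (legitimate since $b_i>0$), adding, and dividing by $b_1+b_2>0$ gives the conclusion with no gaps. The paper, however, proves this lemma differently: it treats it as a corollary of Lemma \ref{th2.9} (the mediant-type inequality), assuming without loss of generality that $\frac{a_1}{b_1}\geq\frac{a_2}{b_2}$ and then concluding $\frac{a_1+a_2}{b_1+b_2}\geq\frac{a_2}{b_2}>k$ --- essentially the ``mediant lies between the two fractions'' observation that you mention as an aside. Your direct clearing-of-denominators argument is more elementary and self-contained (it does not invoke the previous lemma or a case split), and it also sidesteps a small wrinkle in the paper's route, namely that Lemma \ref{th2.9} is stated with a strict hypothesis $\frac{\Delta a}{\Delta b}>\frac{a}{b}$ while the WLOG reduction only guarantees $\geq$; your version needs no such care. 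The paper's approach buys a slightly shorter write-up given that Lemma \ref{th2.9} is already established and is needed elsewhere, but either proof is fully adequate.
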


\begin{lemma}\label{th2.11}
Given four positive real numbers $a, b, \Delta a, \Delta b$, then
\[
\sqrt{(a + \Delta a)(b + \Delta b)} \geq \sqrt{ab} + \sqrt{\Delta a \Delta b},
\]
equality holds when $a \Delta b = b \Delta a.$
\end{lemma}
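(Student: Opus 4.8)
The plan is to reduce the claimed inequality to the arithmetic--geometric mean inequality applied to a suitable pair of nonnegative numbers. Since every quantity in
\[
\sqrt{(a+\Delta a)(b+\Delta b)} \;\ge\; \sqrt{ab}+\sqrt{\Delta a\,\Delta b}
\]
is positive, both sides are nonnegative and the inequality is equivalent to the one obtained by squaring. Expanding the left square gives $ab + a\,\Delta b + b\,\Delta a + \Delta a\,\Delta b$, while the right square is $ab + 2\sqrt{ab\,\Delta a\,\Delta b} + \Delta a\,\Delta b$. Cancelling the common terms $ab$ and $\Delta a\,\Delta b$, the statement collapses to
\[
a\,\Delta b + b\,\Delta a \;\ge\; 2\sqrt{(a\,\Delta b)(b\,\Delta a)},
\]
which is exactly AM--GM for the two nonnegative numbers $a\,\Delta b$ and $b\,\Delta a$.

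So the steps, in order, are: first record that squaring is legitimate because both sides are nonnegative; then perform the two short expansions above and note the cancellation; finally invoke $x+y\ge 2\sqrt{xy}$ for $x,y\ge 0$ with $x=a\,\Delta b$ and $y=b\,\Delta a$. For the equality clause, the AM--GM step is an equality precisely when $x=y$, that is $a\,\Delta b = b\,\Delta a$; tracing this back through the reversible squaring step shows it is exactly the equality condition for the original inequality, matching the statement.

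As a remark I would note the cleaner one-line alternative: the inequality is just the Cauchy--Schwarz inequality for the vectors $(\sqrt a,\sqrt{\Delta a})$ and $(\sqrt b,\sqrt{\Delta b})$ in $\mathbb{R}^2$, whose dot product $\sqrt{ab}+\sqrt{\Delta a\,\Delta b}$ is at most the product of norms $\sqrt{(a+\Delta a)(b+\Delta b)}$, with equality iff the vectors are proportional, i.e. $\sqrt a/\sqrt b = \sqrt{\Delta a}/\sqrt{\Delta b}$, which on cross-multiplying is again $a\,\Delta b = b\,\Delta a$.

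There is no genuine obstacle here: the only points deserving a moment of care are the justification that squaring preserves the inequality (both sides nonnegative) and that the equality condition survives being pulled back through each step, both of which are immediate. Once established, this lemma, together with Lemma \ref{th2.9} and Lemma \ref{th2.10}, supplies the elementary comparison inequalities needed to show $v(f)>v(\bar f)$ in the degree-$3$ argument leading up to Theorem \ref{th2.12}.
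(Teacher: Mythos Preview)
Your proof is correct and is essentially identical to the paper's own argument: square both sides (legitimate since both are nonnegative), cancel $ab$ and $\Delta a\,\Delta b$, and apply AM--GM to $a\,\Delta b$ and $b\,\Delta a$, with equality iff $a\,\Delta b=b\,\Delta a$. The Cauchy--Schwarz remark is a nice addition but not in the paper.
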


The following theorem compares the value of $v(f)$ and $v(\bar{f})$, and implies that we only need to consider the cosine polynomial in the form of  
\[
\bar{f}(\phi) = (\cos \phi + \alpha)^2(\cos \phi + 1)
\]
in order to find the exact value of $V_3$.
\begin{theorem}\label{th2.12}
Given a cosine polynomial in $C_3^0$ of the form
\[
f(\phi) = (\cos \phi + \alpha)^2(\cos \phi + \beta),
\]
let
\[
\bar{f}(\phi) = (\cos \phi + \alpha)^2(\cos \phi + 1).
\]
Then $\bar{f}(\phi) \in C_3^0$ and
\[
v(f(\phi)) > v(\bar{f}(\phi)).
\]
\end{theorem}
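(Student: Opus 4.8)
The plan is to reduce the whole statement to the behaviour of $D(h):=\bigl(\sqrt{a_1(h)}-\sqrt{a_0(h)}\bigr)^2$ and then to split into two cases according to the sign of $4\alpha^2-8\alpha+2$ — the very quantity that governs $C_3^0$-membership in Theorem~\ref{2.8}. For the membership claim I would invoke Theorem~\ref{2.8} with $\beta=1$: since $f\in C_3^0$ forces $\alpha>-\tfrac14$, the fraction $\tfrac{4\alpha+1}{4\alpha^2-8\alpha+2}$ is strictly positive for $-\tfrac14<\alpha<1-\tfrac{1}{\sqrt{2}}$ (both numerator and denominator are positive there), so $1$ lies in the interval of part~(b); and if $\alpha\ge1-\tfrac{1}{\sqrt{2}}$ then part~(a) applies. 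We may assume $\beta>1$, since $\beta=1$ gives $f=\bar f$. Writing $g(\phi)=(\cos\phi+\alpha)^2=(\alpha^2+\tfrac12)+2\alpha\cos\phi+\tfrac12\cos2\phi$ we have $f=\bar f+(\beta-1)g$, hence $a_k(f)=a_k(\bar f)+(\beta-1)a_k(g)$ for $k\le2$ and $a_3(f)=a_3(\bar f)$. With $N(h):=h(0)-a_0(h)=\sum_{k\ge1}a_k(h)$ we have $v(h)=N(h)/D(h)$, both $D(f),D(\bar f)>0$ (because $a_1>a_0$ on $C_3^0$), and $N(f)=N(\bar f)+(\beta-1)\bigl(2\alpha+\tfrac12\bigr)>N(\bar f)>0$ as $\alpha>-\tfrac14$. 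So everything reduces to controlling $D(f)$ against $D(\bar f)$.

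In the case $\alpha\le1-\tfrac{1}{\sqrt{2}}$ the increment $\delta_1:=(\beta-1)a_1(g)=2(\beta-1)\alpha$ to $a_1$ is at most the increment $\delta_0:=(\beta-1)a_0(g)>0$ to $a_0$. If $\alpha\le0$ then $\delta_1\le0<\delta_0$, so $\sqrt{a_1(f)}\le\sqrt{a_1(\bar f)}$ and $\sqrt{a_0(f)}>\sqrt{a_0(\bar f)}$, whence $D(f)<D(\bar f)$. If $0<\alpha\le1-\tfrac{1}{\sqrt{2}}$ then $0<\delta_1\le\delta_0$, and I would get $\sqrt{a_1(f)}-\sqrt{a_0(f)}<\sqrt{a_1(\bar f)}-\sqrt{a_0(\bar f)}$ by rewriting it as $\sqrt{a_1(f)}+\sqrt{a_0(\bar f)}<\sqrt{a_1(\bar f)}+\sqrt{a_0(f)}$ and squaring: the non-radical terms differ by $\delta_0-\delta_1\ge0$, while $a_1(\bar f)a_0(f)-a_1(f)a_0(\bar f)=\delta_0a_1(\bar f)-\delta_1a_0(\bar f)\ge\delta_0\bigl(a_1(\bar f)-a_0(\bar f)\bigr)>0$. (When $\alpha\ge0$ this subcase can alternatively be routed through Theorem~\ref{th2.4} via the intermediate $\tilde f=(\cos\phi+1)\bigl(g+\delta_1\bigr)=\bar f+\delta_1(1+\cos\phi)$, for which $v(\tilde f)\ge v(\bar f)$, together with $f=\tilde f+(\beta-1)\bigl[(\alpha^2+\tfrac12-2\alpha)+\tfrac12\cos2\phi\bigr]$, whose added constant $\alpha^2+\tfrac12-2\alpha\ge0$ raises $N$ and does not raise $\sqrt{a_1}-\sqrt{a_0}$.) In every subcase $D(f)<D(\bar f)$ and $N(f)>N(\bar f)>0$ give $v(f)>v(\bar f)$.

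In the case $\alpha>1-\tfrac{1}{\sqrt{2}}$ (so $1-\tfrac{1}{\sqrt{2}}<\alpha\le1$) one has $a_1(g)=2\alpha>\alpha^2+\tfrac12=a_0(g)>0$, and since $g$ is nonnegative with nonnegative coefficients, $g\in C_2$; also $\delta_0,\delta_1>0$. Lemma~\ref{th2.11} applied to $a_1(f)=a_1(\bar f)+\delta_1$ and $a_0(f)=a_0(\bar f)+\delta_0$ gives $\sqrt{a_1(f)a_0(f)}\ge\sqrt{a_1(\bar f)a_0(\bar f)}+(\beta-1)\sqrt{a_1(g)a_0(g)}$, hence $D(f)\le D(\bar f)+(\beta-1)D(g)$ with $D(g)=\bigl(\sqrt{a_1(g)}-\sqrt{a_0(g)}\bigr)^2>0$. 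Therefore
\[
v(f)=\frac{N(\bar f)+(\beta-1)N(g)}{D(f)}\ge\frac{N(\bar f)+(\beta-1)N(g)}{D(\bar f)+(\beta-1)D(g)},
\]
and by Lemma~\ref{th2.9} (with $a=N(\bar f)$, $b=D(\bar f)$, $\Delta a=(\beta-1)N(g)$, $\Delta b=(\beta-1)D(g)$) the right-hand side exceeds $v(\bar f)=N(\bar f)/D(\bar f)$ as soon as $N(g)/D(g)>N(\bar f)/D(\bar f)$, that is, $v(g)>v(\bar f)$.

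The remaining point — which I expect to be the one genuinely delicate step — is the one-variable inequality $v(g)>v(\bar f)$ for $\alpha\in(1-\tfrac{1}{\sqrt{2}},\,1]$. Here $g=(\cos\phi+\alpha)^2\in C_2$, so Theorem~\ref{th2.7} gives $v(g)\ge V_2=53.1390720$, whereas $\bar f=(\cos\phi+\alpha)^2(\cos\phi+1)$ has $v(\bar f)=\dfrac{\alpha^2+3\alpha+\tfrac32}{\bigl(\sqrt{\alpha^2+2\alpha+\tfrac34}-\sqrt{\alpha^2+\alpha+\tfrac12}\bigr)^2}$, a continuous function on the compact interval; a golden-section search of the kind already used for $V_2$ shows its maximum there lies comfortably below $V_2$, which closes the gap. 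Alternatively one clears the radicals in $v(g)>v(\bar f)$ by cross-multiplying and squaring twice, reducing it to a polynomial inequality in $\alpha$ to be checked on $(1-\tfrac{1}{\sqrt{2}},1]$. Apart from this single comparison, the whole argument is routine bookkeeping with Lemmas~\ref{th2.9}--\ref{th2.11} and the monotonicity of the square root.
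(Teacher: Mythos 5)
Your proposal is correct, and its load-bearing part coincides with the paper's: for $\alpha$ above the threshold $1-\tfrac{1}{\sqrt2}$ the paper also writes the increments to numerator and denominator as $\Delta_1=(8\alpha+2)m$ and $\Delta_2$, bounds $\Delta_2$ from above via Lemma~\ref{th2.11}, and closes with Lemma~\ref{th2.9} after a numerical check. What you add is the observation that the paper's auxiliary quantity $\frac{8\alpha+2}{4\alpha^2+8\alpha+2-2\sqrt{8\alpha(4\alpha^2+2)}}$ is exactly $v(g)$ for $g=(\cos\phi+\alpha)^2\in C_2$, so its lower bound $53.1390720$ is just $V_2$ from Theorem~\ref{th2.7} rather than the output of a fresh golden-section search; this halves the numerical work (only $\max_\alpha v(\bar f)\le 43.56<V_2$ still needs checking) and explains why the paper's minimizer reappears at $\alpha=0.7415574$. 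In the regime $\alpha\le 1-\tfrac{1}{\sqrt2}$ you diverge from the paper: it differentiates $v(f)$ in $\beta$ and analyzes the sign of $\partial v/\partial\beta$, whereas you show directly that the numerator $N$ increases while the denominator $D=(\sqrt{a_1}-\sqrt{a_0})^2$ strictly decreases, using $\delta_1\le\delta_0$ and the cross-product comparison $a_1(\bar f)a_0(f)>a_1(f)a_0(\bar f)$; your version is purely algebraic and avoids the calculus. A further small gain: by assigning the boundary value $\alpha=1-\tfrac{1}{\sqrt2}$ to the monotonicity case, you sidestep the degenerate point where $D(g)=0$ and $v(g)$ is undefined, a point the paper's second case technically includes. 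The membership claim via Theorem~\ref{2.8} with $\beta=1$ is fine. No gaps.
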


\begin{proof}
When $\di\alpha \in \left(-\frac{1}{4}, 1 - \frac{1}{\sqrt{2}}\right]$, it will be shown that $v(f(\phi))$ increases as $\beta$ increases by proving $\di\frac{\partial v(f)}{\partial \beta} > 0$ for $\beta \geq 1$. Expanding $f(\phi)$, we have
\[
f(\phi) = \cos 3\phi + (2\beta + 4\alpha) \cos 2\phi + (4\alpha^2 + 8\alpha\beta + 3) \cos \phi + (4\alpha^2\beta + 4\alpha + 2\beta),
\]
thus
\[
v(f(\phi)) = \frac{4\alpha^2 + 8\alpha\beta + 4\alpha + 2\beta + 4}{\left(\sqrt{4\alpha^2 + 8\alpha\beta + 3} - \sqrt{4\alpha^2\beta + 4\alpha + 2\beta}\right)^2}.
\]
To show $\di\frac{\partial v(f)}{\partial \beta}$ is positive, it is enough to prove
\[
\begin{aligned}
\bar{v} &= \left(\sqrt{4\alpha^2 + 8\alpha\beta + 3} - \sqrt{4\alpha^2\beta + 4\alpha + 2\beta}\right)(8\alpha + 2) \\
& - \left(4\alpha^2 + 8\alpha\beta + 4\alpha + 2\beta + 4\right)\left(\frac{8\alpha}{\sqrt{4\alpha^2 + 8\alpha\beta + 3}} - \frac{4\alpha^2 + 2}{\sqrt{4\alpha^2\beta + 4\alpha + 2\beta}}\right) > 0.
\end{aligned}
\]

When $\di\alpha \in \left(-\frac{1}{4}, 1 - \frac{1}{\sqrt{2}}\right]$, we have $4\alpha^2 + 2 \geq 8\alpha$, and thus
\[
\frac{8\alpha}{\sqrt{4\alpha^2 + 8\alpha\beta + 3}} \leq \frac{4\alpha^2 + 2}{\sqrt{4\alpha^2 + 8\alpha\beta + 3}} \leq \frac{4\alpha^2 + 2}{\sqrt{4\alpha^2\beta + 4\alpha + 2\beta}}.
\]

This implies
\[
\frac{8\alpha}{\sqrt{4\alpha^2 + 8\alpha\beta + 3}} - \frac{4\alpha^2 + 2}{\sqrt{4\alpha^2\beta + 4\alpha + 2\beta}} \leq 0,
\]
and thus $\bar{v} > 0$ since we know that $\sqrt{a_1} - \sqrt{a_0} > 0$, $8\alpha + 2 > 0$, and $a_3 + a_2 + a_1 > 0$.

For the case where $\di\alpha \geq 1 - \frac{1}{\sqrt{2}}$, we want to prove that
\[
v(f(\phi)) > v(\bar{f}(\phi)).
\]

If we write $f(\phi) = (\cos \phi + \alpha)^2(\cos \phi + 1 + m)$, where $m > 0$, then the statement we want to prove turns to
\[
\begin{aligned}
& \frac{4\alpha^2 + 12\alpha + 6 + m(8\alpha + 2)}{\left(\sqrt{4\alpha^2 + 8\alpha + 3 + 8\alpha m} - \sqrt{4\alpha^2 + 4\alpha + 2 + 2m + 4\alpha^2 m}\right)^2} \\
> & \frac{4\alpha^2 + 12\alpha + 6}{\left(\sqrt{4\alpha^2 + 8\alpha + 3} - \sqrt{4\alpha^2 + 4\alpha + 2}\right)^2}.
\end{aligned}
\]

Now, let
\[
\begin{aligned}
& \Delta_1 = (8\alpha + 2)m, \\
& \Delta_2 = \left(\sqrt{a_1 + 8\alpha m} - \sqrt{a_0 + (4\alpha^2 + 2)m}\right)^2 - \left(\sqrt{a_1} - \sqrt{a_0}\right)^2,
\end{aligned}
\]
where $a_1 = 4\alpha^2 + 8\alpha + 3$ and $a_0 = 4\alpha^2 + 4\alpha + 2$. By Lemma \ref{th2.11}, one can show that
\[
\begin{aligned}
\Delta_2 & = \left(4\alpha^2 + 8\alpha + 2\right)m - 2\sqrt{\left(a_1 + 8\alpha m\right)\left(a_0 + \left(4\alpha^2 + 2\right)m\right)} + 2\sqrt{a_1 a_0} \\
& < \left(4\alpha^2 + 8\alpha + 2\right)m - 2\sqrt{a_1 a_0} - 2m\sqrt{8\alpha\left(4\alpha^2 + 2\right)} + 2\sqrt{a_1 a_0} \\
& = m\left[4\alpha^2 + 8\alpha + 2 - 2\sqrt{8\alpha\left(4\alpha^2 + 2\right)}\right].
\end{aligned}
\]

Using Matlab to conduct a golden section search (see Figure \ref{fig2}), one can check that for $\di\alpha \in \left[1 - \frac{1}{\sqrt{2}}, 1\right]$,
\[
\frac{\Delta_1}{\Delta_2} > \frac{8\alpha + 2}{4\alpha^2 + 8\alpha + 2 - 2\sqrt{8\alpha\left(4\alpha^2 + 2\right)}} \geq 53.1390720.
\]
\begin{figure}[h]
\centering
\includegraphics[width=9cm]{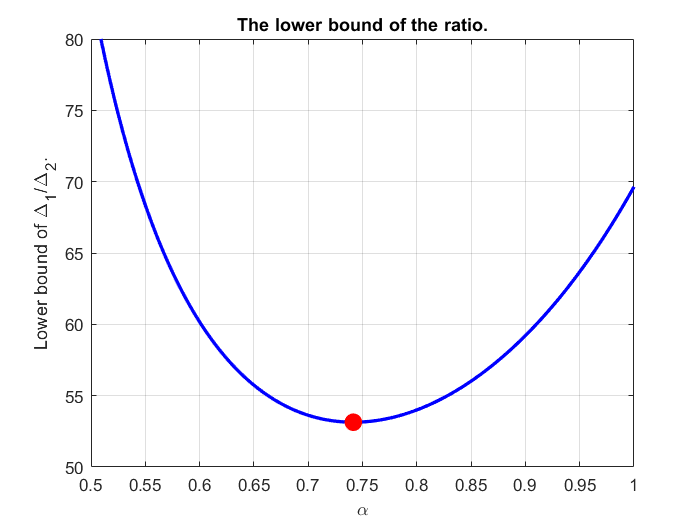}
\caption{The lower bound of $\frac{\Delta_1}{\Delta_2}$.}\label{fig2}
\end{figure}
The minimum point can be obtained when $\alpha = 0.7415574$.

Moreover, one can also use the same method (see Figure \ref{fig3}) to check 
\[
v(\bar{f}(\phi)) = \frac{4\alpha^2 + 12\alpha + 6}{\left(\sqrt{4\alpha^2 + 8\alpha + 3} - \sqrt{4\alpha^2 + 4\alpha + 2}\right)^2} \leq 22 + \frac{44\sqrt{6}}{5} = 43.5555097.
\]
\begin{figure}[h]
\centering
\includegraphics[width=9cm]{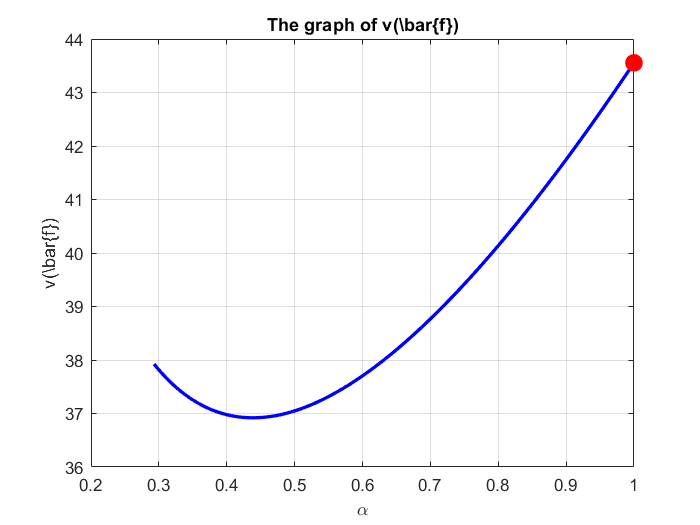}
\caption{Graph of $v(\bar{f} (\phi))$.}\label{fig3}
\end{figure}
The maximum point can be obtained when $\alpha = 1$. Thus, by Lemma \ref{th2.9}, we have
\[
v(f(\phi)) = \frac{4\alpha^2 + 12\alpha + 6 + \Delta_1}{\left(\sqrt{4\alpha^2 + 8\alpha + 3} - \sqrt{4\alpha^2 + 4\alpha + 2}\right)^2 + \Delta_2} > v(\bar{f}(\phi)).
\]

This completes the proof.
\end{proof}

Finally, the last theorem of this section presents the exact value of $V_3$ and identifies the cosine polynomial $f(\phi)\in C_3$ such that $v(f)=V_3$. 

\begin{theorem}
The exact value of $V_3$ is
\[
V_3 = 36.9199911,
\]
which can be obtained by considering the cosine polynomial
\[
f(\phi) = 4(\cos \phi + 1)(\cos \phi + 0.4384345)^2.
\]
\end{theorem}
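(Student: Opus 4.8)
The plan is to use the structural reductions already in hand to collapse the problem to the minimisation of one explicit function of a single real parameter, and then to carry out that minimisation rigorously.

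\textbf{Step 1 (reduction to a one-parameter family).} By Theorem~\ref{th2.5} and Definition~\ref{df2.1}, the infimum defining $V_3$ need only be taken over $C_3^0$, and by homogeneity of $v$ these polynomials may be written as $(\cos\phi+\alpha)^2(\cos\phi+\beta)$ with $\alpha\in[-1,1]$ and $\beta\ge 1$. Theorem~\ref{th2.12} shows that replacing $\beta$ by $1$ never increases $v$, so it suffices to study
\[
\bar f(\phi)=(\cos\phi+\alpha)^2(\cos\phi+1).
\]
Applying Theorem~\ref{2.8} with $\beta=1$ (note $\beta=1$ always lies in the admissible $\beta$-interval) pins down the valid range as $\alpha\in\left(-\tfrac14,1\right]$.

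\textbf{Step 2 (explicit formula).} From the expansion in Theorem~\ref{2.8} with $\beta=1$ — equivalently, after the harmless rescaling by $4$ that normalises $a_3=1$ and produces the stated form $4(\cos\phi+1)(\cos\phi+\alpha)^2$ — one reads off $a_3=1$, $a_2=4\alpha+2$, $a_1=4\alpha^2+8\alpha+3$, $a_0=4\alpha^2+4\alpha+2$, hence
\[
v(\bar f(\phi))=\frac{4\alpha^2+12\alpha+6}{\bigl(\sqrt{4\alpha^2+8\alpha+3}-\sqrt{4\alpha^2+4\alpha+2}\bigr)^2},\qquad \alpha\in\left(-\tfrac14,1\right],
\]
a continuous function of one variable on a half-open interval.

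\textbf{Step 3 (minimisation).} It remains to locate the global minimum. First I would dispose of the boundary: as $\alpha\downarrow-\tfrac14$ the denominator stays bounded away from $0$ while the numerator is positive, and at $\alpha=1$ the value is $22+\tfrac{44\sqrt6}{5}\approx 43.56$ (already computed in the proof of Theorem~\ref{th2.12}), so the minimiser is interior. Then I would differentiate, clear the square roots, and show the numerator of $v'(\bar f)$ changes sign exactly once on $\left(-\tfrac14,1\right)$, so that $v(\bar f)$ is unimodal; the unique critical point, located by a golden-section search in Matlab, is $\alpha\approx 0.4384345$, giving $v\approx 36.9199911$. Feeding this $\alpha$ back into $4(\cos\phi+1)(\cos\phi+\alpha)^2$ exhibits the extremal polynomial.

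\textbf{Main obstacle.} The delicate part is Step 3: upgrading the numerical golden-section search to a genuine proof. One must certify unimodality — i.e. that the (algebraic) numerator of the derivative, involving $\alpha$ together with the two nested square roots, has exactly one zero in the interval — and must control the rounding so that all seven displayed digits of $V_3$ are rigorously justified rather than merely observed. The reduction steps and the algebra of Step 2 are routine once the earlier theorems are invoked.
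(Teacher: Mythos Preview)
Your approach mirrors the paper's almost exactly: invoke Theorems~\ref{th2.5}, \ref{2.8}, and \ref{th2.12} to reduce to the one-parameter family $4(\cos\phi+1)(\cos\phi+\alpha)^2$ on $\alpha\in(-\tfrac14,1]$, write down the same explicit formula for $v$, and minimise via a golden-section search in Matlab. The paper does not attempt the unimodality certification you flag in Step~3; it simply relies on the numerical search, so your proposal is in fact slightly more ambitious about rigor than the original.

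One small slip: in your boundary analysis at $\alpha\downarrow-\tfrac14$, the denominator does \emph{not} stay bounded away from $0$. Since $a_1-a_0=4\alpha+1\to 0$, one has $\bigl(\sqrt{a_1}-\sqrt{a_0}\bigr)^2\to 0$ and hence $v\to+\infty$. The conclusion that the minimiser is interior still holds, but for the opposite reason from the one you state.
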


\begin{proof}
Theorem \ref{2.8} implies that we only need to consider the cosine polynomial in the form
\[
f(\phi) = 4(\cos \phi + \alpha)^2(\cos \phi + \beta)
\]
where $\alpha, \beta$ satisfy one of the following:
\begin{enumerate}[label=\alph*., ref=\alph*.]
    \item $\di \alpha \in \left[1 - \frac{1}{\sqrt{2}}, 1\right]$, or
    \item $\di -\frac{1}{4} < \alpha < 1 - \frac{1}{\sqrt{2}}$ and $\beta \in \left[1, 1 + \frac{4\alpha + 1}{4\alpha^2 - 8\alpha + 2}\right)$.
\end{enumerate}
Moreover, Theorem \ref{th2.12} implies that we need only consider the case where $\beta = 1$. As a result, the polynomial takes the form
\[
f(\phi) = 4(\cos \phi + \alpha)^2(\cos \phi + 1),
\]
where $\di\alpha \in \left(-\frac{1}{4}, 1\right]$. Expanding $f(\phi)$, we obtain
\[
f(\phi) = \cos 3\phi + (4\alpha + 2) \cos 2\phi + (4\alpha^2 + 8\alpha + 3) \cos \phi + (4\alpha^2 + 4\alpha + 2).
\]
In this case, we have
\[
v(f(\phi)) = \frac{4\alpha^2 + 12\alpha + 6}{\left(\sqrt{4\alpha^2 + 8\alpha + 3} - \sqrt{4\alpha^2 + 4\alpha + 2}\right)^2}.
\]

One can check that this function is continuous in the interval $\di \left(-\frac{1}{4}, 1\right]$. Using Matlab to conduct a golden section search (see Figure \ref{fig4}), the minimum is 36.9199911, which can be obtained by considering the cosine polynomial
\[
f(\phi) = 4(\cos \phi + 1)(\cos \phi + 0.4384345)^2.
\]
\begin{figure}[h]
\centering
\includegraphics[width=9cm]{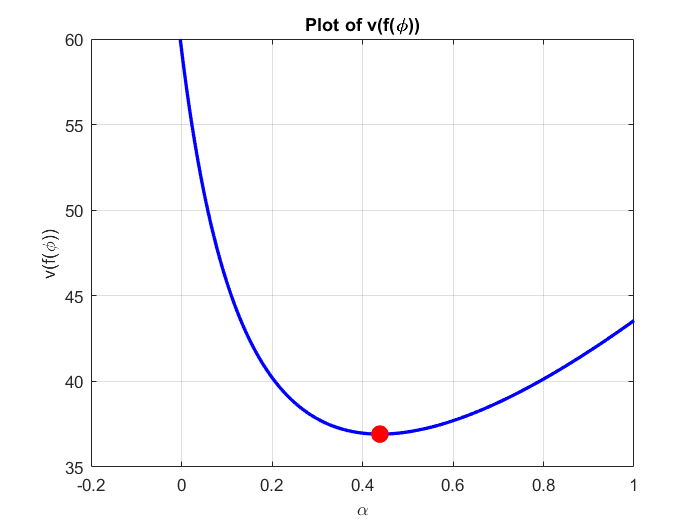}
\caption{Graph of $v(f(\phi))$.}\label{fig4}
\end{figure}
This completes the proof.
\end{proof}

\vfil\pagebreak
\section{Methodology} 
The Fejer-Riesz theorem was first employed by Kondrat'ev \cite{ko77} to find the upper bound of $V_8$ in his paper. This method was later employed by Mossinghoff and Trudgian \cite{mo14} to determine the upper bound of $V_n$ for $n = 8, 12, 16, \ldots, 40$. The Fejer-Riesz theorem translates the problem of finding the exact value of $V_n$ into a minimization problem on $\mathbb{R}^n$, subject to certain conditions. The statement of the Fejer-Riesz theorem is presented as follows:

\begin{theorem}[Fejer-Riesz theorem]\cite{fe16}\label{th3.1}
Let $\displaystyle g(\phi) = \sum_{k=0}^n b_k \cos k \phi$ be a trigonometric polynomial with real coefficients, which is nonnegative for every real $\phi$ and $b_n \neq 0$. Then, there exist real numbers $\alpha_0, \alpha_1, \alpha_2, \ldots, \alpha_n$ such that
\[
g(\phi) = \left| \sum_{k=0}^n \alpha_k \exp{(i k \phi)} \right|^2.
\]

Moreover, we also have
\[
b_0 = \sum_{k=0}^n \alpha_k^2 \quad \text{and} \quad b_j = \sum_{k=0}^{n-j} \alpha_k \alpha_{k+j}.
\]
\end{theorem}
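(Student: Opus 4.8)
The plan is to reduce the statement to a factorization of an associated ordinary polynomial, splitting its roots according to whether they lie inside, on, or outside the unit circle, and to use the hypothesis $g \ge 0$ precisely to control the roots on the circle. First I would move from the circle to the complex plane: setting $z = e^{i\phi}$ and using $\cos k\phi = \tfrac12(z^k + z^{-k})$, one finds $z^n g(\phi) = P(z)$ where
\[
P(z) = b_0 z^n + \sum_{k=1}^{n} \frac{b_k}{2}\bigl(z^{n+k} + z^{n-k}\bigr)
\]
is an honest polynomial of degree $2n$, with leading coefficient $b_n/2 \ne 0$ and also $P(0) = b_n/2 \ne 0$. Since the $b_k$ are real, $P$ has real coefficients, so its roots are stable under $\rho \mapsto \bar\rho$; and the identity $z^{2n}P(1/z) = P(z)$ shows they are also stable under $\rho \mapsto 1/\rho$. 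Hence the $2n$ roots of $P$ (counted with multiplicity) split into a conjugation-symmetric multiset $A$ of roots of modulus $<1$, a conjugation-symmetric multiset $B$ of roots of modulus $1$, and the multiset $C = \{\,1/\rho : \rho \in A\,\}$ of roots of modulus $>1$, with $|A| = |C|$.

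Next I would invoke nonnegativity. Since $g$ is a real-analytic function with $g \ge 0$ and $g \not\equiv 0$ (because $b_n \ne 0$), each real zero of $g$ has even order; pushing this through the substitution $z = e^{i\phi}$, which is locally invertible and only introduces the nonvanishing factor $z^{-n}$, shows that every root of $P$ on the unit circle has even multiplicity. So write $B = 2B'$, meaning each root of $B$ is repeated in $B'$ with half its multiplicity; then $B'$ is again conjugation-symmetric and $|A| + |B'| = n$. Put $Q_0(z) = \prod_{\rho \in A \cup B'}(z - \rho)$, a monic polynomial of degree $n$ with real coefficients (the root set being conjugation-symmetric), and let $Q_0^*(z) = z^n Q_0(1/z)$ be its reversed polynomial. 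Regrouping the linear factors of $P$ --- using $z - 1/\rho = -\rho^{-1}(1 - \rho z)$ for $\rho \in A$, and $1 - \rho z = -\rho(z - \bar\rho)$ together with the conjugation symmetry of $B'$ for the doubled factors coming from $B$ --- one obtains
\[
P(z) = \kappa\, Q_0(z)\, Q_0^*(z), \qquad \kappa = \frac{b_n\,(-1)^{|A| - |B'|}}{2\prod_{\rho \in A \cup B'}\rho},
\]
where $\kappa$ is real because the product runs over a conjugation-symmetric set.

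Finally, I would restrict to $|z| = 1$, where $Q_0(1/z) = Q_0(\bar z) = \overline{Q_0(z)}$ by reality of the coefficients, so $Q_0^*(z) = z^n\overline{Q_0(z)}$ and hence $g(\phi) = z^{-n}P(z) = \kappa\,\bigl|Q_0(e^{i\phi})\bigr|^2$. Since $g \ge 0$ everywhere, $g \not\equiv 0$, and $|Q_0|^2$ is positive somewhere (a nonzero polynomial cannot vanish identically on the circle), we must have $\kappa > 0$. Setting $Q(z) = \sqrt{\kappa}\,Q_0(z) = \sum_{k=0}^n \alpha_k z^k$ then yields real numbers $\alpha_0, \dots, \alpha_n$ with $g(\phi) = \bigl|\sum_{k=0}^n \alpha_k e^{ik\phi}\bigr|^2$. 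The remaining identities follow by expanding $\bigl|\sum_k \alpha_k e^{ik\phi}\bigr|^2 = \sum_{j,k}\alpha_j\alpha_k e^{i(j-k)\phi}$ and comparing, term by term, the coefficients of $\cos k\phi$ with those of $g(\phi) = \sum_k b_k \cos k\phi$, giving $b_0 = \sum_k \alpha_k^2$ and $b_j = \sum_{k=0}^{n-j}\alpha_k\alpha_{k+j}$.

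The hard part --- and the only place where $g \ge 0$ genuinely enters --- is twofold: showing that the unit-circle roots of $P$ occur with even multiplicity, so that they may be divided evenly between $Q_0$ and $Q_0^*$; and showing that the scalar $\kappa$ is positive, so that $\sqrt{\kappa}$ is real and can be absorbed into genuine real coefficients $\alpha_k$. Everything else is bookkeeping with the reciprocal and conjugation symmetries of $P$. An alternative that bypasses the even-multiplicity lemma is an induction on the degree $n$, removing the roots of $P$ in conjugate/reciprocal groups one batch at a time; the root analysis involved is essentially the same.
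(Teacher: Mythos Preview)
The paper does not actually prove this theorem; it is quoted as a classical result with a citation and then used as a black box. So there is no ``paper's own proof'' to compare against. Your argument---passing to the self-reciprocal polynomial $P(z)=z^n g(\phi)\big|_{z=e^{i\phi}}$, using the reciprocal and conjugation symmetries of its roots, invoking $g\ge 0$ to force even multiplicity of the unimodular roots, and then splitting $P=\kappa\,Q_0 Q_0^{*}$ with $\kappa>0$---is exactly the standard proof of Fej\'er--Riesz and is correct.

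One small slip in the final sentence: when you expand $\bigl|\sum_{k}\alpha_k e^{ik\phi}\bigr|^{2}$ and collect the coefficient of $\cos j\phi$ for $j\ge 1$, you actually obtain $2\sum_{k=0}^{n-j}\alpha_k\alpha_{k+j}$, not $\sum_{k=0}^{n-j}\alpha_k\alpha_{k+j}$. The missing factor of $2$ is already present in the paper's statement of the theorem (and is silently restored in the constraints $H_2,G_1,\ldots$ of Problem~$P_1$), so you have reproduced a typo rather than made an error of substance.
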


Now, given any cosine polynomial in $C_n$, 
\[
f(\phi) = 1 + a \cos \phi + \sum_{k=2}^n a_k \cos k \phi ,
\]
the Fejer-Riesz theorem states that we can find real numbers $x_0, x_1, x_2, \ldots, x_n$ such that
\[
f(\phi) = \left| \sum_{k=0}^n x_k \exp{(i k \phi)} \right|^2,
\]
with the following conditions:
\begin{align*}
&x_0^2 + x_1^2 + x_2^2 + \ldots + x_n^2 = 1, \\
&2(x_0 x_1 + x_1 x_2 + \ldots + x_{n-1} x_n) = a, \\
&2(x_0 x_k + x_1 x_{k-1} +  \ldots +x_{n-k} x_n) \geq 0,
\end{align*}
for $2 \leq k \leq n$. 

By observing that
$$f(0)=(x_0 + x_1 + \ldots + x_n)^2 - 1,$$
the problem of finding the exact value of $\chi_n(a)$ in Equation (\ref{eq1.2}) is equivalent to the following minimization problem, which we call Problem $P_1$.

\begin{problem}\label{p3.1} The optimization Problem $P_1$ is defined as
\[
\begin{aligned}
\text{$P_1$: Minimize } & F(\mathbf{x}) = (x_0 + x_1 + \ldots + x_n)^2 - 1 \\
\text{subject to } & H_1(\mathbf{x}) = x_0^2 + x_1^2 + x_2^2 + \ldots + x_n^2 = 1, \\
& H_2(\mathbf{x}) = 2(x_0 x_1 + x_1 x_2 + \ldots + x_{n-1} x_n) = a, \\
& G_1(\mathbf{x}) = 2(x_0 x_2 + x_1 x_3 + \ldots + x_{n-2} x_n) \geq 0, \\
& G_2(\mathbf{x}) = 2(x_0 x_3 + x_1 x_4 + \ldots + x_{n-3} x_n) \geq 0, \\
& \quad\quad\quad\vdots \\
& G_{n-1}(\mathbf{x}) = 2x_0 x_n \geq 0.
\end{aligned}
\]
\end{problem}
For Problem $P_1$, we denote the subset of $\mathbb{R}^n$ that satisfies all the conditions in the statement of the problem as $S_1$. One can note that if $S_1$ is not empty, then it will be a closed set since
\[
\text{S1} = H_1^{-1}(\{1\}) \cap H_2^{-1}(\{a\}) \cap \bigcap_{i=1}^{n-1} G_i^{-1}([0, \infty)).
\]
Moreover, the condition $H_1(\mathbf{x}) = 1$ implies that $S_1$ is bounded, meaning $S_1$ is a compact set. Since all the functions $F, H_i$, and $G_i$ are continuous, the existence of a global solution is guaranteed by the Weierstrass Extreme Value Theorem.

\begin{theorem}[Weierstrass Extreme Value Theorem]\cite{mu00}\label{th3.2}
Every continuous function on a compact set attains its extreme values on that set.
\end{theorem}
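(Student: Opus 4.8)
The plan is to prove the statement via the sequential characterization of compactness, which in $\mathbb{R}^n$ is furnished by the Bolzano--Weierstrass theorem (and more generally holds in any compact metric space): every sequence in a compact set has a subsequence converging to a point of the set. Let $f$ be a continuous real-valued function on a nonempty compact set $K$. Since applying the argument below to $-f$ recovers the infimum case from the supremum case, it suffices to show that $f$ attains its supremum on $K$.

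First I would establish that $f$ is bounded above on $K$. Suppose not; then for each positive integer $m$ there is a point $x_m \in K$ with $f(x_m) > m$. By compactness, the sequence $(x_m)$ has a subsequence $(x_{m_j})$ converging to some $x^\ast \in K$, and continuity forces $f(x_{m_j}) \to f(x^\ast)$, a finite number, contradicting $f(x_{m_j}) > m_j \to \infty$. Hence $M := \sup_{x \in K} f(x)$ is finite. Next I would show $M$ is attained: by definition of the supremum there is a sequence $(y_m)$ in $K$ with $M - \tfrac{1}{m} < f(y_m) \le M$, so $f(y_m) \to M$; extracting a convergent subsequence $y_{m_j} \to y^\ast \in K$ and using continuity gives $f(y^\ast) = \lim_j f(y_{m_j}) = M$. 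Thus $f$ attains its maximum at $y^\ast$, and the same reasoning applied to $-f$ produces a minimizer.

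I expect no substantive obstacle here: the only genuine content is the passage to a convergent subsequence (where compactness is used) and the interchange of limit with function value (where continuity is used); an alternative, equally short argument proceeds through covering $K$ by open sets on which $f$ is bounded and extracting a finite subcover. The one point requiring care is the ordering of the steps — the degenerate possibility $M = +\infty$ must be excluded before a maximizing sequence can be discussed. In the intended application, $K = S_1$ is compact as already observed, and $F$, $H_i$, $G_i$ are polynomials and hence continuous, so Problem $P_1$ indeed has a global solution.
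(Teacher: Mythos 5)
Your proof is correct. Note, however, that the paper does not prove this statement at all: it is quoted as a classical result with a citation to a standard reference, so there is no argument of the paper's to compare yours against. Your sequential-compactness proof (bound $f$ above by contradiction via a convergent subsequence, then extract a convergent subsequence from a maximizing sequence and pass the limit through $f$) is the standard one, and you correctly flag the one ordering issue, namely that finiteness of $M=\sup_K f$ must be settled before a maximizing sequence is discussed. The only caveat worth recording is that your argument uses sequential compactness, which coincides with compactness in metric spaces but not in arbitrary topological spaces; since the application is to $S_1\subset\mathbb{R}^{n+1}$, where Bolzano--Weierstrass applies, this is harmless here, and your mentioned alternative via finite subcovers would handle the fully general statement.
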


Applying the Weierstrass Extreme Value Theorem to Problem $P_1$ guarantees the existence of a global minimum for $F(\mathbf{x})$ on the set $S_1$.

\subsection{Karush-Kuhn-Tucker Method }
In order to solve Problem $P_1$, two classical approaches in mathematical optimization will be applied, which are Karush-Kuhn-Tucker conditions and penalty function. 
The Karush-Kuhn-Tucker conditions are the first derivative tests for a solution to be optimal in nonlinear programming. Given an optimization problem, we call it Problem $P$.
\begin{problem}\label{p3.2} The optimization Problem $P$ is defined as
$$
\begin{array}{ll}
\text { $P$:Minimize } & f(\mathbf{x}) \\
\text { subject to } & g_i(\mathbf{x}) \leq 0 \text { for } i=1, \ldots, j \\
& h_i(\mathbf{x})=0 \text { for } i=1, \ldots, k \\
\end{array}
$$
where $f, g_1, \ldots, g_j, h_1, \ldots, h_{k}$ are all continuous function from $\mathbb{R}^n$ to $\mathbb{R}$.
\end{problem}
For Problem $P$, we denote the feasible regions as $S$, which is defined as
$$S=\{{\mathbf{x}}=(x_1,x_2,\ldots,x_n) \in\mathbb{R}^n|h_i({\mathbf{x}})=0,g_i({\mathbf{x}})\leq 0 \}.$$
To guarantee the set $S$ is nonempty, the Slater's condition will be checked in advance.
\begin{definition}[Slater's condition] \cite{ba06}
The Problem $P$ is said to satisfy Slater's condition if we can find a vector $\mathbf{x}\in\mathbb{R}^n$ such that
$$ g_1(\mathbf{x})<0, g_2(\mathbf{x})<0, \cdots, g_m(\mathbf{x})<0.$$
\end{definition}
The Karush-Kuhn-Tucker conditions are a set of requirements that must be satisfied by the optimal point of an optimization problem.
\begin{theorem}[Karush-Kuhn-Tucker Necessary Conditions]\cite{ba06}\label{th3.3}
Let $f: R^n \rightarrow R$, and $g_i: R^n \rightarrow R$ for $i=$ $1, \ldots, m$. Consider Problem ${P}$, which seeks to minimize $f({\mathbf{x}})$ subject to $g_i({\mathbf{x}}) \leq 0$ for $i=1, \ldots, j$ and $h_i(\mathbf{x})=0$ for $i=1,\ldots ,k$. Suppose $\overline{\mathbf{x}}$ is a solution to this problem, and that $f,g_i$ and $h_i$  are all differentiable at $\overline{\mathbf{x}}$. Furthermore, assume the gradients of the constraints, $\nabla g_i(\overline{\mathbf{x}})$, are linearly independent for all $i$. If $\overline{\mathbf{x}}$ solves Problem $\mathrm{P}$ locally, there exist scalars $u_i$ for $i \in I$ such that:
$$
\begin{aligned}
 \nabla f(\overline{\mathbf{x}})+\sum_{i=1}^{j}\lambda_i\nabla h_i(\mathbf{x})+\sum_{i=1}^k u_i \nabla g_i(\overline{\mathbf{x}}) & =\mathbf{0} & & \\
u_i g_i(\overline{\mathbf{x}}) & =0 & & \text { for } i=1, \ldots, m \\
 u_i & \geq 0 & & \text { for } i=1, \ldots, m.
\end{aligned}
$$
\end{theorem}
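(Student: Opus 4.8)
The plan is to follow the classical three-step route to the Karush--Kuhn--Tucker conditions: localize first to the active constraints, then establish the nonexistence of a linearized feasible descent direction, and finally convert that nonexistence into the multiplier identity via a theorem of the alternative.

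First I would introduce the active index set $I = \{\, i : g_i(\overline{\mathbf{x}}) = 0 \,\}$ and dispose of the inactive inequality constraints: for each $i$ with $g_i(\overline{\mathbf{x}}) < 0$ set $u_i = 0$, so that complementary slackness $u_i g_i(\overline{\mathbf{x}}) = 0$ holds for them automatically and, by continuity of $g_i$, these constraints play no role in a neighbourhood of $\overline{\mathbf{x}}$. It therefore suffices to produce multipliers $\lambda_1,\dots,\lambda_k \in \mathbb{R}$ and $u_i \ge 0$ for $i \in I$ with $\nabla f(\overline{\mathbf{x}}) + \sum_{i=1}^{k}\lambda_i \nabla h_i(\overline{\mathbf{x}}) + \sum_{i\in I} u_i \nabla g_i(\overline{\mathbf{x}}) = \mathbf{0}$.

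Next I would prove the geometric core: no direction $\mathbf{d}\in\mathbb{R}^n$ can satisfy $\nabla f(\overline{\mathbf{x}})\cdot\mathbf{d} < 0$, $\nabla g_i(\overline{\mathbf{x}})\cdot\mathbf{d} < 0$ for every $i\in I$, and $\nabla h_i(\overline{\mathbf{x}})\cdot\mathbf{d} = 0$ for $i = 1,\dots,k$. Assume some such $\mathbf{d}$ exists. Using the linear independence of the active constraint gradients $\{\nabla g_i(\overline{\mathbf{x}})\}_{i\in I}\cup\{\nabla h_i(\overline{\mathbf{x}})\}_{i=1}^{k}$ --- the constraint qualification in the hypothesis --- the implicit function theorem yields a $C^1$ arc $\mathbf{x}(t)$, defined for small $t\ge 0$, with $\mathbf{x}(0) = \overline{\mathbf{x}}$ and $\mathbf{x}'(0) = \mathbf{d}$, along which $h_i(\mathbf{x}(t)) \equiv 0$ and, by the strict inequalities together with a first-order expansion, $g_i(\mathbf{x}(t)) < 0$ for $i\in I$ and small $t>0$; when there are no equality constraints this arc is just the segment $\overline{\mathbf{x}} + t\mathbf{d}$. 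A first-order Taylor expansion of $f$ then gives $f(\mathbf{x}(t)) < f(\overline{\mathbf{x}})$ for small $t>0$, contradicting the local optimality of $\overline{\mathbf{x}}$.

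Finally I would read off the multipliers from this impossibility. Splitting each equality $\nabla h_i(\overline{\mathbf{x}})\cdot\mathbf{d} = 0$ into the pair $\nabla h_i\cdot\mathbf{d} \le 0$, $-\nabla h_i\cdot\mathbf{d} \le 0$ turns the conclusion of the previous step into the infeasibility of a homogeneous linear system with a strict block (the gradients of $f$ and of the active $g_i$) and an equality/nonstrict block; Motzkin's transposition theorem (equivalently Gordan's theorem combined with Farkas' lemma) then produces nonnegative coefficients $u_0$ and $(u_i)_{i\in I}$, not all zero, and sign-free $\lambda_i$, with $u_0\nabla f(\overline{\mathbf{x}}) + \sum_{i\in I} u_i\nabla g_i(\overline{\mathbf{x}}) + \sum_{i=1}^k \lambda_i \nabla h_i(\overline{\mathbf{x}}) = \mathbf{0}$. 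If $u_0 = 0$ this would be a nontrivial linear dependence among the active constraint gradients, contradicting the constraint qualification; hence $u_0 > 0$, and dividing through rescales the remaining coefficients into the required $u_i \ge 0$ and $\lambda_i$. I expect the main obstacle to be the middle step --- specifically the construction of the feasible arc and the verification that it keeps every equality constraint exactly satisfied and every active inequality strictly negative for small positive $t$; this is the precise point where the linear-independence constraint qualification is indispensable, and it resurfaces when excluding $u_0 = 0$ in the last step.
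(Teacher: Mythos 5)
This theorem is quoted from the reference \cite{ba06} and the paper supplies no proof of its own, so there is nothing internal to compare against; your sketch is the standard textbook argument and is essentially the route taken in the cited source (which derives the Fritz John conditions via a theorem of the alternative and then uses the constraint qualification to force the multiplier on $\nabla f(\overline{\mathbf{x}})$ to be positive). Your three steps are correctly ordered and the two places where linear independence is genuinely needed --- the implicit-function-theorem arc staying on $\{h_i = 0\}$, and the exclusion of $u_0 = 0$ after applying Motzkin's transposition theorem --- are exactly the right ones; the sketch is sound.
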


For the problem $P_1$ (Problem \ref{p3.1}), since we can guarantee the existence of the optimal solution $\overline{\mathbf{x}}$, this means we can find scalar $\lambda_1,\lambda_2, u_1,u_2,\ldots,u_{n-1}$ such that:
$$
\begin{aligned}
 \nabla F(\overline{\mathbf{x}})+\sum_{i=1}^{2}\lambda_i\nabla H_i(\mathbf{x})+\sum_{i=1}^{n-1} u_i \nabla G_i(\overline{\mathbf{x}}) & =\mathbf{0} & & \\
u_i G_i(\overline{\mathbf{x}}) & =0 & & \text { for } i=1, \ldots, n \\
 u_i & \geq 0 & & \text { for } i=1, \ldots, n.
\end{aligned}
$$
\subsection{Penalty Function}

The second tool used is the penalty function, which transforms a constrained optimization problem into an unconstrained one. This is done by incorporating the constraints into the objective function through a penalty parameter that penalizes any violation of the constraints.

Consider Problem $P$ again. We can replace Problem $P$ with the following unconstrained problem.
\begin{problem} The optimization Problem $P_2$ is defined as
\[
\begin{aligned}
\text{Minimize} & \quad f(\mathbf{x}) + \mu \left[\sum_{i=1}^j \left(h_i(\mathbf{x})\right)^2 + \sum_{i=1}^k \max\{g_i(\mathbf{x}), 0\}^2 \right] \\
\text{subject to} & \quad \mu \geq 0.
\end{aligned}
\]
\end{problem}
For $\mu \geq 0$, let us denote 
\[
\alpha(\mathbf{x}) = \sum_{i=1}^j \left(h_i(\mathbf{x})\right)^2 + \sum_{i=1}^k \max\{g_i(\mathbf{x}), 0\}^2,
\]
and
\[
\theta(\mu) = \inf \left\{ f(\mathbf{x}) + \mu \alpha(\mathbf{x}) \right\}.
\]
One can observe that when $\mu$ is a large positive number, any point $\mathbf{x} \in \mathbb{R}^n$ that violates the conditions in Problem $P$ will give $\alpha(\mathbf{x})$ a large value, which causes $f(\mathbf{x}) + \mu \alpha(\mathbf{x}) > \theta(\mu)$.

The following theorem concludes this subsection, which states that solving Problem $P$ is equivalent to finding the value of 
\[
\lim_{\mu \to \infty} \theta(\mu).
\]

\begin{theorem}\cite{ba06}\label{th3.4}
Consider the following problem:
\[
\begin{array}{ll}
\text{Minimize} & f(\mathbf{x}) \\
\text{subject to} & g_i(\mathbf{x}) \leq 0 \text{ for } i=1, \ldots, j \\
& h_i(\mathbf{x}) = 0 \text{ for } i=1, \ldots, k \\
\end{array}
\]
where $f, g_1, \ldots, g_j, h_1, \ldots, h_k$ are continuous functions on $\mathbb{R}^n$. Suppose that the problem has a feasible solution, and let $\alpha$ be a continuous function defined by 
\[
\alpha(\mathbf{x}) = \sum_{i=1}^j \left(h_i(\mathbf{x})\right)^2 + \sum_{i=1}^k \max\{g_i(\mathbf{x}), 0\}^2.
\]
Furthermore, suppose that for each $\mu$ there exists a solution $\mathbf{x}_\mu \in \mathbb{R}^n$ to the problem of minimizing $f(\mathbf{x}) + \mu \alpha(\mathbf{x})$, then 
\[
\inf \{ f(\mathbf{x}) \mid \mathbf{g}(\mathbf{x}) \leq \mathbf{0}, \mathbf{h}(\mathbf{x}) = \mathbf{0}, \mathbf{x} \in X \} = \lim_{\mu \to \infty} \theta(\mu),
\]
where $\theta(\mu) = \inf \{ f(\mathbf{x}) + \mu \alpha(\mathbf{x}) \mid \mathbf{x} \in \mathbb{R}^n \} = f\left(\mathbf{x}_\mu\right) + \mu \alpha\left(\mathbf{x}_\mu\right)$. Furthermore, the limit $\overline{\mathbf{x}}$ of any convergent subsequence of $\left\{\mathbf{x}_\mu\right\}$ is an optimal solution to the original problem, and $\mu \alpha\left(\mathbf{x}_\mu\right) \rightarrow 0$ as $\mu \rightarrow \infty$.
\end{theorem}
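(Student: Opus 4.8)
The plan is to recognise this as the standard convergence theorem for the quadratic penalty method and to prove it by combining a few monotonicity facts in $\mu$ with a subsequential (compactness) argument. Throughout, write $f^{*}=\inf\{f(\mathbf{x}):g_i(\mathbf{x})\le 0,\ h_i(\mathbf{x})=0\}$ for the optimal value of the constrained problem. First I would record the elementary observations that underpin everything: $\alpha(\mathbf{x})\ge 0$ for every $\mathbf{x}\in\mathbb{R}^n$, and $\alpha(\mathbf{x})=0$ precisely when $\mathbf{x}$ is feasible. Consequently, for any feasible $\mathbf{x}$ one has $f(\mathbf{x})+\mu\alpha(\mathbf{x})=f(\mathbf{x})$, so minimality of $\mathbf{x}_\mu$ gives $\theta(\mu)=f(\mathbf{x}_\mu)+\mu\alpha(\mathbf{x}_\mu)\le f(\mathbf{x})$; taking the infimum over feasible $\mathbf{x}$, $\theta(\mu)\le f^{*}$ for every $\mu\ge 0$ (in particular $f^{*}$ is finite, since each $\theta(\mu)$ is attained and hence real).

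Second, I would establish the monotonicity of $\theta$, $f(\mathbf{x}_\mu)$ and $\alpha(\mathbf{x}_\mu)$ in $\mu$. That $\theta$ is nondecreasing is immediate: for $0\le\mu_1<\mu_2$, $\theta(\mu_1)\le f(\mathbf{x}_{\mu_2})+\mu_1\alpha(\mathbf{x}_{\mu_2})\le f(\mathbf{x}_{\mu_2})+\mu_2\alpha(\mathbf{x}_{\mu_2})=\theta(\mu_2)$, using $\alpha\ge 0$. Together with $\theta(\mu)\le f^{*}$ this shows that $L:=\lim_{\mu\to\infty}\theta(\mu)$ exists and $L\le f^{*}$. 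Adding the defining inequality of $\mathbf{x}_{\mu_1}$ (tested at $\mathbf{x}_{\mu_2}$) to that of $\mathbf{x}_{\mu_2}$ (tested at $\mathbf{x}_{\mu_1}$) cancels the $f$-terms and leaves $(\mu_1-\mu_2)\alpha(\mathbf{x}_{\mu_1})\le(\mu_1-\mu_2)\alpha(\mathbf{x}_{\mu_2})$, i.e.\ $\alpha(\mathbf{x}_{\mu_1})\ge\alpha(\mathbf{x}_{\mu_2})$; substituting this back into either inequality gives $f(\mathbf{x}_{\mu_1})\le f(\mathbf{x}_{\mu_2})$. Thus $f(\mathbf{x}_\mu)$ is nondecreasing and bounded above by $\theta(\mu)\le f^{*}$, hence convergent, while $\alpha(\mathbf{x}_\mu)$ is nonincreasing.

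Third, I would identify $L$ with $f^{*}$ and describe the accumulation points of $\{\mathbf{x}_\mu\}$. Let $\mathbf{x}_{\mu_k}\to\bar{\mathbf{x}}$ be any convergent subsequence with $\mu_k\to\infty$. Continuity of $f$ gives $f(\mathbf{x}_{\mu_k})\to f(\bar{\mathbf{x}})$, and $\theta(\mu_k)\to L$, so $\mu_k\alpha(\mathbf{x}_{\mu_k})=\theta(\mu_k)-f(\mathbf{x}_{\mu_k})$ stays bounded; since $\mu_k\to\infty$ and $\alpha\ge 0$, this forces $\alpha(\mathbf{x}_{\mu_k})\to 0$, and continuity of $\alpha$ gives $\alpha(\bar{\mathbf{x}})=0$, i.e.\ $\bar{\mathbf{x}}$ is feasible. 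Then $f(\bar{\mathbf{x}})=\lim_k\bigl(\theta(\mu_k)-\mu_k\alpha(\mathbf{x}_{\mu_k})\bigr)\le L\le f^{*}$, while feasibility forces $f(\bar{\mathbf{x}})\ge f^{*}$; hence $f(\bar{\mathbf{x}})=f^{*}=L$, so $\bar{\mathbf{x}}$ is optimal, and since $\theta$ is monotone its full limit $\lim_{\mu\to\infty}\theta(\mu)$ equals $f^{*}$. Finally, $f(\mathbf{x}_\mu)$ converges and equals $f(\bar{\mathbf{x}})=f^{*}$ along the subsequence, so $f(\mathbf{x}_\mu)\to f^{*}$, whence $\mu\alpha(\mathbf{x}_\mu)=\theta(\mu)-f(\mathbf{x}_\mu)\to 0$.

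I expect the genuine obstacle to be the third step, the only part that goes beyond elementary inequality manipulation: the identification $L=f^{*}$ and the optimality conclusion both hinge on a convergent subsequence of $\{\mathbf{x}_\mu\}$ actually existing, which is not automatic without some coercivity or boundedness, and this is exactly why the statement is phrased in terms of ``any convergent subsequence.'' In the application to Problem $P_1$ the difficulty evaporates: there $F(\mathbf{x})=(x_0+\cdots+x_n)^2-1\ge -1$ while $\alpha$ dominates $(x_0^2+\cdots+x_n^2-1)^2$, so $\mu(x_0^2+\cdots+x_n^2-1)^2\le\mu\alpha(\mathbf{x}_\mu)=\theta(\mu)-F(\mathbf{x}_\mu)\le f^{*}+1$ forces $\|\mathbf{x}_\mu\|^2\to 1$, confining $\{\mathbf{x}_\mu\}$ to a bounded set on which Bolzano--Weierstrass supplies the required subsequence.
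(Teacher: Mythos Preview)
The paper does not prove this theorem at all: it is quoted from \cite{ba06} (the citation appears in the theorem header) and no proof environment follows it in the text. So there is nothing in the paper to compare your argument against.

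On its own merits your argument is correct and is precisely the standard textbook proof of convergence of the exterior quadratic penalty method: the inequality $\theta(\mu)\le f^{*}$, the monotonicity of $\theta(\mu)$, $f(\mathbf{x}_\mu)$ and $\alpha(\mathbf{x}_\mu)$ via the two optimality inequalities, and the subsequential limit argument are exactly how the result is established in the source. Your observation about the missing compactness hypothesis is also apt: note that the set $X$ surfaces in the displayed infimum in the paper's statement without having been introduced, and in the original reference this $X$ is a compact set containing the iterates, which is what guarantees the convergent subsequence you need. Your final paragraph, checking that for Problem $P_1$ the constraint $H_1(\mathbf{x})=1$ forces $\|\mathbf{x}_\mu\|^2\to 1$ and hence boundedness, supplies exactly the verification the paper omits when it invokes the theorem.
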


\subsection{General Procedure}

In this subsection, we describe a general procedure that combines the Karush-Kuhn-Tucker conditions and the penalty function to solve Problem $P$ when the solution exists. As the main problem of interest, Problem $P_1$ (Problem \ref{p3.1}), is guaranteed to have an optimal solution by Theorem \ref{th3.2}.

Let $\overline{\mathbf{x}}$ be the optimal solution of Problem $P$ (Problem \ref{p3.2}). According to the Karush-Kuhn-Tucker conditions, it should satisfy the following three conditions:

\begin{enumerate}
    \item Feasibility. The point $\overline{\mathbf{x}}$ lies in the feasible region, which means $h_i(\overline{\mathbf{x}}) = 0$ for $i=1,2,3,\ldots,j$ and $g_i(\overline{\mathbf{x}}) \leq 0$ for $i=1,2,3,\ldots,k$.
    \item Stationarity. The parameters $\lambda_1, \lambda_2, \ldots, \lambda_j, u_1, u_2, \ldots, u_k$ can be found such that
    \[
    \nabla f(\overline{\mathbf{x}}) + \sum_{i=1}^{j} \lambda_i \nabla h_i(\mathbf{x}) + \sum_{i=1}^k u_i \nabla g_i(\overline{\mathbf{x}}) = \mathbf{0}.
    \]
    \item Complementary Slackness. The parameters $u_1, u_2, \ldots, u_k$ should satisfy
    \begin{align*}
        u_i g_i(\overline{\mathbf{x}}) &= 0, \\
        u_i &\geq 0.
    \end{align*}
\end{enumerate}

Next, we describe the procedure to determine the exact value of $\overline{\mathbf{x}}$. First, by Condition 3, Problem $P$ can be decomposed into several subproblems. Complementary slackness stipulates that for each inequality constraint $g_i(x) \leq 0$, the corresponding Lagrange multiplier $u_i$ must satisfy $u_i g_i(x) = 0$. This means that for any optimal solution, either the constraint is active ($g_i(x) = 0$ and $u_i$ can be nonzero) or inactive ($g_i(x) < 0$ and $u_i = 0$). By enumerating all possible combinations of active and inactive constraints, $2^k$ subproblems can be generated, with each combination treated as a separate case.
For each subproblem, the active constraints are converted into equality constraints, while the inactive constraints are disregarded, with their corresponding Lagrange multipliers set to zero. Each subproblem is then solved individually. By comparing the solutions of all subproblems, the optimal solution that satisfies the original constraints and minimizes the objective function is identified.

In order to solve each subproblem, a penalty function method will be applied. For a subproblem where certain constraints $g_i(x)$ are active and others are inactive, we construct a penalized objective function:
\[
\Phi(x, \mu) = f(x) + \mu \sum_{i=1}^j h_i(x)^2 + \mu \sum_{i \in \mathcal{I}} g_i(x)^2,
\]
where $\mathcal{I} \subset \{1,2,3,\ldots,k\}$ denotes the set of active constraints, and $\mu$ is the penalty parameter. The equality constraints $h_i(x) = 0$ are always included in the penalty term. We then solve the unconstrained minimization problem:
\[
x_{\mu} = \inf_x \Phi(x, \mu)
\]
using a gradient-based optimization method. Let $x^* = \lim_{\mu \to \infty} x_\mu$, then $x^*$ will be the solution of the subproblem.

Next, we want to verify that the solution $x^*$ satisfies the stationarity condition (Condition 2) under some assumptions. By the optimality of $x_\mu$, for every $\mu \geq 0$,
\[
\nabla f(x_\mu) + \mu \sum_{i=1}^{j} 2h_i(x_\mu) \nabla h_i(x_\mu) + \mu \sum_{i \in \mathcal{I}} 2g_i(x_\mu) \nabla g_i(x_\mu) = \mathbf{0}.
\]
By letting $\mu \to \infty$,
\[
\nabla f(x^*) + \sum_{i=1}^{j} \left(\lim_{\mu \to \infty} 2\mu h_i(x_\mu)\right) \nabla h_i(x_\mu) + \sum_{i \in \mathcal{I}} \left(\lim_{\mu \to \infty} 2\mu g_i(x_\mu)\right) \nabla g_i(x_\mu) = \mathbf{0},
\]
thus if the limits in the equation exist, we can choose
\begin{align*}
    \lambda_i &= \lim_{\mu \to \infty} 2\mu h_i(x_\mu) & \text{(for $i=1,2,3,\ldots,j$)}, \\
    u_i &= \lim_{\mu \to \infty} 2\mu g_i(x_\mu) & \text{(for $i \in \mathcal{I}$)}, \\
    u_i &= 0 & \text{(for $i \notin \mathcal{I}$)},
\end{align*}
in this case, the solution $x^*$ satisfies the stationarity condition.

After obtaining solutions for each subproblem using the penalty function method, the final step is to ensure that each solution satisfies the original constraints. We check if $x^*$ complies with the feasibility (Condition 1), including equality $h_i(x) = 0$ for $i=1,2,\ldots,j$ and inequality $g_i(x) \leq 0$ for $i=1,2,\ldots,k$. If $x^*$ violates any constraint, it is considered as infeasible. Feasible solutions are then compared based on their objective function values, with the lowest value indicating the optimal solution. This process ensures that the selected solution not only meets all the constraints, but also minimizes the objective function effectively.

\vfil\pagebreak

\section{Exact value of $V_4$.}
In this chapter, we will focus on determining the exact value of $V_4$, continuing from the methodology established in the previous section. From Equation (\ref{eq1.2}), the value of $V_4$ is given by
$$
V_4=\inf _{a \in\left(1, 2\di\cos\frac{\pi}{6}\right]} \frac{\chi_4(a)}{(\sqrt{a}-1)^2},
$$

where $\chi_4(a)$ is defined as
$$
\chi_4(a)=\inf \left\{f(0)-1 \mid f(\phi)\in C_4, a_0\left(f\right)=1 \text { and } a_1\left(f\right)=a\right\}.
$$
 To find the exact value of $V_4$, we will employ the Karush-Kuhn-Tucker conditions along with the penalty function method to solve the optimization problem associated with $\chi_4(a)$. 

 Before solving the problem, it is crucial to restrict the range of $a$ to a smaller interval for computational efficiency and theoretical accuracy.  

\subsection{Revisit of Arestov and Kondrat'ev's method.}
In this subsection, we will show that
$$V_4=\inf _{\di a \in\left[1.5597515,2\cos\frac{\pi}{6}\right]} \frac{\chi_4(a)}{(\sqrt{a}-1)^2}.$$
To prove this, we revisit the approach used by Arestov and Kondratev \cite{ar90}, which provides a set of key inequalities that give $\chi_n(a)$ lower bounds. The following theorem gives a simple lower bound for $\chi_n(a)$.
\begin{theorem}\label{th4.1}\cite{ar90} For $\di a\in \left(1,2\cos\frac{\pi}{n+2}\right]$, the functions $\chi_n (a)$ for $n \geq 2$ possess the following property
\begin{align}
  \chi_n(a)\geq 2 a-1. 
\end{align}
\end{theorem}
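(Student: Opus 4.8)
The plan is to prove the lower bound $\chi_n(a) \geq 2a - 1$ directly from the Fejér--Riesz representation established in Section 3. Recall that any $f(\phi) \in C_n$ with $a_0(f) = 1$ and $a_1(f) = a$ can be written as $f(\phi) = \left|\sum_{k=0}^n x_k \exp(ik\phi)\right|^2$, where the coefficients satisfy $\sum_{k=0}^n x_k^2 = 1$, $2\sum_{k=0}^{n-1} x_k x_{k+1} = a$, and $2\sum_{k=0}^{n-j} x_k x_{k+j} \geq 0$ for $2 \leq j \leq n$. Since $f(0) - 1 = \left(\sum_{k=0}^n x_k\right)^2 - 1$, the quantity $\chi_n(a)$ is the minimum of $\left(\sum_k x_k\right)^2 - 1$ over this feasible set. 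So the task reduces to showing $\left(\sum_{k=0}^n x_k\right)^2 \geq 2a$ for every feasible $\mathbf{x}$.

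The key step is to expand $\left(\sum_{k=0}^n x_k\right)^2 = \sum_{j=0}^n \sum_{k=0}^n x_j x_k$ and group the terms by the index difference $|j - k|$. This gives
\[
\left(\sum_{k=0}^n x_k\right)^2 = \sum_{k=0}^n x_k^2 + 2\sum_{j=1}^n \sum_{k=0}^{n-j} x_k x_{k+j} = 1 + a + \sum_{j=2}^n 2\sum_{k=0}^{n-j} x_k x_{k+j},
\]
using $\sum_k x_k^2 = 1$ and $2\sum_k x_k x_{k+1} = a$. Each remaining inner sum $2\sum_{k=0}^{n-j} x_k x_{k+j}$ is exactly the constraint $G_{j-1}(\mathbf{x}) \geq 0$ from Problem $P_1$, hence nonnegative. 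Therefore $\left(\sum_{k=0}^n x_k\right)^2 \geq 1 + a$.

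This last bound only gives $\chi_n(a) \geq a$, which is weaker than the claimed $\chi_n(a) \geq 2a-1$ when $a > 1$, so a second observation is needed: we must also exploit that $a > 1$ strictly, or rather extract more from the structure. The cleaner route is to note that $f(\pi) = \left|\sum_k (-1)^k x_k\right|^2 \geq 0$ and $f(0) = \left|\sum_k x_k\right|^2 \geq 0$ combine with the identity $f(0) + f(\pi) = 2\sum_k a_{2k}(f)$; more directly, one writes $\left(\sum x_k\right)^2 + \left(\sum (-1)^k x_k\right)^2 = 2\sum_{j \text{ even}} \big(\text{pair sums}\big)$ and $\left(\sum x_k\right)^2 - \left(\sum(-1)^k x_k\right)^2 = 2\sum_{j \text{ odd}}\big(\text{pair sums}\big) \leq 2\left(\sum x_k\right)^2$ needs care. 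I expect the intended argument instead writes $f(0) - 1 = a_1 + a_2 + \cdots + a_n$ and bounds the tail: since $f(0) = \sum a_k$ and $a_0 = 1$, and one shows $a_2 + \cdots + a_n \geq a_1 - 1 = a - 1$ using nonnegativity of $f$ together with evaluating a suitable nonnegative combination such as $f(0) - f(\pi) \geq 0$ paired with Fejér-type positivity.

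The main obstacle will be producing the extra $a - 1$ beyond the trivial bound $f(0) - 1 \geq a$. I would look for an auxiliary nonnegative trigonometric identity — most likely considering $\frac{1}{2\pi}\int_0^{2\pi} f(\phi)(1 - \cos\phi)^{-1}$-type weightings are not available since that is not a polynomial, so instead I would test the quantity $f(0) - 2a_1 + a_0 = \sum_{k} a_k - 2a_1 + a_0$ against something like $\big|\sum_k x_k - (\text{shift})\big|^2 \geq 0$; concretely $\left|\sum_{k=0}^{n}x_k - \sum_{k=0}^{n} x_k\right|$ is trivial, but $\left(\sum_{k=0}^{n} x_k\right)\big(\sum_{k=0}^n x_k\big)$ versus the ``shifted inner product'' $\sum_{k=0}^{n-1} x_k x_{k+1}$ via Cauchy--Schwarz-like manipulations should yield the factor $2$. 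Once the inequality $\left(\sum x_k\right)^2 \geq 2a$ is secured for all feasible $\mathbf{x}$, taking the infimum gives $\chi_n(a) \geq 2a - 1$ immediately, completing the proof.
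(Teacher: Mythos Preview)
Your proposal does not complete the proof. You correctly observe that the Fej\'er--Riesz expansion
\[
\Bigl(\sum_{k=0}^n x_k\Bigr)^2 = 1 + a + \sum_{j=2}^n G_{j-1}(\mathbf{x})
\]
only yields $\chi_n(a)\geq a$, and you recognize that a further $a-1$ must be extracted. But the remainder of the proposal never actually secures the inequality $\bigl(\sum_k x_k\bigr)^2 \geq 2a$: you mention $f(\pi)\geq 0$, then drift into parity splittings and Cauchy--Schwarz heuristics, and close by simply asserting ``once the inequality \ldots\ is secured'' without ever securing it. As written, the argument has a hole at exactly the step that carries the theorem.

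The paper's proof is a one-line evaluation you grazed but did not land on. For $f(\phi)=1+a\cos\phi+\sum_{k\geq 2} a_k\cos k\phi \in C_n$, nonnegativity at $\phi=\pi$ gives
\[
0 \leq f(\pi) = 1 - a + \sum_{k=2}^n (-1)^k a_k \leq 1 - a + \sum_{k=2}^n a_k,
\]
the last inequality because every $a_k\geq 0$. Hence $\sum_{k=2}^n a_k \geq a-1$, so $f(0)-1 = a + \sum_{k=2}^n a_k \geq 2a-1$. No Fej\'er--Riesz representation, no auxiliary identities: just $f(\pi)\geq 0$ together with the sign condition on the coefficients. Your suggestion of ``$f(0)-f(\pi)\geq 0$'' points the wrong way; it is $f(\pi)\geq 0$ alone that does the work.
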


\begin{proof} Given a cosine polynomial $f(\phi)\in C_n$ with the following expression
$$f(\phi)=1+a\cos\phi+a_2\cos 2\phi+\ldots +a_n \cos n\phi. $$
By the nonnegativity of $f(\phi)$, one has $f(\pi)\geq 0$, which implies
$$0 \leq 1-a+\sum_{k=2}^n (-1)^k a_k\leq 1-2a+\sum_{k=1}^{n} a_k,$$
this implies $\di\chi_n(a)=\sum_{k=1}^{n} a_k\geq 2a-1$.
\end{proof}

Subsequently, Arestov and Kondrat’ev employed a more sophisticated approach to establish a precise bound for $\chi_n(a)$. Given a real value function $m:\mathbb{R}\to\mathbb{R}$ such that it is nonnegative on the interval $[a,b]\subset \mathbb{R}$. Then for any $f(\phi)\in C_n$ with the following expression
$$f(\phi)=1+a\cos\phi+a_2\cos 2\phi+\ldots +a_n \cos n\phi, $$
one could define a positive functional $S:C_n\to\mathbb{R}$
$$S(f)=f(\phi_0)+\int_{a}^{b}m(\phi)f(\phi)d\phi\geq 0.$$
If we write $s(k)=S(\cos k\phi)$, then one will have 
\begin{align*}
S(f)=s(0)+as(1)+\sum_{k=2}^na_k s(k)&\geq 0.
\end{align*}
Let $\di M=\sup_{2\leq k\leq n} s(k)$, then
\begin{align*}
M\sum_{k=1}^n a_n\geq Ma+\sum_{k=2}^na_k s(k)\geq [M-s(1)] a-s(0).
\end{align*}
When $M=1$, this gives
\begin{align*}
\chi_n(a)\geq (1-s(1))a-s(0).\label{eq3}
\end{align*}
By choosing different nonnegativefunction $m(\phi)$ such that $M=1$, one could obtain several lower bounds of $\chi_n(a).$ More specifically, Arestov and Kondrat'ev \cite{ar90} have employed the following functions 
\begin{align*}
  S(f)&=f(\pi)+\int_{\pi/2}^\pi \left(3.7386644-1.4700922\cos4\phi-2.2685722\cos8\phi\right)f(\phi)d\phi,\\
  S(f)&=f\left(\frac{2\pi}{3}\right) + \int_{\pi / 3}^\pi\left(2\sqrt{3}+8\left(\phi-\frac{\pi}{3}\right)\right) f(\phi) \mathrm{d} \phi,
\end{align*}
to find another two useful lower bounds of $\chi_n(a)$.

\begin{theorem}\label{th4.2}\cite{ar90} For $\di a\in \left(1,2\cos\frac{\pi}{n+2}\right]$, the functions $\chi_n (a)$ for $ n\geq 2$ possess the following properties:
\begin{enumerate}[label=C.\arabic*, ref=a.\arabic*]
\item $\chi_n(a)\geq 5.8726781 a-6.8726781,$ 
\item $\chi_n(a)\geq 16.5 a-25.8011608.$
\end{enumerate}
\end{theorem}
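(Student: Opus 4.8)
The two inequalities are instances of the positive-functional scheme recalled just above the statement: for a weight $m$ that is nonnegative on an interval $[p,q]$ and a point $\phi_0$, the functional $S(f)=f(\phi_0)+\int_p^q m(\phi)f(\phi)\,d\phi$ is nonnegative on every nonnegative trigonometric polynomial, and writing $s(k):=S(\cos k\phi)$, the bound $\chi_n(a)\ge (1-s(1))a-s(0)$ holds for every $n\ge 2$ as soon as $s(k)\le 1$ for all $k\ge 2$. The plan is to run this scheme twice, with the two functionals already quoted, namely $\phi_0=\pi$, $[p,q]=[\pi/2,\pi]$, $m(\phi)=3.7386644-1.4700922\cos 4\phi-2.2685722\cos 8\phi$ for C.1, and $\phi_0=2\pi/3$, $[p,q]=[\pi/3,\pi]$, $m(\phi)=2\sqrt3+8(\phi-\pi/3)$ for C.2, and in each case to carry out four steps: (i) check $m\ge 0$ on the interval; (ii) compute $s(0)$ and $s(1)$ in closed form; (iii) verify $s(k)\le 1$ for all $k\ge 2$; (iv) assemble the bound.

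For step (i), the weight for C.2 is trivially nonnegative since $m(\phi)\ge 2\sqrt3>0$ on $[\pi/3,\pi]$; for C.1 I would substitute $c=\cos 4\phi$ and use $\cos 8\phi=2c^2-1$ to rewrite $m$ as a downward-opening quadratic in $c$, which is easily seen to be $\ge 0$ on $[-1,1]$ (vanishing only at $c=1$, i.e. at $\phi=\pi/2,\pi$). Hence $S(f)\ge 0$ on all of $C_n$, and expansion gives $s(0)+a\,s(1)+\sum_{k=2}^{n}a_k s(k)\ge 0$ with $s(k)=\cos k\phi_0+\int_p^q m(\phi)\cos k\phi\,d\phi$. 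For step (ii) the integrals are elementary: for C.1, since $\int_{\pi/2}^{\pi}\cos 4\phi\,d\phi=\int_{\pi/2}^{\pi}\cos 8\phi\,d\phi=0$ one gets $s(0)=1+\tfrac{\pi}{2}(3.7386644)=6.8726781$, and evaluating $\int_{\pi/2}^{\pi}\cos\phi\,d\phi=-1$ together with $\int_{\pi/2}^{\pi}\cos 4\phi\cos\phi\,d\phi=\tfrac{1}{15}$ and $\int_{\pi/2}^{\pi}\cos 8\phi\cos\phi\,d\phi=\tfrac{1}{63}$ gives $s(1)=-4.8726781$, so $(1-s(1))a-s(0)=5.8726781\,a-6.8726781$; for C.2, integrating $m$ and $m(\phi)\cos\phi$ against the constant and linear pieces gives $s(0)=1+\tfrac{4\sqrt3\,\pi}{3}+\tfrac{16\pi^2}{9}=25.8011608$ and $s(1)=-\tfrac{31}{2}$, so $(1-s(1))a-s(0)=16.5\,a-25.8011608$.

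Step (iii) is the substantive part. Using product-to-sum identities I would produce a single closed form for $s(k)$ and bound it. For C.1 one finds $s(k)=1$ for every even $k\notin\{4,8\}$ (all integral terms vanish), while $s(4),s(8)<1$ by direct evaluation; for odd $k$ one gets $s(k)=-1+I(k)$ with $I(k)=-\sin\!\big(\tfrac{k\pi}{2}\big)\big(\tfrac{3.7386644}{k}-\tfrac{1.4700922\,k}{k^2-16}-\tfrac{2.2685722\,k}{k^2-64}\big)$, so it suffices to check $k=3,5,7,9$ by hand (where $s(3)=1$ and the rest are strictly smaller) and, for odd $k\ge 9$, to note that each of $\tfrac1k,\tfrac{k}{k^2-16},\tfrac{k}{k^2-64}$ is decreasing, so $|I(k)|$ is at most its value at $k=9$, which is below $2$, whence $s(k)<1$. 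For C.2 one obtains the compact formula $s(k)=\cos\tfrac{2k\pi}{3}-\tfrac{2\sqrt3\,\sin(k\pi/3)}{k}+\tfrac{8\big((-1)^k-\cos(k\pi/3)\big)}{k^2}$; when $3\mid k$ the last two terms cancel and $s(k)=1$; $s(2)=s(4)=1$ by inspection; and for $k\ge 5$ with $3\nmid k$ one has $\cos\tfrac{2k\pi}{3}=-\tfrac12$ while the remaining terms are bounded in absolute value by $\tfrac3k+\tfrac{16}{k^2}\le \tfrac35+\tfrac{16}{25}<\tfrac32$, so $s(k)<1$. In both cases $\sup_{k\ge 2}s(k)=1$. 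Finally, step (iv): since $a_k\ge 0$ and $s(k)\le 1$ we get $\sum_{k=2}^{n}a_k s(k)\le\sum_{k=2}^{n}a_k$, so $S(f)\ge 0$ yields $\sum_{k=1}^{n}a_k=a+\sum_{k=2}^{n}a_k\ge a-s(0)-a\,s(1)=(1-s(1))a-s(0)$; taking the infimum over admissible $f$ gives exactly the two stated inequalities. (Alternatively one may simply cite Arestov and Kondrat'ev \cite{ar90}, from which these bounds are quoted.)

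I expect step (iii) to be the main obstacle: although each individual computation is routine, one must extract a clean closed form for $s(k)$ valid for all $k$ and then organize the inequality $s(k)\le 1$ into a short finite list of small-$k$ evaluations plus a monotone tail estimate; the delicate point is confirming that the supremum equals exactly $1$, since that is what forces the scheme to output precisely the stated coefficients rather than a weaker bound.
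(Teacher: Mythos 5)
Your proposal is correct and follows exactly the positive-functional scheme that the paper sets up in the paragraph preceding the statement; the paper itself supplies no proof, simply citing Arestov and Kondrat'ev \cite{ar90} after describing the two functionals, so your write-up is a fleshed-out version of the intended argument rather than a different route. The only caveat is numerical: with the truncated decimal constants one gets, e.g., $s(3)\approx 1.0000013$ rather than exactly $1$ in case C.1, so the verification that $\sup_{k\ge 2}s(k)=1$ (and hence the exact stated coefficients) ultimately rests on the more precise constants of the cited source.
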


These inequalities will serve as a foundation to the following theorem. The following theorem shows that the exact value of $V_4$ is the infimum of the function $\di\frac{\chi_4(a)}{(\sqrt{a}-1)^2}$ over a specific interval.

\begin{theorem}\label{th4.3}
The exact value of $V_4$ is
$$V_4=\inf _{\di a \in\left[1.5597515,2\cos\frac{\pi}{6}\right]} \frac{\chi_4(a)}{(\sqrt{a}-1)^2}.$$
\end{theorem}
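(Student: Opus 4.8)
The plan is to start from Equation~(\ref{eq1.2}), which already gives
\[
V_4 = \inf_{a \in (1,\,2\cos(\pi/6)]} R(a), \qquad R(a) := \frac{\chi_4(a)}{(\sqrt{a}-1)^2},
\]
and to show that on the left sliver $a \in (1, 1.5597515)$ the ratio $R(a)$ stays strictly above $V_3 = 36.9199911$ (the value established in Section~2). Since $V_4 \le V_3$ (every $f \in C_3$ is an element of $C_4$ with $a_4 = 0$, and $v$ is unchanged), and in fact $V_4 < V_3$, such $a$ cannot influence the infimum, so it must be attained on $[1.5597515,\,2\cos(\pi/6)]$, which is the assertion.

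First I would combine the lower bounds of Theorems~\ref{th4.1} and \ref{th4.2}: for every $a$ in the range,
\[
\chi_4(a) \ge L(a) := \max\bigl\{\, 2a - 1,\ 5.8726781\,a - 6.8726781,\ 16.5\,a - 25.8011608 \,\bigr\},
\]
hence $R(a) \ge \psi(a) := L(a)/(\sqrt{a}-1)^2$. Then I would analyse $\psi$ on $(1, 1.5597515]$ through the substitution $u = \sqrt{a}$. On this interval the third line is negative, so it never realises the maximum. The first ratio $\tfrac{2a-1}{(\sqrt a - 1)^2}$ has $u$-derivative $\tfrac{2-4u}{(u-1)^3} < 0$, hence is strictly decreasing. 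The first and second lines meet at $a_\times = \tfrac{5.8726781}{3.8726781} \approx 1.5164$, with the first larger for $a < a_\times$ and the second larger for $a_\times < a < 1.5597515$. Finally the second ratio $\tfrac{5.8726781\,a - 6.8726781}{(\sqrt a - 1)^2}$ has its only interior critical point at $u = \tfrac{6.8726781}{5.8726781}$, i.e.\ at $a = (6.8726781/5.8726781)^2 < a_\times$, and is strictly decreasing to the right of it. Patching these pieces shows $\psi$ is continuous and strictly decreasing on $(1, 1.5597515]$.

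It then remains to verify that the cutoff is exactly where this decreasing lower bound meets $V_3$:
\[
\psi(1.5597515) = \frac{5.8726781\cdot 1.5597515 - 6.8726781}{\bigl(\sqrt{1.5597515} - 1\bigr)^2} = 36.9199911 = V_3,
\]
which is the content of the Matlab (golden-section) computation. By monotonicity, $R(a) \ge \psi(a) > \psi(1.5597515) = V_3$ for all $a \in (1, 1.5597515)$. Writing $(1, 2\cos(\pi/6)] = (1, 1.5597515) \cup [1.5597515,\, 2\cos(\pi/6)]$ and taking infima, the infimum of $R$ over the first set is $\ge V_3 > V_4$, so the infimum $V_4$ over the whole interval must coincide with the infimum over $[1.5597515,\,2\cos(\pi/6)]$, which is the claim.

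The step I expect to be the main obstacle is the second one: keeping the three linear bounds straight, determining which is active on each subinterval, and proving that their pointwise maximum divided by $(\sqrt a - 1)^2$ is genuinely monotone, since a sign slip or a misplaced crossover would break the reduction. The only other delicate points are that $1.5597515$ must be the numerical root of $\psi(a) = V_3$ to the displayed precision, and that $V_4 < V_3$ strictly --- needed so that the \emph{open} sliver truly drops out of the infimum --- which one can take from the literature or confirm by exhibiting any degree-$4$ member of $C_n$ with $v$-value below $V_3$.
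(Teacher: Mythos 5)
Your proposal is correct and follows essentially the same route as the paper: both use the Arestov--Kondrat'ev linear lower bounds on $\chi_4(a)$, the monotonicity analysis of $a\mapsto (Aa-B)/(\sqrt{a}-1)^2$, and the containment $C_3\subset C_4$ to show the ratio exceeds $V_3\ge V_4$ on the left portion of $(1,2\cos(\pi/6)]$ (the paper splits that portion at $1.5275169$ and applies $F_1$ and $F_2$ separately, while you take their pointwise maximum, a cosmetic difference). One small numerical point: the paper evaluates $F_2(1.5597515)=36.9199930$, strictly above $V_3=36.9199911$ rather than equal to it, so the strict inequality $V_4<V_3$ that you flag as a needed delicate point is in fact not required.
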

\begin{proof} Since the set of cosine polynomials $C_3 \subset C_4$, one have $V_4\leq V_3=36.9199911$. However, by Theorem $\ref{th4.1}$ and Theorem $\ref{th4.2}$, we have
\begin{align}
V_4(a) &\geq F_1(a)=\frac{2a-1}{(\sqrt{a}-1)^2},\\
V_4(a) &\geq F_2(a)=\frac{5.8726781 a-6.8726781}{(\sqrt{a}-1)^2}.
\end{align}
for $a \in \left(1,2\di\cos\frac{\pi}{6}\right].$ Given function $F:(1,\infty)\to \mathbb{R}$ with the form
$$F(a)=\frac{Aa-B}{(\sqrt{a}-1)^2},$$
where $A,B$ are constant. Taking the derivative, one has
$$F'(a)=\frac{B-A \sqrt{x}}{(\sqrt{x}-1)^3 \sqrt{x}}.$$
This implies when $A>B$ the function $F(a)$ is decreasing on the interval $(1,\infty)$. On the other hand, when $A<B$, the function increasing on the interval $\di \left(1,(B/A)^2\right]$ and decreasing on the interval $\di[(B/A)^2,\infty)$. Apply this on the function $F_1(a),F_2(a)$ and $F_3(a)$, we found out $F_1(a)$ is a decreasing function on the interval $\di\left(1,2\cos\frac{\pi}{6}\right]$. However, the function $F_2(a)$ is increasing on $(1,1.3695543]$ and is decreasing on the interval $\di\left[1.3695543,2\cos\frac{\pi}{6}\right]$. 

Now, given any $f(\phi) \in C_4$ with $a_0(f)=1,a_1(f)=a\in (1,1.5275169]$, one has
$$v(f)\geq \frac{\chi_4(a)}{(\sqrt{a}-1)^2} \geq F_1(a) \geq F_1(1.5275169)=36.9199978>V_4. $$
Similarly, for any $f(\phi) \in C_4$ with $a_0(f)=1,a_1(f)=a\in [1.5275169,1.5597515]$, one could obtain
$$v(f)\geq \frac{\chi_4(a)}{(\sqrt{a}-1)^2} \geq F_2(a) \geq  F_2(1.5597515)=36.9199930>V_4. $$
This completes the proof.
\end{proof}

\subsection{Exact value of $V_4$.}

By Fejer-Reisz theorem, the problem of finding the $\chi_4(a)$ for $\di a\in\left(1, 2\di\cos\frac{\pi}{6}\right]$ can be formalized as the minimization problem in $\mathbb{R}^5$ as:
\begin{problem}\label{thm:optimization_infimum}
  Consider the optimization problem \( P_1(a) \) defined as follows:

\[
\begin{aligned}
\text{$P_{1}(a)$: Minimize } & F(\mathbf{x}) = (x_0 + x_1 + \ldots + x_4)^2 - 1 \\
\text{subject to } & H_1(\mathbf{x}) = x_0^2 + x_1^2 + x_2^2 + \ldots + x_4^2 = 1, \\
& H_2(\mathbf{x}) = 2(x_0 x_1 + x_1 x_2 + \ldots + x_{3} x_4) = a, \\
& G_1(\mathbf{x}) = 2(x_0 x_2 + x_1 x_3 + x_{2} x_4) \geq 0, \\
& G_2(\mathbf{x}) = 2(x_0 x_3 + x_1 x_4 ) \geq 0, \\
& G_{3}(\mathbf{x}) = 2x_0 x_4 \geq 0.
\end{aligned}
\]

Define \( p_{1}(a) \) as the optimal value of the objective function \( F(\mathbf{x}) \) for a given \( a \):
  
  \[
  p_{1}(a) = \min_{\mathbf{x}} F(\mathbf{x}) \quad \text{subject to the constraints of } P_1(a).
  \]
  \end{problem}

However, instead of considering Problem $P_1(a)$ with five constraints, we will consider the following problem with two constraints.

\begin{problem}\label{thm:optimization_infimum}
  Consider the optimization problem \( P_2(a) \) defined as follows:
  
  \[
  \begin{aligned}
  \text{Minimize } & F(\mathbf{x}) = (x_0 + x_1 + \ldots + x_4)^2 - 1\\
  \text{subject to } & H_1(\mathbf{x}) = \sum_{i=0}^{4} x_i^2 = 1, \\
  & H_2(\mathbf{x}) = 2\left( x_0x_1 + x_1x_2 + \ldots + x_3x_4 \right) = a.
  \end{aligned}
  \]
  
  Define \( p_2(a) \) as the optimal value of the objective function \( F(\mathbf{x}) \) for a given \( a \):
  
  \[
  p_2(a) = \min_{\mathbf{x}} F(\mathbf{x}) \quad \text{subject to the constraints of } P_2(a).
  \]
  \end{problem}

Since the feasible region of problem $P_1(a)$ is the subset of the feasible region of problem $P_2(a)$, this implies
$$\chi_4(a)=p_1(a)\geq p_2(a).$$

If the optimal point of problem $P_2(a)$ also satisfies the condition $G_1(\mathbf{x})\text{ and } G_2(\mathbf{x})$, then the optimal point will lie inside the feasible region of problem $P_1(a)$. In this case,
$$\chi_4(a)=p_1(a)=p_2(a).$$

\begin{theorem}
The exact value of \( V_4 \) is given by:

\[
\di V_4=\inf_{a \in \di \left[1.5597515, 2\cos\frac{\pi}{6}\right]} \frac{p_2(a)}{(\sqrt{a} - 1)^2} = 34.8992259.
\]

The exact solution corresponding to this infimum could be obtained at the point 
$$p=[0.2114174,0.5028452,0.6363167,0.5028451,0.2114174]\in \mathbb{R}^5.$$
It corresponds to the following polynomial in $C_4$,
$$f(\phi)=a_0+a_1\cos\phi+a_2\cos 2\phi+a_3\cos 3\phi+a_4\cos 4\phi,$$
where
\begin{align*}
a_0&=1,\\
a_1&=1.7051159,\\ 
a_2&=1.0438202,\\
a_3&=0.4252409,\\
a_4&=0.0893946.
\end{align*}
\end{theorem}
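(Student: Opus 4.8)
The plan is to establish the value of $V_4$ in two independent halves: an upper bound $V_4 \le 34.8992259$ coming from the explicit polynomial exhibited in the statement, and a matching lower bound $V_4 \ge 34.8992259$ coming from a complete analysis of Problem $P_2(a)$ on the reduced interval $\di\left[1.5597515, 2\cos\frac{\pi}{6}\right]$ supplied by Theorem~\ref{th4.3}. For the upper bound, I would simply verify that the listed point $p=[0.2114174,\ldots,0.2114174]\in\mathbb{R}^5$ satisfies $H_1(p)=1$, that $H_2(p)=a_1=1.7051159$, and that the associated cosine coefficients $a_2,a_3,a_4$ (obtained from the Fej\'er--Riesz autocorrelation formulas $b_j=\sum_k x_kx_{k+j}$) are nonnegative, so that $f\in C_4$; then compute $v(f)=\frac{a_1+a_2+a_3+a_4}{(\sqrt{a_1}-1)^2}$ directly and check it equals $34.8992259$. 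This part is routine arithmetic.

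The substance is the lower bound, and the key reduction is the chain $\chi_4(a)=p_1(a)\ge p_2(a)$ already noted in the excerpt, together with the observation that at the optimum of $P_2(a)$ the dropped constraints $G_1,G_2,G_3$ are in fact satisfied, forcing equality $\chi_4(a)=p_2(a)$ there. So I would carry out the general procedure of Section~3 on Problem $P_2(a)$: it has only the two equality constraints $H_1,H_2$ and no inequality constraints, so there is a single KKT subproblem. I would write the Lagrangian stationarity equations $\nabla F(\mathbf{x})+\lambda_1\nabla H_1(\mathbf{x})+\lambda_2\nabla H_2(\mathbf{x})=\mathbf{0}$, i.e. for each coordinate $2(x_0+\cdots+x_4) + 2\lambda_1 x_i + 2\lambda_2(\text{neighbours of }x_i)=0$. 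Because the neighbour-sum operator is the tridiagonal (path-graph adjacency) matrix on $\{0,\ldots,4\}$, the stationarity system says that the all-ones vector, after an affine shift, is an eigen-like combination; exploiting the palindromic symmetry $x_i=x_{4-i}$ (which one expects and can justify at the optimum) collapses the five equations to three, and one solves for the ratios $x_0:x_1:x_2$ in closed form in terms of $\lambda_1,\lambda_2$, then pins down $\lambda_1,\lambda_2$ from $H_1=1$ and $H_2=a$. Substituting back gives an explicit (if messy) formula for $p_2(a)$, hence for $\di g(a):=\frac{p_2(a)}{(\sqrt a-1)^2}$, on the interval in question.

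The final step is the one-dimensional minimization of $g(a)$ over $\di\left[1.5597515,2\cos\frac{\pi}{6}\right]$: I would show $g$ is continuous there (the denominator never vanishes since $a>1$), locate its critical point by a golden-section search as in the earlier sections, confirm the minimizer is $a=1.7051159$ with $g(a)=34.8992259$, and verify at that $a$ the optimal $\mathbf{x}$ equals $p$ and satisfies $G_1(p),G_2(p),G_3(p)\ge 0$, so that indeed $\chi_4(a)=p_2(a)$ at the minimizer. Combined with the upper bound, this yields $V_4=34.8992259$.

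The main obstacle I anticipate is rigorously justifying that the global minimizer of $P_2(a)$ is the palindromic KKT point found above, rather than some asymmetric or boundary-of-region competitor. Weierstrass (Theorem~\ref{th3.2}) guarantees a global minimum exists and KKT (Theorem~\ref{th3.3}) that it is a stationary point — provided the constraint gradients $\nabla H_1,\nabla H_2$ are linearly independent there, which must be checked (it fails only on a low-dimensional set one should exclude). One must then enumerate \emph{all} real solutions of the stationarity-plus-constraint polynomial system and compare their objective values, or else argue via the symmetry/convexity structure of $F$ restricted to the sphere $H_1=1$ that the palindromic branch is the minimizing one; carrying this out carefully, and controlling the numerical search so the stated digits are certified, is where the real work lies.
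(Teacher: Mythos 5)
Your proposal is correct in outline and shares the same logical skeleton as the paper's proof: both rest on Theorem \ref{th4.3} to restrict $a$ to $\left[1.5597515,\,2\cos\frac{\pi}{6}\right]$, on the relaxation chain $\chi_4(a)=p_1(a)\geq p_2(a)$, and on checking a posteriori that the optimizer of $P_2(a)$ satisfies the dropped constraints $G_1,G_2,G_3$, which forces $\chi_4=p_2$ at the relevant $a$ and closes the sandwich. Where you genuinely differ is in how $P_2(a)$ is solved. The paper makes no attempt at a closed form: it invokes the penalty-function theorem (Theorem \ref{th3.4}) and runs a Newton--Raphson iteration on the penalized objective over a fine grid of $a$-values, reading off the infimum $34.8992259$ from the resulting plot. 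You instead propose to exploit the fact that $P_2(a)$ carries only the two equality constraints, write the stationarity system $\nabla F+\lambda_1\nabla H_1+\lambda_2\nabla H_2=\mathbf{0}$ with its tridiagonal neighbour operator, use the reversal symmetry $x_i=x_{4-i}$ (which is indeed automatic for the nondegenerate branches, since the all-ones vector and the path adjacency matrix both commute with reversal), and obtain the candidate optima in closed form before a one-dimensional search in $a$. Your route, if completed, buys something the paper does not have: an explicit enumeration of KKT branches that could certify the global minimum of $P_2(a)$, whereas the paper's penalty/Newton computation only produces a local minimizer whose globality rests on the multi-start heuristics in the Matlab code. Your closing paragraph correctly identifies this certification step as the real difficulty; note that it is a gap the paper itself does not close (it neither enumerates the other stationary branches nor verifies linear independence of $\nabla H_1,\nabla H_2$ at the reported point), so your plan is, if anything, more demanding and more rigorous than the published argument. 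Your additional upper-bound half (direct verification that the exhibited polynomial lies in $C_4$ and evaluates to $34.8992259$) is a clean touch that the paper leaves implicit.
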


\begin{proof}
Fix an $\di a\in \left[1.5597515, 2\cos\frac{\pi}{6}\right]$, by penalty function method (Theorem \ref{th3.4}), we have
\[
p_{2}(a) = \lim_{\mu \to \infty} \theta_1(\mu),
\]
where 
\begin{align*}
\theta_1(\mu) &= \inf \left\{ F(\mathbf{x}) + \mu \left((H_1(\mathbf{x})-1)^2+(H_2(\mathbf{x})-a)^2\right) \mid \mathbf{x} \in \mathbb{R}^5 \right\}.
\end{align*}

Then, a Matlab-based algorithm is employed to compute the $\di \frac{p_{2}(a)}{(\sqrt{a}-1)^2}$. The algorithm utilizes the Newton-Raphson method in conjunction with a penalty function approach to iteratively find the minimizer of the penalized objective function. The detailed Matlab implementation is provided in \ref{Appendix B}. The Figure \ref{fig5} shows the ratio $\di \frac{p_{2}(a)}{(\sqrt{a}-1)^2}$.
\begin{figure}[H]
  \centering
  \includegraphics[width=9cm]{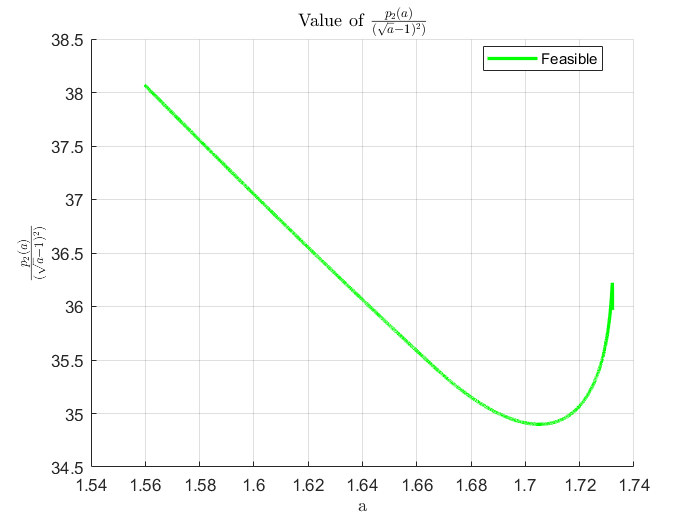}
  \caption{The ratio $\di \frac{p_{2}(a)}{(\sqrt{a}-1)^2}$.}
  \label{fig5}
\end{figure}

Then, the infimum of the ratio is determined to be \( 34.8992259 \), which could be achieved at several points, such as 
$$p=[0.2114174,0.5028451,0.6363167,0.5028451,0.2114174].$$ 
The polynomial corresponding to this solution is,
$$f(\phi)=a_0+a_1\cos\phi+a_2\cos 2\phi+a_3\cos 3\phi+a_4\cos 4\phi,$$
where
\begin{align*}
a_0&=1,\\
a_1&=1.7051159,\\ 
a_2&=1.0438202,\\
a_3&=0.4252409,\\
a_4&=0.0893946.
\end{align*}

One could see that the point $p$ also lies in the feasible region of $P_1(a_1)$. This implies that the optimal solution of problem $P_2(a_1)$ is also the optimal solution of problem $P_1(a_1)$ and thus agrees with the value of $\chi_4(a_1)$. By the relationship 
$$\frac{\chi_4(a)}{(\sqrt{a}-1)^2}\geq \frac{p_2(a)}{(\sqrt{a}-1)^2},$$
one can conclude
\begin{align*}
V_4&=\inf_{a \in \di \left[1.5597515, 2\cos\frac{\pi}{6}\right]} \frac{\chi_4(a)}{(\sqrt{a} - 1)^2}\\
&= \inf_{a \in \di \left[1.5597515, 2\cos\frac{\pi}{6}\right]} \frac{p_2(a)}{(\sqrt{a} - 1)^2}\\
&= 34.8992259.
\end{align*}
This completes the proof.
\end{proof}

\vfil\pagebreak

\section{Exact value of $V_5$.}
In this section, we aim to determine the exact value of $V_5$, following a similar method used in the previous section. From Equation (\ref{eq1.2}), the value of $V_5$ is given by

$$
V_5=\inf _{\di a \in\left(1, 2\di\cos\frac{\pi}{7}\right]} \frac{\chi_5(a)}{(\sqrt{a}-1)^2},
$$

where $\chi_5(a)$ is defined as
$$
\chi_5(a)=\inf \left\{f(0)-1 \mid f(\phi)\in C_5, a_0\left(f\right)=1 \text { and } a_1\left(f\right)=a\right\}.
$$

\subsection{Exact value of $V_5$.}

First, we will restrict the range of $a$ to a smaller interval for computational efficiency and theoretical accuracy as illustrated in the last section. The following theorem states that in order to find the exact value of $V_5$, one only needs to find the exact value of $\chi_5(a)$ for $\di a\in\left[1.6456659,2\cos\frac{\pi}{7}\right]$.

\begin{theorem}\label{th5.1}
The exact Value of $V_5$ is
$$V_5=\inf _{\di a \in\left[1.6456659,2\cos\frac{\pi}{7}\right]} \frac{\chi_5(a)}{(\sqrt{a}-1)^2}.$$
\end{theorem}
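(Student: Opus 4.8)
The plan is to mirror the proof of Theorem~\ref{th4.3} exactly, using the lower bounds for $\chi_5(a)$ that come from Theorem~\ref{th4.1} and Theorem~\ref{th4.2} (both of which are stated for all $n\geq 2$, hence apply with $n=5$). Since $C_4\subset C_5$, the trivial inclusion gives $V_5\leq V_4=34.8992259$, which is the number the various lower bounds must beat on the deleted sub-interval $\left(1,1.6456659\right)$.

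First I would record the three lower bounds for the ratio $\frac{\chi_5(a)}{(\sqrt a-1)^2}$ on $\left(1,2\cos\frac{\pi}{7}\right]$, namely
\[
F_1(a)=\frac{2a-1}{(\sqrt a-1)^2},\quad F_2(a)=\frac{5.8726781\,a-6.8726781}{(\sqrt a-1)^2},\quad F_3(a)=\frac{16.5\,a-25.8011608}{(\sqrt a-1)^2},
\]
all of which follow from $\chi_5(a)\geq\chi_n(a)$-type inequalities in Theorems~\ref{th4.1} and \ref{th4.2}. Next I would invoke the monotonicity lemma already established inside the proof of Theorem~\ref{th4.3}: for $F(a)=\frac{Aa-B}{(\sqrt a-1)^2}$ one has $F'(a)=\frac{B-A\sqrt a}{(\sqrt a-1)^3\sqrt a}$, so $F$ is decreasing on $(1,\infty)$ when $A>B$, and when $A<B$ it increases on $\bigl(1,(B/A)^2\bigr]$ and decreases on $\bigl[(B/A)^2,\infty\bigr)$. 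Applying this: $F_1$ is decreasing on the whole range; $F_2$ increases up to $a=(6.8726781/5.8726781)^2$ and decreases afterwards; $F_3$ increases up to $a=(25.8011608/16.5)^2\approx 2.445$, which lies to the right of $2\cos\frac{\pi}{7}\approx1.8019$, so $F_3$ is \emph{increasing} on the entire relevant interval.

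Then I would split $\left(1,1.6456659\right)$ into sub-intervals on which one of the $F_i$ is monotone and exceeds $V_4$, and evaluate $F_i$ at the worst endpoint of each piece. Concretely: on $(1,a_1]$ use the decreasing $F_1$ and check $F_1(a_1)>V_4$; on $[a_1,a_2]$ use $F_2$ (decreasing past its max) and check $F_2(a_2)>V_4$; on $[a_2,1.6456659]$ use the increasing $F_3$ and check $F_3(1.6456659)>V_4$. Choosing the breakpoints $a_1,a_2$ so that each of these three numerical inequalities holds — this is the one place requiring care, since the bound $F_3$ only kicks in on the upper part and its value at $1.6456659$ must still clear $34.8992259$ — finishes the argument: any $f\in C_5$ with $a_1(f)=a<1.6456659$ has $v(f)\geq F_i(a)>V_4\geq V_5$, so it cannot attain the infimum, and hence $V_5=\inf_{a\in[1.6456659,\,2\cos(\pi/7)]}\frac{\chi_5(a)}{(\sqrt a-1)^2}$. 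The main obstacle is purely bookkeeping: verifying that the chosen breakpoints $a_1,a_2$ and the endpoint $1.6456659$ are consistent with all three monotonicity-plus-threshold checks simultaneously (a Matlab golden-section search, as used earlier, handles the numerics).
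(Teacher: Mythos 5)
Your overall strategy is the paper's: use $C_4\subset C_5$ to get $V_5\leq V_4=34.8992259$, then excise the left part of $\bigl(1,2\cos\frac{\pi}{7}\bigr]$ using the lower bounds from Theorems~\ref{th4.1} and \ref{th4.2} together with the monotonicity of $F(a)=\frac{Aa-B}{(\sqrt a-1)^2}$. However, your third sub-interval is a genuine gap. You propose to cover $[a_2,1.6456659]$ with the \emph{increasing} bound $F_3$ and to ``check $F_3(1.6456659)>V_4$.'' For an increasing function the infimum over $[a_2,1.6456659]$ sits at the \emph{left} endpoint $a_2$, so a value at the right endpoint tells you nothing about the rest of that piece; and in any case the check fails numerically: $F_3(1.6456659)=\frac{16.5\cdot 1.6456659-25.8011608}{(\sqrt{1.6456659}-1)^2}\approx 16.9$, nowhere near $34.899$. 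The bound $F_3$ only clears the threshold for $a$ near $1.82$ and above, which is why the paper uses it solely to trim the \emph{right} end of the interval for $n\geq 6$ (where $2\cos\frac{\pi}{n+2}>1.8231801$); for $n=5$ one has $2\cos\frac{\pi}{7}\approx 1.8019<1.8231801$, so $F_3$ is of no use at all and the right endpoint is left untouched.

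The repair is to drop $F_3$ and use a two-piece decomposition exactly as in Theorem~\ref{th4.3}: on $(1,1.5510971]$ the decreasing $F_1$ gives $v(f)\geq F_1(1.5510971)=34.8992605>V_5$, and on $[1.5510971,1.6456659]$ the bound $F_2$ (decreasing there, since $1.5510971>1.3695543$) gives $v(f)\geq F_2(1.6456659)=34.8992261>V_5$. These two pieces already cover all of $(1,1.6456659]$, so no third bound is needed. Your hedging remark that ``the bound $F_3$ only kicks in on the upper part'' correctly identifies the danger, but as written the plan does not close, and the breakpoints $a_1,a_2$ you leave unspecified cannot be chosen to make the $F_3$ step work.
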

\begin{proof} Since the set of cosine polynomials $C_4 \subset C_5$, one have $V_5\leq V_4=34.8992259$. However, by Theorem $\ref{th4.1}$ and Theorem $\ref{th4.2}$, we have
\begin{align}
V_5(a) &\geq F_1(a)=\frac{2a-1}{(\sqrt{a}-1)^2},\\
V_5(a) &\geq F_2(a)=\frac{5.8726781 a-6.8726781}{(\sqrt{a}-1)^2}.
\end{align}
for $a \in \left(1,2\di\cos\frac{\pi}{7}\right].$ Given function $F:(1,\infty)\to \mathbb{R}$ with the form
$$F(a)=\frac{Aa-B}{(\sqrt{a}-1)^2},$$
where $A,B$ are constant. Taking the derivative, one have
$$F'(a)=\frac{B-A \sqrt{x}}{(\sqrt{x}-1)^3 \sqrt{x}}.$$
This implies when $A>B$ the function $F(a)$ is decreasing on the interval $(1,\infty)$. On the other hand, when $A<B$, the function increasing on the interval $\di \left(1,(B/A)^2\right]$ and decreasing on the interval $\di[(B/A)^2,\infty)$. Apply this on the function $F_1(a)$ and $F_2(a)$, we found out $F_1(a)$ is a decreasing function on the interval $\di\left(1,2\cos\frac{\pi}{7}\right]$. However, the function $F_2(a)$ is increasing on $(1,1.3695543]$ and is decreasing on the interval $\di\left[1.3695543,2\cos\frac{\pi}{7}\right]$. 

Now, given any $f(\phi) \in C_5$ with $a_0(f)=1,a_1(f)=a\in (1,1.5510971]$, one has
$$v(f)\geq \frac{\chi(a)}{(\sqrt{a}-1)^2} \geq F_1(a) \geq F_1(1.5510971)=34.8992605>V_5. $$
Similarly, for any $f(\phi) \in C_5$ with $a_0(f)=1,a_1(f)=a\in [1.5510971,1.6456659]$, one could obtain
$$v(f)\geq \frac{\chi(a)}{(\sqrt{a}-1)^2} \geq F_2(a) \geq  F_2(1.6456659)=34.8992261>V_5. $$
This completes the proof.
\end{proof}

By Fejer-Reisz theorem, the problem of finding the $\chi_5(a)$ for $\di a\in\left(1, 2\di\cos\frac{\pi}{7}\right]$ can be formalized as the minimization problem in $\mathbb{R}^6$ as:
\begin{problem}\label{thm:optimization_infimum}
  Consider the optimization problem \( Q_1(a) \) defined as follows:

\[
\begin{aligned}
\text{$Q_1(a)$: Minimize } & F(\mathbf{x}) = (x_0 + x_1 + \ldots + x_5)^2 - 1 \\
\text{subject to } & H_1(\mathbf{x}) = x_0^2 + x_1^2 + x_2^2 + \ldots + x_5^2 = 1, \\
& H_2(\mathbf{x}) = 2(x_0 x_1 + x_1 x_2 + \ldots + x_{4} x_5) = a, \\
& G_1(\mathbf{x}) = 2(x_0 x_2 + x_1 x_3 + \ldots+ x_{3} x_5) \geq 0, \\
& G_2(\mathbf{x}) = 2(x_0 x_3 + x_1 x_4 + x_{2} x_5) \geq 0, \\
& \quad\quad\quad\vdots \\
& G_{4}(\mathbf{x}) = 2x_0 x_5 \geq 0.
\end{aligned}
\]

Define \( q_1(a) \) as the optimal value of the objective function \( F(\mathbf{x}) \) for a given \( a \):
  
  \[
  q_1(a) = \min_{\mathbf{x}} F(\mathbf{x}) \quad \text{subject to the constraints of } Q_1(a).
  \]
  \end{problem}

However, instead of considering Problem $Q_1(a)$ with six constraints, we will consider the following problem with three constraints.

\begin{problem}\label{thm:optimization_infimum}
  Consider the optimization problem \( Q_2(a) \) defined as follows:
  
  \[
  \begin{aligned}
  \text{Minimize } & F(\mathbf{x}) = (x_0 + x_1 + \ldots + x_5)^2 - 1\\
  \text{subject to } & H_1(\mathbf{x}) = \sum_{i=0}^{5} x_i^2 = 1, \\
  & H_2(\mathbf{x}) = 2\left( x_0x_1 + x_1x_2 + \ldots + x_4x_5 \right) = a, \\
  & G_4(\mathbf{x}) = 2 x_0x_5  \geq 0.
  \end{aligned}
  \]
  
  Define \( q_2(a) \) as the optimal value of the objective function \( F(\mathbf{x}) \) for a given \( a \):
  
  \[
  q_2(a) = \min_{\mathbf{x}} F(\mathbf{x}) \quad \text{subject to the constraints of } Q_2(a).
  \]
  \end{problem}

Since the feasible region of problem $Q_1(a)$ is the subset of the feasible region of problem $Q_2(a)$, this implies
$$\chi_5(a)=q_1(a)\geq q_2(a).$$
If the optimal point of problem $Q_2(a)$ also satisfies the condition $G_1(\mathbf{x}), G_2(\mathbf{x}),\text{ and } G_3(\mathbf{x})$, then the optimal point will lie inside the feasible region of problem $Q_1(a)$. In this case,
$$\chi_5(a)=q_1(a)=q_2(a).$$

\begin{theorem}
The exact value of \( V_5 \) can be expressed as:

\[
V_5=\inf_{\di a \in \left[1.6456659,2\cos\frac{\pi}{7}\right]} \frac{q_2(a)}{(\sqrt{a} - 1)^2} = 34.8992259.
\]

The exact solution corresponding to this infimum could be obtained at the point
$$q=[0.2114174,0.5028451,0.6363167,0.5028451,0.2114174,0].$$ 
 The polynomial corresponding to this solution is,
$$f(\phi)=a_0+a_1\cos\phi+a_2\cos 2\phi+a_3\cos 3\phi+a_4\cos 4\phi+a_5 \cos 5\phi,$$
where
\begin{align*}
a_0&=1,\\
a_1&=1.7051159,\\ 
a_2&=1.0438202,\\
a_3&=0.4252409,\\
a_4&=0.0893946,\\
a_5&=0.
\end{align*}
\end{theorem}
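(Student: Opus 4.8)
The plan is to mirror the argument used for $V_4$, now carrying the single inequality constraint $G_4(\mathbf{x}) = 2x_0 x_5 \ge 0$ through the Karush--Kuhn--Tucker machinery. By Theorem \ref{th5.1} it suffices to work on the compact interval $\bigl[1.6456659, 2\cos\frac{\pi}{7}\bigr]$. Fix such an $a$. First I would observe that $Q_2(a)$ has exactly one inequality constraint, so complementary slackness decomposes it into two subproblems: the \emph{active} case $x_0 x_5 = 0$ (with $u_4$ free) and the \emph{inactive} case $x_0 x_5 > 0$ (with $u_4 = 0$, so $G_4$ is simply dropped). In each case the problem reduces to minimizing $F(\mathbf{x})$ subject only to equality constraints, to which the penalty-function method (Theorem \ref{th3.4}) applies.

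For each of the two subproblems I would build the penalized objective $\Phi(\mathbf{x},\mu) = F(\mathbf{x}) + \mu\bigl((H_1(\mathbf{x})-1)^2 + (H_2(\mathbf{x})-a)^2\bigr)$, augmented by $\mu\,G_4(\mathbf{x})^2$ in the active case, minimize it for increasing $\mu$ using the Newton--Raphson iteration implemented in Matlab, and pass to the limit $\mu\to\infty$; Theorem \ref{th3.4} then identifies the limit point as a solution of the subproblem and certifies $\mu\,\alpha(\mathbf{x}_\mu)\to 0$, while the stationarity multipliers are recovered as $\lambda_i = \lim 2\mu h_i(\mathbf{x}_\mu)$ and $u_4 = \lim 2\mu G_4(\mathbf{x}_\mu)$ as in the General Procedure. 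Comparing the two candidate values and discarding any that violates $G_4 \ge 0$ yields $q_2(a)$; then a golden-section search over the interval produces $\inf_a \frac{q_2(a)}{(\sqrt a - 1)^2} = 34.8992259$, attained near $a = 1.7051159$ at the point $q = [0.2114174, 0.5028451, 0.6363167, 0.5028451, 0.2114174, 0]$.

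To close the loop I would check directly that $q$ satisfies $G_1(q) \ge 0$, $G_2(q) \ge 0$, and $G_3(q) \ge 0$, so that $q$ lies in the feasible region of $Q_1(a)$ with $a = H_2(q) = 1.7051159$; combined with $q_1(a) \ge q_2(a)$ this forces $q_1(a) = q_2(a) = \chi_5(a)$ at that $a$. The chain
\[
V_5 = \inf_{a} \frac{\chi_5(a)}{(\sqrt a - 1)^2} \;\ge\; \inf_{a} \frac{q_2(a)}{(\sqrt a - 1)^2} \;=\; 34.8992259,
\]
together with the fact that $34.8992259$ is actually attained by the cosine polynomial with coefficients $a_0 = 1$, $a_1 = 1.7051159$, $a_2 = 1.0438202$, $a_3 = 0.4252409$, $a_4 = 0.0893946$, $a_5 = 0$ (which one verifies lies in $C_5$), gives the matching upper bound and hence equality. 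Note that this optimal polynomial is exactly the one found for $V_4$ with a vanishing degree-$5$ term, consistent with $C_4 \subset C_5$ and $V_5 \le V_4$.

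The main obstacle is the rigor of the numerical steps: the golden-section search and the Newton--Raphson/penalty iteration only produce approximations, so to make the displayed value a genuine theorem one must either exhibit the optimal point and multipliers in closed form and verify the KKT system exactly, or supply explicit error bounds showing the true infimum cannot differ from $34.8992259$ at the stated precision. A secondary point requiring care is the hypothesis of Theorem \ref{th3.4} — that $\Phi(\cdot,\mu)$ attains its infimum for every $\mu$ and that the resulting sequence $\{\mathbf{x}_\mu\}$ has a convergent subsequence — together with the constraint qualification (linear independence of the active-constraint gradients, or Slater's condition) needed to invoke the KKT necessary conditions in the active case.
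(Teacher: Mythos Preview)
Your proposal is correct and follows essentially the same approach as the paper: restrict to the interval via Theorem~\ref{th5.1}, split $Q_2(a)$ into the two complementary-slackness subproblems (inactive $u=0$ versus active $G_4=0$), solve each by the penalty/Newton--Raphson scheme, compare to obtain $q_2(a)$, locate the infimum numerically, and then verify that the minimizer $q$ is feasible for $Q_1$ so that $\chi_5(a_1)=q_2(a_1)$ at the optimal $a_1$. The only cosmetic differences are that the paper sweeps $a$ on a fine grid rather than by golden-section search, and it does not explicitly raise the caveats you note about numerical rigor and the hypotheses of Theorem~\ref{th3.4}; your closing paragraph is in fact more scrupulous than the paper on those points.
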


\begin{proof}
Fix an $\di a\in \left[1.64566600, 2\cos\frac{\pi}{7}\right]$, we would first like to establish the numerical solution of the optimization problem \( Q_2(a) \). By applying  the Karush-Kuhn-Tucker conditions (Theorem \ref{th3.3}), if a point $\overline{\mathbf{x}}\in \mathbb{R}^6$ solves Problem $Q_2(a)$ globally, there exist scalars $\lambda_1,\lambda_2,$ and $u_1$  such that the following conditions hold:
\begin{align}
\nabla F(\overline{\mathbf{x}})+\sum_{i=1}^2 \lambda_i \nabla H_i(\overline{\mathbf{x}})+ u_1 \nabla G_{4}(\overline{\mathbf{x}}) & =\mathbf{0}, & \\
u_1 G_{4}(\overline{\mathbf{x}}) & =0, \label{eq8.5} \\
u_1 & \geq 0. 
\end{align}
By the Equation (\ref{eq8.5}), the problem $Q_2(a)$ could be divided into the following two subproblems, we call them problem $Q_{2,1}(a)$ and $Q_{2,2}(a)$ respectively. We will use $q_{2,1}(a)$ and $q_{2,2}(a)$ to denote the solutions of these subproblems respectively. The first subcase is when $u_1=0$, the minimization problem corresponding to this subcase is

\[
\begin{aligned}
\text{$Q_{2,1}(a)$: Minimize } & F(\mathbf{x}) = (x_0 + x_1 + \ldots + x_5)^2 - 1 \\
\text{subject to } & H_1(\mathbf{x}) = x_0^2 + x_1^2 + x_2^2 + \ldots + x_5^2 = 1, \\
& H_2(\mathbf{x}) = 2(x_0 x_1 + x_1 x_2 + \ldots + x_{4} x_5) = a. 
\end{aligned}
\]

The second subcase is when $u_1\neq 0$, the minimization problem corresponding to this subcase is    

\[ 
\begin{aligned} 
\text{$Q_{2,2}(a)$: Minimize } & F(\mathbf{x}) = (x_0 + x_1 + \ldots + x_5)^2 - 1 \\
\text{subject to } & H_1(\mathbf{x}) = x_0^2 + x_1^2 + x_2^2 + \ldots + x_5^2 = 1, \\
& H_2(\mathbf{x}) = 2(x_0x_1 + x_1 x_2 + \ldots + x_{4} x_5) = a, \\
& G_4(\mathbf{x}) = 2x_0 x_5=0.
\end{aligned}
\]

By penalty function method (Theorem \ref{th3.4}), for $i=1,2$, we have
\[
q_{2,i}(a) = \lim_{\mu \to \infty} \theta_i(\mu),
\]
where 
\begin{align*}
\theta_1(\mu) &= \inf \left\{ F(\mathbf{x}) + \mu \left((H_1(\mathbf{x})-1)^2+(H_2(\mathbf{x})-a)^2\right) \mid \mathbf{x} \in \mathbb{R}^6 \right\}, \\
\theta_2(\mu) &= \inf \left\{ F(\mathbf{x}) + \mu \left((H_1(\mathbf{x})-1)^2+(H_2(\mathbf{x})-a)^2+G_4(\mathbf{x})^2\right) \mid \mathbf{x} \in \mathbb{R}^6 \right\}.
\end{align*}

Then, a Matlab-based algorithm is employed to compute the $\di \frac{q_{2,i}(a)}{(\sqrt{a}-1)^2}$ for $i=1,2$. The algorithm utilizes the Newton-Raphson method in conjunction with a penalty function approach to iteratively find the minimizer of the penalized objective function. The detailed Matlab implementation is provided in \ref{Appendix B}.

For each subproblem \( Q_{2,i}(a) \), a graph (see Figure \ref{fig6} and Figure \ref{fig7}) is generated plotting the ratio \(\di \frac{p_{2,i}(a)}{(\sqrt{a} - 1)^2} \) against \( a \). In these graphs, we use different colors to represent the feasible and infeasible solutions to the original problem $Q_2(a)$. The red points are used to indicate infeasible solutions that are the solutions of subproblems but do not satisfy all the constraints of problem $Q_2(a)$. On the other hand, the green points represent feasible solutions that satisfy all constraints.
\begin{figure}[H]
  \centering
  \includegraphics[width=9cm]{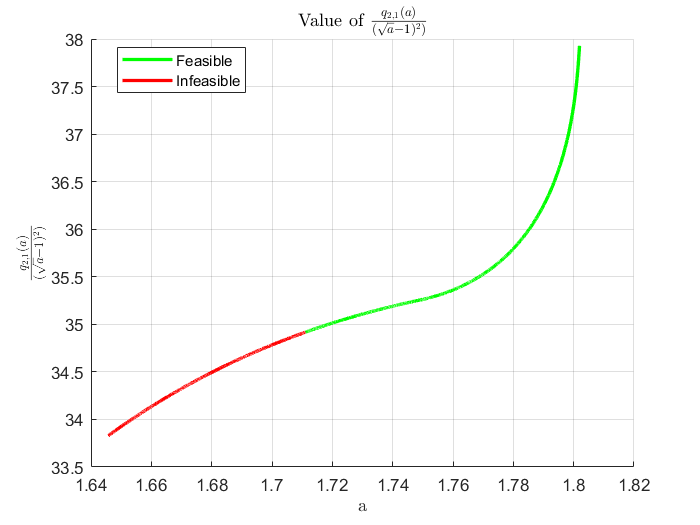}
  \caption{The ratio $\di \frac{q_{2,1}(a)}{(\sqrt{a}-1)^2}$.}
  \label{fig6}
  \end{figure}

  \begin{figure}[H]
    \centering
    \includegraphics[width=9cm]{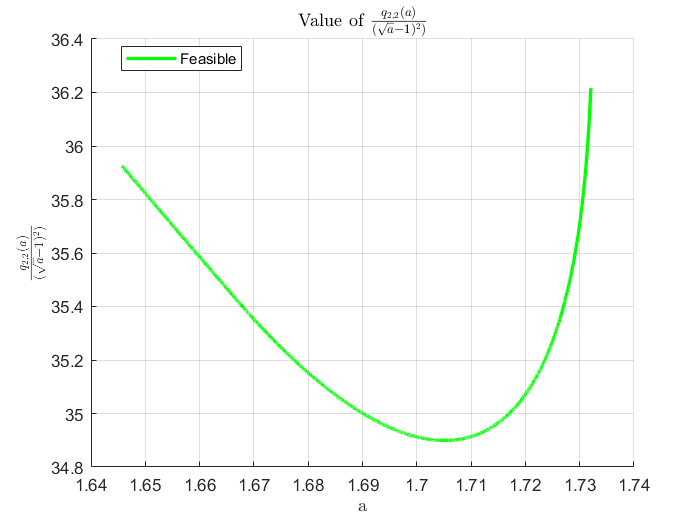}
    \caption{The ratio $\di \frac{q_{2,2}(a)}{(\sqrt{a}-1)^2}$.}
    \label{fig7}
  \end{figure}

Then, the value of $q_2(a)$ could be obtained by comparing the optimal values of the subproblems $q_{2,1}(a)$ and $q_{2,2}(a)$ (only consider the value feasible to the problem $Q_2(a)$). The Figure \ref{fig8} shows the ratio $\di \frac{q_{2}(a)}{(\sqrt{a}-1)^2}$. 

\begin{figure}[H]
  \centering
  \includegraphics[width=9cm]{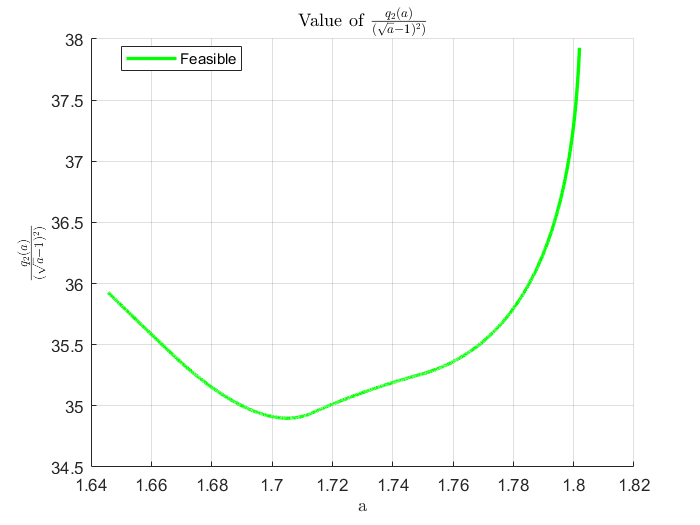}
  \caption{The ratio $\di \frac{q_{2}(a)}{(\sqrt{a}-1)^2}$.}
  \label{fig8}
  \end{figure}

  Then, the infimum of the ratio is determined to be \( 34.8992259 \), which could be achieved at several points, such as 
$$q=[0.2114174,0.5028451,0.6363167,0.5028452,0.2114174,0].$$ 
The polynomial corresponding to this solution is,
$$f(\phi)=a_0+a_1\cos\phi+a_2\cos 2\phi+a_3\cos 3\phi+a_4\cos 4\phi+a_5 \cos 5\phi,$$
where
\begin{align*}
a_0&=1,\\
a_1&=1.7051159,\\ 
a_2&=1.0438202,\\
a_3&=0.4252409,\\
a_4&=0.0893946,\\
a_5&=0.
\end{align*}

One could see that this point $q$ also lies in the feasible region of $Q_1(a_1)$. This implies that the optimal solution of problem $Q_2(a_1)$ is also the optimal solution of problem $Q_1(a_1)$ and thus agrees with the value of $\chi_5(a_1)$. By the relationship 
$$\frac{\chi_5(a)}{(\sqrt{a}-1)^2}\geq \frac{q_2(a)}{(\sqrt{a}-1)^2},$$
one can conclude
\begin{align*}
V_5&=\inf_{a \in \di \left[1.5597515, 2\cos\frac{\pi}{7}\right]} \frac{\chi_5(a)}{(\sqrt{a} - 1)^2}\\
&= \inf_{a \in \di \left[1.5597515, 2\cos\frac{\pi}{7}\right]} \frac{q_2(a)}{(\sqrt{a} - 1)^2}\\
&= 34.8992259.
\end{align*}
This completes the proof.

\end{proof}

\vfil\pagebreak

\section{Exact value of $V_6$.}
In this section, we will determine the exact value of $V_6$. From Equation (\ref{eq1.2}), we have
$$
V_6=\inf _{a \in\left(1, 2\di\cos\frac{\pi}{8}\right]} \frac{\chi_6(a)}{(\sqrt{a}-1)^2},
$$

where $\chi_6(a)$ is defined as
$$
\chi_6(a)=\inf \left\{f(0)-1 \mid f(\phi)\in C_6, a_0\left(f\right)=1 \text { and } a_1\left(f\right)=a\right\}.
$$

\subsection{Exact value of $V_6$.}
In order to find the exact value of $V_6$, we will use the Karush-Kuhn-Tucker conditions and the penalty function method to solve the optimization problem $\chi_6(a)$. However, before proceeding, we first restrict the range of a to a smaller interval.

By a similar proof in Theorem \ref{th5.1}, we have the following theorem.

\begin{theorem}\label{th6.1} The exact value of $V_6$ could be obtained by
$$\di V_6=\inf_{a\in[1.6456659,1.8231801]} \frac{\chi_6(a)}{(\sqrt{a}-1)^2}.$$
\end{theorem}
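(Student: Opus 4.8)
The plan is to mimic the structure already used to prove Theorem \ref{th4.3} and Theorem \ref{th5.1}, now applied to $n=6$. First I would observe that since $C_5 \subset C_6$, we have $V_6 \le V_5 = 34.8992259$, so it suffices to rule out small values of $a$ from contributing to the infimum. As in the earlier proofs, I would invoke the lower bounds from Theorem \ref{th4.1} and Theorem \ref{th4.2}, namely
\begin{align*}
\frac{\chi_6(a)}{(\sqrt{a}-1)^2} &\ge F_1(a) = \frac{2a-1}{(\sqrt{a}-1)^2},\\
\frac{\chi_6(a)}{(\sqrt{a}-1)^2} &\ge F_2(a) = \frac{5.8726781\,a - 6.8726781}{(\sqrt{a}-1)^2},
\end{align*}
valid for $a \in \left(1, 2\cos\frac{\pi}{8}\right]$.

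Next I would reuse the monotonicity analysis: for $F(a) = (Aa-B)/(\sqrt{a}-1)^2$ one has $F'(a) = (B - A\sqrt{a})/\big((\sqrt{a}-1)^3\sqrt{a}\big)$, so $F_1$ (with $A=2>B=1$) is decreasing on $\left(1, 2\cos\frac{\pi}{8}\right]$, while $F_2$ (with $A=5.8726781<B=6.8726781$) increases on $(1, 1.3695543]$ and decreases on $\left[1.3695543, 2\cos\frac{\pi}{8}\right]$. The key computational step is then to locate the threshold $a_0 = 1.6456659$: for $a \in (1, a_1^\star]$ with $a_1^\star$ the point where $F_1(a_1^\star)$ drops to just above $V_5$, feasibility of any $f \in C_6$ with $a_1(f)=a$ forces $v(f) \ge F_1(a) \ge F_1(a_1^\star) > V_6$; and for $a \in [a_1^\star, 1.6456659]$ one uses $F_2$ instead, checking $F_2(1.6456659) > V_6$. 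Since both $F_1$ and $F_2$ are monotone on the relevant subintervals, these are single endpoint evaluations. This is exactly the pattern of Theorem \ref{th5.1}, with only the Fejér endpoint $2\cos\frac{\pi}{8} = 1.8477590$ changed and $V_5$ replaced by the (still equal) bound $34.8992259$; in fact the threshold $1.6456659$ is the same one appearing in Theorem \ref{th5.1}, so the numerics carry over verbatim.

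The main obstacle — really the only substantive point — is confirming that the numerical endpoint evaluations $F_1(a_1^\star) > 34.8992259$ and $F_2(1.6456659) = 34.8992261 > 34.8992259$ hold with the stated precision, so that no $a$ below $1.6456659$ can host a minimizer; everything else is bookkeeping identical to the earlier sections. One should also note the upper endpoint of the restricted interval, $1.8231801$, rather than the full Fejér bound $2\cos\frac{\pi}{8} = 1.8477590$; I would expect this tighter upper cutoff to come from the same $F_1/F_2$ comparison applied near the upper end or from a companion bound, and would verify it analogously by a monotonicity-plus-endpoint argument. With those numerical checks in hand, the conclusion $V_6 = \inf_{a \in [1.6456659, 1.8231801]} \chi_6(a)/(\sqrt{a}-1)^2$ follows immediately.
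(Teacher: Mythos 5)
Your treatment of the lower cutoff $1.6456659$ matches the paper's proof exactly: the containment $C_5\subset C_6$ gives $V_6\le V_5=34.8992259$, and the two bounds $F_1$ and $F_2$ from Theorem \ref{th4.1} and Theorem \ref{th4.2}, combined with the monotonicity analysis of $F(a)=(Aa-B)/(\sqrt{a}-1)^2$, exclude $a\in(1,1.5510971]$ via $F_1(1.5510971)=34.8992605$ and $a\in[1.5510971,1.6456659]$ via $F_2(1.6456659)=34.8992261$. That part is fine.

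The gap is at the upper cutoff $1.8231801$. You suggest it might come from ``the same $F_1/F_2$ comparison applied near the upper end,'' but that cannot work: both $F_1$ and $F_2$ are \emph{decreasing} on the upper portion of $\left(1,2\cos\frac{\pi}{8}\right]$, so they only get weaker as $a$ grows and cannot exclude the segment $\left[1.8231801,\,2\cos\frac{\pi}{8}\right]$. The paper instead uses the second inequality of Theorem \ref{th4.2}, $\chi_n(a)\ge 16.5\,a-25.8011608$, giving
\[
F_3(a)=\frac{16.5\,a-25.8011608}{(\sqrt{a}-1)^2},
\]
which by the same derivative formula is \emph{increasing} on all of $\left(1,2\cos\frac{\pi}{8}\right]$ (since $(B/A)^2=(25.8011608/16.5)^2\approx 2.445$ lies beyond $2\cos\frac{\pi}{8}\approx 1.8478$). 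Hence for $a\in\left[1.8231801,2\cos\frac{\pi}{8}\right]$ one has $v(f)\ge F_3(a)\ge F_3(1.8231801)=34.8992630>V_6$, which is what justifies truncating the interval at $1.8231801$. Your proposal names neither this inequality nor the increasing-bound mechanism, so as written the upper endpoint is unestablished; once you add the $F_3$ argument, the proof coincides with the paper's.
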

\begin{proof}
  Since the set of cosine polynomials $C_5 \subset C_6$, one have $V_6 \leq V_5=$ 34.8992259. However, by Theorem 4.1 and Theorem 4.2 , we have

  \begin{align*}
   &V_6(a) \geq F_1(a)=\frac{2 a-1}{(\sqrt{a}-1)^2} \\
   &V_6(a) \geq F_2(a)=\frac{5.87267810 a-6.87267810}{(\sqrt{a}-1)^2} \\
  &V_6(a) \geq F_3(a)=\frac{16.5 a-25.8011608}{(\sqrt{a}-1)^2}
  \end{align*}

  for $\di a \in\left(1,2 \cos \frac{\pi}{8}\right]$. Given function $F:(1, \infty) \rightarrow \mathbb{R}$ with the form
  
  $$
  F(a)=\frac{A a-B}{(\sqrt{a}-1)^2}
  $$
  
  where $A, B$ are constant. Taking the derivative, one have
  
  $$
  F^{\prime}(a)=\frac{B-A \sqrt{x}}{(\sqrt{x}-1)^3 \sqrt{x}}
  $$
  This implies when $A>B$ the function $F(a)$ is decreasing on the interval $(1, \infty)$. On the other hand, when $A<B$, the function increasing on the interval $\left(1,(B / A)^2\right]$ and decreasing on the interval $\left[(B / A)^2, \infty\right)$. Apply this on the function $F_1(a), F_2(a)$ and $F_3(a)$, we found out $F_1(a)$ is a decreasing function on the interval $\di\left(1,2 \cos \frac{\pi}{8}\right]$. However, the function $F_2(a)$ is increasing on $(1,1.3695543]$ and is decreasing on the interval $\di \left[1.3695543,2 \cos \frac{\pi}{8}\right]$. At the same time, the function $F_3(a)$ is increasing on the interval $\di\left(1,2 \cos \frac{\pi}{8}\right]$.
Now, given any $f(\phi) \in C_6$ with $a_0(f)=1, a_1(f)=a \in(1,1.5510971]$, one has

$$
v(f) \geq \frac{\chi(a)}{(\sqrt{a}-1)^2} \geq F_1(a) \geq F_1(1.5510971)=34.8992605>V_5 .
$$

Similarly, for any $f(\phi) \in C_5$ with $a_0(f)=1, a_1(f)=a \in[1.5510971,1.6456659]$, one could obtain

$$
v(f) \geq \frac{\chi(a)}{(\sqrt{a}-1)^2} \geq F_2(a) \geq F_2(1.6456659)=34.8992261>V_5
$$

Finally, for any $\di f(\phi) \in C_6$ with $\di a_0(f)=1, a_1(f)=a \in\left[1.8231801,2 \cos \frac{\pi}{8}\right]$, one could obtain

$$
v(f) \geq \frac{\chi(a)}{(\sqrt{a}-1)^2} \geq F_3(a) \geq F_3(1.8231801)=34.8992630>V_6
$$

This completes the proof.

\end{proof}

By Fejer-Reisz theorem theorem, the problem of finding the $\chi_6(a)$ for $\di a\in\left(1, 2\di\cos\frac{\pi}{8}\right]$ can be formalized as the minimization problem in $\mathbb{R}^7$ as:
\begin{problem}\label{thm:optimization_infimum}
  Consider the optimization problem \( R_1(a) \) defined as follows:

\[
\begin{aligned}
\text{$R_1(a)$: Minimize } & F(\mathbf{x}) = (x_0 + x_1 + \ldots + x_6)^2 - 1 \\
\text{subject to } & H_1(\mathbf{x}) = x_0^2 + x_1^2 + x_2^2 + \ldots + x_6^2 = 1, \\
& H_2(\mathbf{x}) = 2(x_0 x_1 + x_1 x_2 + \ldots + x_{5} x_6) = a, \\
& G_1(\mathbf{x}) = 2(x_0 x_2 + x_1 x_3 + \ldots+ x_{4} x_6) \geq 0, \\
& G_2(\mathbf{x}) = 2(x_0 x_3 + x_1 x_4 + \ldots+ x_{3} x_6) \geq 0, \\
& \quad\quad\quad\vdots \\
& G_{5}(\mathbf{x}) = 2x_0 x_6 \geq 0.
\end{aligned}
\]

Define \( r_1(a) \) as the optimal value of the objective function \( F(\mathbf{x}) \) for a given \( a \):
  
  \[
  r_1(a) = \min_{\mathbf{x}} F(\mathbf{x}) \quad \text{subject to the constraints of } R_1(a).
  \]
  \end{problem}

However, instead of consider Problem $R_1(a)$ with seven constraints, we will consider following problem with four constraints.

\begin{problem}\label{thm:optimization_infimum}
  Consider the optimization problem \( R_2(a) \) defined as follows:
  
  \[
  \begin{aligned}
  \text{Minimize } & F(\mathbf{x}) = (x_0 + x_1 + \ldots + x_6)^2 - 1 \\
  \text{subject to } & H_1(\mathbf{x}) = \sum_{i=0}^{6} x_i^2 = 1, \\
  & H_2(\mathbf{x}) = 2\left( x_0x_1 + x_1x_2 + \ldots + x_5x_6 \right) = a, \\
  & G_4(\mathbf{x}) = 2\left( x_0x_5 + x_1x_6  \right) \geq 0, \\
  & G_5(\mathbf{x}) = 2 x_0x_6  \geq 0.
  \end{aligned}
  \]
  
  Define \( r_2(a) \) as the optimal value of the objective function \( F(\mathbf{x}) \) for a given \( a \):
  
  \[
  r_2(a) = \min_{\mathbf{x}} F(\mathbf{x}) \quad \text{subject to the constraints of } R_2(a).
  \]
  \end{problem}

Since the feasible region of problem $R_1(a)$ is the subset of the feasible region of problem $R_2(a)$, this implies
$$\chi_6(a)=r_1(a)\geq r_2(a).$$
If the optimal point of problem $R_2(a)$ also satisfies the condition $G_1(\mathbf{x}), G_2(\mathbf{x})\text{ and } G_3(\mathbf{x})$, then the optimal point will lie inside the feasible region of problem $R_1(a)$. In this case,
$$\chi_6(a)=r_1(a)=r_2(a).$$

\begin{theorem}
The exact value of \( V_6 \) can be expressed as:

\[
\di V_6=\inf_{a \in [1.6456659,1.8231801]} \frac{r_2(a)}{(\sqrt{a} - 1)^2} = 34.8992259.
\]

The exact solution corresponding to this infimum could be obtained at the point
$$r=[0.2114174,0.5028451,0.6363167,0.5028452,0.2114174,0,0].$$ 
 The polynomial corresponding to this solution is,
$$f(\phi)=a_0+a_1\cos\phi+a_2\cos 2\phi+a_3\cos 3\phi+a_4\cos 4\phi+a_5 \cos 5\phi+a_6\cos 6\phi,$$
where
\begin{align*}
a_0&=1,\\
a_1&=1.7051159,\\ 
a_2&=1.0438202,\\
a_3&=0.4252409,\\
a_4&=0.0893946,\\
a_5&=0,\\
a_6&=0.
\end{align*}
\end{theorem}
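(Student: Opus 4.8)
The plan is to reuse, essentially verbatim, the machinery developed for $V_4$ and $V_5$: restrict the range of $a$, replace $\chi_6(a)$ by the relaxed problem $R_2(a)$, solve $R_2(a)$ through the Karush--Kuhn--Tucker conditions together with the penalty function method, compute $r_2(a)/(\sqrt a-1)^2$ numerically, and then confirm that the numerical minimizer is feasible for the original problem $R_1(a)$ so that it actually attains $\chi_6(a)$.

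First I would invoke Theorem \ref{th6.1} to restrict attention to $a \in [1.6456659, 1.8231801]$ and fix such an $a$. Problem $R_2(a)$ admits a global minimizer $\overline{\mathbf{x}} \in \mathbb{R}^7$ by the Weierstrass theorem (Theorem \ref{th3.2}), so the Karush--Kuhn--Tucker conditions (Theorem \ref{th3.3}) produce multipliers $\lambda_1, \lambda_2$ and $u_1, u_2 \ge 0$ obeying stationarity and the complementary slackness relations $u_1 G_4(\overline{\mathbf{x}}) = 0$ and $u_2 G_5(\overline{\mathbf{x}}) = 0$. Distinguishing, for each of $G_4$ and $G_5$, the active case (the constraint becomes an equality) from the inactive case (the multiplier is set to zero and the constraint discarded) breaks $R_2(a)$ into the four subproblems $R_{2,1}(a), \ldots, R_{2,4}(a)$, and Theorem \ref{th3.4} expresses each optimal value as $r_{2,i}(a) = \lim_{\mu \to \infty} \theta_i(\mu)$, where $\theta_i(\mu)$ minimizes $F(\mathbf{x})$ plus $\mu$ times the sum of squared violations of $H_1$, $H_2$ and of whichever of $G_4, G_5$ are active in case $i$.

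Next I would use the Matlab routine of \ref{Appendix B} --- Newton--Raphson applied to the penalized objective, with $\mu \to \infty$ --- to tabulate $r_{2,i}(a)/(\sqrt a - 1)^2$ over the interval for $i = 1, \ldots, 4$, colouring each sampled point according to whether it is feasible for $R_2(a)$, and then take $r_2(a)$ to be the smallest feasible value among the subproblems. Scanning $r_2(a)/(\sqrt a - 1)^2$ across $[1.6456659, 1.8231801]$ should return the infimum $34.8992259$, attained at $r = [0.2114174, 0.5028451, 0.6363167, 0.5028452, 0.2114174, 0, 0]$ --- the same extremal vector found for $V_4$ and $V_5$, now padded with two zeros, corresponding to $a_0 = 1$, $a_1 = 1.7051159$, $a_2 = 1.0438202$, $a_3 = 0.4252409$, $a_4 = 0.0893946$, $a_5 = a_6 = 0$. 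Since this point also satisfies $G_1(\mathbf{x}), G_2(\mathbf{x}), G_3(\mathbf{x}) \geq 0$, it lies in the feasible region of $R_1(a)$, so $\chi_6(a) = r_1(a) = r_2(a)$ there; combined with the general inequality $\chi_6(a)/(\sqrt a - 1)^2 \ge r_2(a)/(\sqrt a - 1)^2$ and with Theorem \ref{th6.1}, this forces $V_6 = 34.8992259$.

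The main obstacle is again the gap between a finite numerical search and a rigorous infimum claim: one must know that $r_2(a)/(\sqrt a - 1)^2$ is continuous on the compact interval so that the sampled values control the true infimum, that the penalty-function limits $\lim_{\mu\to\infty} 2\mu h_i(\mathbf{x}_\mu)$ and $\lim_{\mu\to\infty} 2\mu g_i(\mathbf{x}_\mu)$ exist so the KKT certificate produced by the penalty method is legitimate, and that the four active/inactive combinations exhaust all KKT scenarios. All remaining steps --- expanding the objective at the candidate point, checking $H_1$, $H_2$ and the five $G$-constraints, and reading off the coefficients $a_k$ --- are routine once these structural facts are secured.
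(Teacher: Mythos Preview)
Your proposal is correct and follows essentially the same approach as the paper: restrict $a$ via Theorem \ref{th6.1}, apply KKT to $R_2(a)$ to split into the four subproblems $R_{2,1},\ldots,R_{2,4}$ by complementary slackness on $G_4,G_5$, solve each via the penalty-function limit and the Matlab Newton--Raphson routine, take $r_2(a)$ as the minimum feasible subproblem value, identify the infimum $34.8992259$ at the stated point $r$, and then verify that $r$ is feasible for $R_1(a_1)$ so that $\chi_6(a_1)=r_2(a_1)$. Your closing paragraph flagging the continuity and limit-existence issues is an honest acknowledgement of gaps that the paper's own proof also leaves implicit.
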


\begin{proof}
Fix an $a\in [1.64566600, 1.82318005]$ ,we would first like to establish the numerical solution of the optimization problem \( R_2(a) \). By applying  the Karush-Kuhn-Tucker conditions (Theorem \ref{th3.3}), if a point $\overline{\mathbf{x}}\in \mathbb{R}^7$ solves Problem $R_2(a)$ globally, there exist scalars $\lambda_1,\lambda_2,u_1,$ and $u_2$  such that

\begin{align}
\nabla F(\overline{\mathbf{x}})+\sum_{i=1}^2 \lambda_i \nabla H_i(\overline{\mathbf{x}})+\sum_{i=1}^2 u_i \nabla G_{3+i}(\overline{\mathbf{x}}) & =\mathbf{0} & \\
u_i G_{3+i}(\overline{\mathbf{x}}) & =0 & \text { for } i=1, 2\label{eq6.4} \\
u_i & \geq 0 & \text { for } i=1, 2
\end{align}

By the Equation (\ref{eq6.4}), the problem $R_2(a)$ could be divided into following four subproblems, we call them problem $R_{2,1}(a),R_{2,2}(a),R_{2,3}(a)$ and $R_{2,4}(a)$ respectively. We will use $r_{2,1}(a),r_{2,2}(a),r_{2,3}(a)$ and $r_{2,4}(a)$ to denote the solutions of these subproblems respecively. The first subcase is when $u_1=0, u_2=0$, the minimization problem corresponding to this subcase is

\[
\begin{aligned}
\text{$R_{2,1}(a)$: Minimize } & F(\mathbf{x}) = (x_0 + x_1 + \ldots + x_6)^2 - 1 \\
\text{subject to } & H_1(\mathbf{x}) = x_0^2 + x_1^2 + x_2^2 + \ldots + x_6^2 = 1, \\
& H_2(\mathbf{x}) = 2(x_0 x_1 + x_1 x_2 + \ldots + x_{5} x_6) = a. 
\end{aligned}
\]

The second subcase is when $u_1\neq 0, u_2=0$, the minimization problem corresponding to this subcase is    

\[ 
\begin{aligned} 
\text{$R_{2,2}(a)$: Minimize } & F(\mathbf{x}) = (x_0 + x_1 + \ldots + x_6)^2 - 1 \\
\text{subject to } & H_1(\mathbf{x}) = x_0^2 + x_1^2 + x_2^2 + \ldots + x_6^2 = 1, \\
& H_2(\mathbf{x}) = 2(x_0 x_1 + x_1 x_2 + \ldots + x_{5} x_6) = a, \\
& G_4(\mathbf{x}) = 2(x_0 x_5 + x_1 x_6)=0.
\end{aligned}
\]

The third subcase is when $u_1=0, u_2\neq 0$, the minimization problem corresponding to this subcase is

\[
\begin{aligned}
\text{$R_{2,3}(a)$: Minimize } & F(\mathbf{x}) = (x_0 + x_1 + \ldots + x_6)^2 - 1 \\
\text{subject to } & H_1(\mathbf{x}) = x_0^2 + x_1^2 + x_2^2 + \ldots + x_6^2 = 1, \\
& H_2(\mathbf{x}) = 2(x_0 x_1 + x_1 x_2 + \ldots + x_{5} x_6) = a, \\
& G_5(\mathbf{x}) = 2x_0 x_6 =0.
\end{aligned}
\]

The fourth subcase is when $u_1\neq 0, u_2\neq 0$, the minimization problem corresponding to this subcase is

\[
\begin{aligned}
\text{$R_{2,4}(a)$: Minimize } & F(\mathbf{x}) = (x_0 + x_1 + \ldots + x_6)^2 - 1 \\
\text{subject to } & H_1(\mathbf{x}) = x_0^2 + x_1^2 + x_2^2 + \ldots + x_6^2 = 1, \\
& H_2(\mathbf{x}) = 2(x_0 x_1 + x_1 x_2 + \ldots + x_{5} x_6) = a, \\
& G_4(\mathbf{x}) = 2(x_0 x_5 + x_1 x_6)=0, \\
& G_5(\mathbf{x}) = 2x_0 x_6 =0.
\end{aligned}
\]

By penalty function method (Theorem \ref{th3.4}), for $i=1,2,3,4$, we have
\[
r_{2,i}(a) = \lim_{\mu \to \infty} \theta_i(\mu),
\]
where 
\begin{align*}
\theta_1(\mu) &= \inf \left\{ F(\mathbf{x}) + \mu \left((H_1(\mathbf{x})-1)^2+(H_2(\mathbf{x})-a)^2\right) \mid \mathbf{x} \in \mathbb{R}^7 \right\}, \\
\theta_2(\mu) &= \inf \left\{ F(\mathbf{x}) + \mu \left((H_1(\mathbf{x})-1)^2+(H_2(\mathbf{x})-a)^2+G_4(\mathbf{x})^2\right) \mid \mathbf{x} \in \mathbb{R}^7 \right\},\\
\theta_3(\mu) &= \inf \left\{ F(\mathbf{x}) + \mu \left((H_1(\mathbf{x})-1)^2+(H_2(\mathbf{x})-a)^2+G_5(\mathbf{x})^2\right) \mid \mathbf{x} \in \mathbb{R}^7 \right\},\\
\theta_4(\mu) &= \inf \left\{ F(\mathbf{x}) + \mu \left((H_1(\mathbf{x})-1)^2+(H_2(\mathbf{x})-a)^2+\sum_{i=1}^2 G_{3+i}(\mathbf{x})^2\right) \mid \mathbf{x} \in \mathbb{R}^7 \right\}.
\end{align*}

Then, a Matlab-based algorithm is employed to compute the $\di \frac{r_{2,i}(a)}{(\sqrt{a}-1)^2}$ for $i=1,2,3,4$. The algorithm employs the Newton-Raphson method, combined with a penalty function approach to iteratively minimize the penalized objective function (see \ref{Appendix B} for the full Matlab implementation).

For each subproblem \( R_{2,i}(a) \), a graph (see Figure\ref{fig9}, Figure\ref{fig10}, Figure\ref{fig11} and Figure\ref{fig12}) is generated plotting the ratio \(\di \frac{r_{2,i}(a)}{(\sqrt{a} - 1)^2} \) against \( a \). In these graphs, different colors are used to distinguish between feasible and infeasible solutions to the original problem $R_2(a)$. Red points indicate infeasible solutions, which solve subproblems but fail to meet all the constraints, while green points represent feasible solutions that satisfy all constraints.
\begin{figure}[H]
  \centering
  \includegraphics[width=9cm]{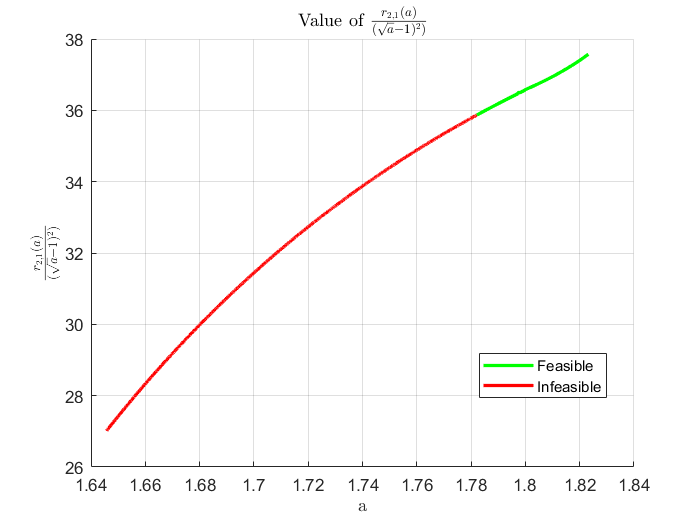}
  \caption{The ratio $\di \frac{r_{2,1}(a)}{(\sqrt{a}-1)^2}$.}
  \label{fig9}
  \end{figure}

  \begin{figure}[H]
    \centering
    \includegraphics[width=9cm]{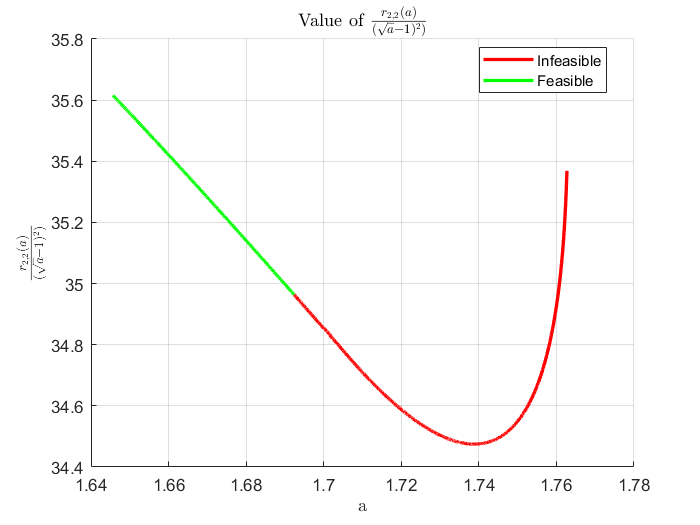}
    \caption{The ratio $\di \frac{r_{2,2}(a)}{(\sqrt{a}-1)^2}$.}
    \label{fig10}
  \end{figure}

    \begin{figure}[H]
      \centering
      \includegraphics[width=9cm]{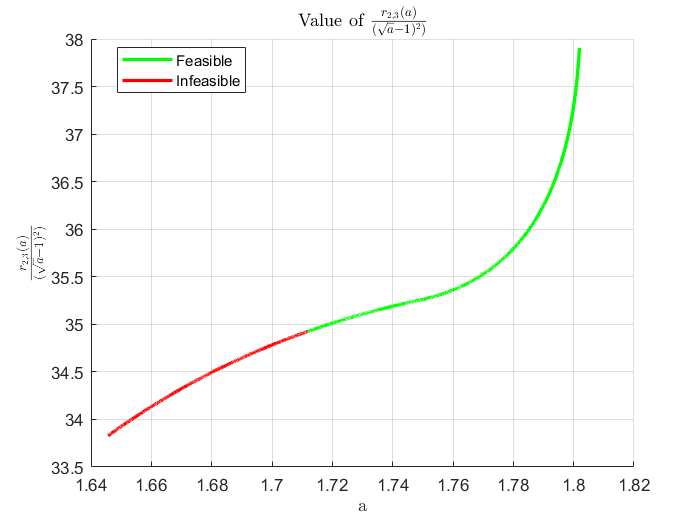}
      \caption{The ratio $\di \frac{r_{2,3}(a)}{(\sqrt{a}-1)^2}$.}
      \label{fig11}
      \end{figure}

\begin{figure}[H]
        \centering
        \includegraphics[width=9cm]{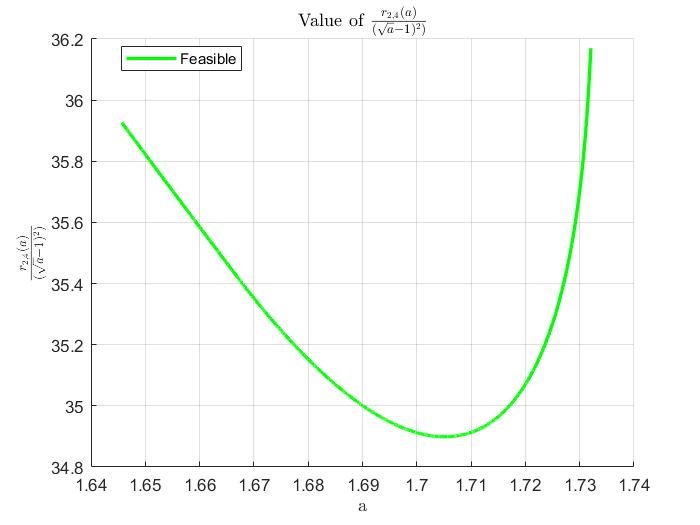}
        \caption{ The ratio $\di \frac{r_{2,4}(a)}{(\sqrt{a}-1)^2}$.}
        \label{fig12}
        \end{figure}
      
Among all feasible solutions for each \( a \), \( r_2(a) \) is identified as the minimum value of all $r_{2,i}(a)$ that are feasible to the problem $R_2(a)$. Figure \ref{fig13} shows the graph plotting the ratio of the function $\frac{r_{2}(a)}{(\sqrt{a}-1)^2}$.

\begin{figure}[H]
  \centering
  \includegraphics[width=9cm]{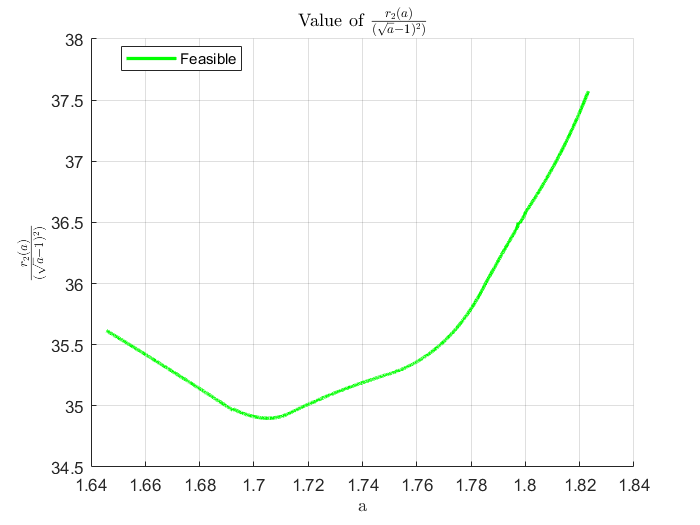}
  \caption{The ratio $\di \frac{r_{2}(a)}{(\sqrt{a}-1)^2}$.}
  \label{fig13}
  \end{figure}
  
  Then, the infimum of the ratio is determined to be \( 34.899226 \), achieved at a specific point in \( \mathbb{R}^7 \). This point is given by 
$$r=[0.2114174,0.5028451,0.6363167,0.5028452,0.2114174,0,0].$$ 
 The polynomial corresponding to this point is,
$$f(\phi)=a_0+a_1\cos\phi+a_2\cos 2\phi+a_3\cos 3\phi+a_4\cos 4\phi+a_5 \cos 5\phi+a_6\cos 6\phi,$$
where
\begin{align*}
a_0&=1,\\
a_1&=1.7051159,\\ 
a_2&=1.0438202,\\
a_3&=0.4252409,\\
a_4&=0.0893946,\\
a_5&=0,\\
a_6&=0.
\end{align*}

One could see that this point $r$ also lies in the feasible region of $R_1(a_1)$. This implies that the optimal solution of problem $R_2(a_1)$ is also the optimal solution of problem $R_1(a_1)$ and thus agrees with the value of $\chi_6(a_1)$. By the relationship 
$$\frac{\chi_6(a)}{(\sqrt{a}-1)^2}\geq \frac{r_2(a)}{(\sqrt{a}-1)^2},$$
one can conclude
\begin{align*}
V_6&=\inf_{a \in \di [1.6456659,1.8231801]} \frac{\chi_6(a)}{(\sqrt{a} - 1)^2}\\
&= \inf_{a \in \di [1.6456659,1.8231801]} \frac{r_2(a)}{(\sqrt{a} - 1)^2}\\
&= 34.8992259.
\end{align*}
This completes the proof.

\end{proof}
\vfil\pagebreak

\section{Exact value of $V_7$.}
This section will be devoted to find the exact value of $V_7$. First of all, by Equation (\ref{eq1.2}), we have
$$
\di V_7=\inf _{a \in\left(1, 2\di\cos\frac{\pi}{9}\right]} \frac{\chi_7(a)}{(\sqrt{a}-1)^2},
$$
where
$$\chi_7(a)=\inf \left\{f(0)-1 \mid f(\phi)\in C_7, a_0\left(f\right)=1 \text { and } a_1\left(f\right)=a\right\} \text {. }$$

\subsection{Exact value of $V_7$.}

In order to find the exact value of $V_7$, we will use the Karush-Kuhn-Tucker conditions and the penalty function method to solve the optimization problem $\chi_7(a)$. Before proceeding, we first restrict the range of a to a smaller interval.

By the same proof in Theorem \ref{th6.1}, we have the following theorem.

\begin{theorem}\label{th7.1} The exact value of $V_7$ could be obtained by
  $$\di V_7=\inf_{a \in a\in[1.6456659,1.8231801]} \frac{\chi_7(a)}{(\sqrt{a}-1)^2}.$$
  \end{theorem}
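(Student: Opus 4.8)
The plan is to reproduce, almost verbatim, the argument used for Theorem \ref{th6.1}, combining the nesting $C_6 \subset C_7$ with the linear lower bounds for $\chi_n(a)$ from Theorems \ref{th4.1} and \ref{th4.2} and the elementary monotonicity of the auxiliary function $F(a) = \frac{Aa-B}{(\sqrt{a}-1)^2}$. First I would record that $C_6 \subset C_7$ gives $V_7 \le V_6 = 34.8992259$, so that this number is the benchmark a ratio $\frac{\chi_7(a)}{(\sqrt{a}-1)^2}$ must beat to be relevant to the infimum. Then, for any $f(\phi) \in C_7$ with $a_0(f)=1$ and $a_1(f)=a$, Theorems \ref{th4.1} and \ref{th4.2} yield
\[
v(f) \ge \frac{\chi_7(a)}{(\sqrt{a}-1)^2} \ge F_i(a), \qquad i=1,2,3,
\]
on $\left(1, 2\cos\frac{\pi}{9}\right]$, where $F_1(a) = \frac{2a-1}{(\sqrt{a}-1)^2}$, $F_2(a) = \frac{5.8726781\,a - 6.8726781}{(\sqrt{a}-1)^2}$, and $F_3(a) = \frac{16.5\,a - 25.8011608}{(\sqrt{a}-1)^2}$.

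The next step is the monotonicity bookkeeping using $F'(a) = \frac{B - A\sqrt{a}}{(\sqrt{a}-1)^3\sqrt{a}}$: since $A>B$ for $F_1$, it is decreasing on $\left(1, 2\cos\frac{\pi}{9}\right]$; for $F_2$ one has $(B/A)^2 = 1.3695543$, so $F_2$ increases on $(1, 1.3695543]$ and decreases on $\left[1.3695543, 2\cos\frac{\pi}{9}\right]$; and for $F_3$ one has $(B/A)^2 = (25.8011608/16.5)^2$, which I would check exceeds $2\cos\frac{\pi}{9}$, so that $F_3$ is increasing on the whole of $\left(1, 2\cos\frac{\pi}{9}\right]$. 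This last verification is the only genuinely new point relative to the $V_6$ argument, since the abscissa range has widened from $\left(1, 2\cos\frac{\pi}{8}\right]$ to $\left(1, 2\cos\frac{\pi}{9}\right]$.

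Finally I would split $\left(1, 2\cos\frac{\pi}{9}\right]$ into three pieces and apply the tightest bound on each: for $a \in (1, 1.5510971]$, $v(f) \ge F_1(a) \ge F_1(1.5510971) = 34.8992605 > V_7$; for $a \in [1.5510971, 1.6456659]$, $v(f) \ge F_2(a) \ge F_2(1.6456659) = 34.8992261 > V_7$; and for $a \in \left[1.8231801, 2\cos\frac{\pi}{9}\right]$, $v(f) \ge F_3(a) \ge F_3(1.8231801) = 34.8992630 > V_7$. Since none of these three stretches can host the infimum, it must be realised over $[1.6456659, 1.8231801]$, which is the claimed identity.

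I do not expect a serious obstacle here: the proof is essentially a transcription of the proof of Theorem \ref{th6.1}. The only items needing real care are numerical — confirming that the three endpoint values $F_1(1.5510971)$, $F_2(1.6456659)$, $F_3(1.8231801)$ all strictly exceed $34.8992259$, and confirming that the crossover point $1.3695543$ at which $F_2$ peaks and the inequality $(25.8011608/16.5)^2 > 2\cos\frac{\pi}{9}$ still hold on the slightly enlarged interval relevant to $C_7$.
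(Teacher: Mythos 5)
Your proposal is correct and is exactly the argument the paper intends: the paper's own proof of Theorem \ref{th7.1} is just the remark ``by the same proof in Theorem \ref{th6.1},'' and you have carried out that transcription faithfully, including the one genuinely new check that $(25.8011608/16.5)^2 \approx 2.445$ exceeds $2\cos\frac{\pi}{9} \approx 1.879$ so that $F_3$ remains increasing on the enlarged interval. The endpoint values $34.8992605$, $34.8992261$, $34.8992630$ all exceed the benchmark $V_6 = 34.8992259 \ge V_7$, so the three excluded stretches cannot host the infimum, as claimed.
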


By Fejer-Reisz theorem theorem, the problem of finding the $\chi_7(a)$ for $\di a\in\left(1, 2\di\cos\frac{\pi}{9}\right]$ can be formalised as the following minimization problem in $\mathbb{R}^8$. 

\begin{problem}  Consider the optimization problem \( S_1(a) \) defined as follows:
\[
\begin{aligned}
\text{$S_1(a)$: Minimize } & F(\mathbf{x}) = (x_0 + x_1 + \ldots + x_7)^2 - 1 \\
\text{subject to } & H_1(\mathbf{x}) = x_0^2 + x_1^2 + x_2^2 + \ldots + x_7^2 = 1, \\
& H_2(\mathbf{x}) = 2(x_0 x_1 + x_1 x_2 + \ldots + x_{6} x_7) = a, \\
& G_1(\mathbf{x}) = 2(x_0 x_2 + x_1 x_3 + \ldots+ x_{5} x_7) \geq 0, \\
& G_2(\mathbf{x}) = 2(x_0 x_3 + x_1 x_4 + \ldots+ x_{4} x_7) \geq 0, \\
& \quad\quad\quad\vdots \\
& G_{6}(\mathbf{x}) = 2x_0 x_7 \geq 0.
\end{aligned}
\]

Define $s_1(a)$ as the optimal value of the objective function $F(\mathbf{x})$ for a given $a$:

\[
s_1(a) = \min_{\mathbf{x}} F(\mathbf{x}) \quad \text{subject to the constraints of } S_1(a).
\]
\end{problem}

However, instead of consider Problem $S_1(a)$ with eight constraints, we will consider following problem with four constraints.

\begin{problem}\label{thm:optimization_infimum}
Consider the optimization problem \( S_2(a) \) defined as follows:

\[
\begin{aligned}
\text{$S_2(a)$: Minimize } & F(\mathbf{x}) = (x_0 + x_1 + \ldots + x_7)^2 - 1 \\
\text{subject to } & H_1(\mathbf{x}) = x_0^2 + x_1^2 + x_2^2 + \ldots + x_7^2 = 1, \\
& H_2(\mathbf{x}) = 2(x_0 x_1 + x_1 x_2 + \ldots + x_{6} x_7) = a, \\
& G_4(\mathbf{x}) = 2(x_0 x_5 + x_1 x_6+x_2x_7 ) \geq 0, \\
& G_5(\mathbf{x}) = 2(x_0 x_6 + x_1 x_7) \geq 0.
\end{aligned}
\]

Define \( s_2(a) \) as the optimal value of the objective function \( F(\mathbf{x}) \) for a given \( a \):

\[
s_2(a) = \min_{\mathbf{x}} F(\mathbf{x}) \quad \text{subject to the constraints of } S_2(a).
\]
\end{problem}

Since the feasible region of problem $S_1(a)$ is the subset of the feasible region of problem $S_2(a)$, this implies
$$\chi_7(a)=s_1(a)\geq s_2(a).$$
If the optimal point of problem $S_2(a)$ also satisfies the condition $G_1(\mathbf{x}), G_2(\mathbf{x}), G_3(\mathbf{x}),\text{ and } G_6(\mathbf{x})$, then the optimal point will lie inside the feasible region of problem $S_1(a)$. In this case,
$$\chi_7(a)=s_1(a)=s_2(a).$$

\begin{theorem}
The exact value of \( V_7 \) can be expressed as:

$$\di V_7=\inf_{a \in a\in[1.6456659,1.8231801]} \frac{\chi_7(a)}{(\sqrt{a}-1)^2}=34.6494874.$$
The exact solution corresponding to this infimum could be obtained at the point
\begin{align*}
s&=[0.1685903,0.4506317,0.6267577,0.5454191,\\
&0.2756348,0.0361770,-0.0282171,0.1055656].
\end{align*}
 The polynomial corresponding to this solution is,
\begin{align*}
 f(\phi)=&a_0+a_1\cos\phi+a_2\cos 2\phi+a_3\cos 3\phi+a_4\cos 4\phi\\ &+a_5 \cos 5\phi+a_6\cos 6\phi+a_7\cos 7\phi,
\end{align*}
 where
\begin{align*}
a_0&=1,\\
a_1&=1.7185098,\\ 
a_2&=1.0731034,\\
a_3&=0.4527292,\\
a_4&=0.1016950,\\
a_5&=0,\\
a_6&=0,\\
a_7&=0.0035595.
\end{align*}

\end{theorem}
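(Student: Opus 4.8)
The plan is to follow the template already used for $V_4$, $V_5$ and $V_6$. First I would invoke Theorem~\ref{th7.1} to replace the optimization over the full Fejér interval $\bigl(1,2\cos\frac{\pi}{9}\bigr]$ by the optimization of $\chi_7(a)/(\sqrt a-1)^2$ over the compact interval $[1.6456659,1.8231801]$, so that only a bounded range of $a$ must be examined. By the Fejér--Riesz theorem (Theorem~\ref{th3.1}) together with the identity $f(0)=(x_0+\cdots+x_7)^2-1$, the quantity $\chi_7(a)$ coincides with $s_1(a)$, the optimal value of Problem $S_1(a)$ in $\mathbb{R}^8$. Because $S_1(a)$ carries the six inequality constraints $G_1,\dots,G_6\ge 0$, a direct enumeration of all $2^6$ KKT cases is unwieldy; instead I would work with the relaxation $S_2(a)$, which retains only $G_4,G_5\ge 0$, so that $s_1(a)\ge s_2(a)$ with equality as soon as the $S_2$-minimizer also satisfies $G_1,G_2,G_3,G_6\ge 0$.

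Next I would apply the Karush--Kuhn--Tucker necessary conditions (Theorem~\ref{th3.3}) to $S_2(a)$. Existence of a global minimizer is guaranteed by Theorem~\ref{th3.2}, Slater's condition holds (a symmetric feasible point makes $G_4,G_5>0$), and the constraint gradients are generically independent. Complementary slackness, $u_1G_4(\overline{\mathbf{x}})=u_2G_5(\overline{\mathbf{x}})=0$, then decomposes $S_2(a)$ into four subproblems $S_{2,1},\dots,S_{2,4}$ indexed by which of $G_4,G_5$ are forced active, with each active inequality promoted to an equality. Every subproblem I would solve by the penalty-function method (Theorem~\ref{th3.4}): form $\Phi(\mathbf{x},\mu)=F(\mathbf{x})+\mu\bigl[(H_1(\mathbf{x})-1)^2+(H_2(\mathbf{x})-a)^2+\sum_{i\in\mathcal I}G_i(\mathbf{x})^2\bigr]$, minimize it by Newton--Raphson for an increasing sequence of penalty parameters $\mu$, and set $s_{2,i}(a)=\lim_{\mu\to\infty}\theta_i(\mu)$; then $s_2(a)$ is the smallest of those $s_{2,i}(a)$ whose limiting point is feasible for $S_2(a)$. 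Sweeping $a$ across $[1.6456659,1.8231801]$ on a fine grid, plotting $s_{2,i}(a)/(\sqrt a-1)^2$ and distinguishing feasible from infeasible branches, produces $s_2(a)/(\sqrt a-1)^2$ and hence its infimum $34.6494874$.

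To finish, at the minimizing value $a=a_1$ I would display the explicit $8$-vector $s$ and verify \emph{by direct substitution} that it lies in the feasible region of the full problem $S_1(a_1)$, i.e.\ that $G_1(s),G_2(s),G_3(s),G_6(s)\ge 0$ in addition to the constraints of $S_2$. The resulting equality $\chi_7(a_1)=s_1(a_1)=s_2(a_1)$, combined with the inequality $\chi_7(a)/(\sqrt a-1)^2\ge s_2(a)/(\sqrt a-1)^2$ valid throughout the interval, pins the infimum of $\chi_7(a)/(\sqrt a-1)^2$ exactly at $34.6494874$; the realizing cosine polynomial is then recovered from $s$ through the Fejér--Riesz identities $a_j=2\sum_k s_k s_{k+j}$, giving the stated coefficients.

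I expect the main obstacle to be not the KKT and penalty bookkeeping but the justification that the \emph{numerically} computed points are genuine global minimizers of the subproblems and that the relaxation $S_2(a)$ is tight at the optimum: one must argue that the Newton--Raphson iterates have settled on true stationary points, that no lower feasible branch is missed between grid points, and that the verification $G_1(s),G_2(s),G_3(s),G_6(s)\ge 0$ is exact rather than an artifact of rounding. As in the earlier sections, this is controlled from below by the relaxation inequality $s_2(a)\le\chi_7(a)$ and from above by exhibiting a concrete member of $C_7$ attaining the value, so that the reported $V_7=34.6494874$ is squeezed between a computed lower bound and an explicit upper bound.
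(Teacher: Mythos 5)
Your proposal follows the paper's proof essentially step for step: the same restriction to $[1.6456659,1.8231801]$ via Theorem~\ref{th7.1}, the same Fejér--Riesz reduction to $S_1(a)$, the same relaxation to $S_2(a)$ with only $G_4,G_5$ retained, the same KKT/complementary-slackness decomposition into the four subproblems $S_{2,1},\dots,S_{2,4}$, the same penalty-function plus Newton--Raphson sweep over $a$, and the same final feasibility check of the point $s$ against the omitted constraints to close the gap $\chi_7(a_1)=s_1(a_1)=s_2(a_1)$. Your closing remarks on the numerical caveats (global optimality of the iterates, gaps between grid points, rounding in the feasibility check) are a fair and candid assessment of what the paper's argument likewise leaves to computation.
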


\begin{proof}
Fix an $a\in [1.6456659, 1.8231801]$, we would first like to establish the numerical solution of the optimization problem \( S_2(a) \). By applying  the Karush-Kuhn-Tucker conditions (Theorem \ref{th3.3}), if  a point $\overline{\mathbf{x}}\in \mathbb{R}^8$ solves Problem $S_2(a)$ globally, there exist scalars $\lambda_1,\lambda_2,u_1,$ and $u_2$  such that

\begin{align}
\nabla F(\overline{\mathbf{x}})+\sum_{i=1}^2 \lambda_i \nabla H_i(\overline{\mathbf{x}})+\sum_{i=1}^2 u_i \nabla G_{3+i}(\overline{\mathbf{x}}) & =\mathbf{0} & \\
u_i G_{3+i}(\overline{\mathbf{x}}) & =0 & \text { for } i=1, 2\label{eq7.2} \\
u_i & \geq 0 & \text { for } i=1, 2
\end{align}

By the Equation (\ref{eq7.2}), the problem $S_2(a)$ could be divided into the following four subproblems, we call them problem $S_{2,1}(a), S_{2,2}(a), S_{2,3}(a)$ and $S_{2,4}(a)$ respectively. We will use $s_{2,1}(a),s_{2,2}(a),s_{2,3}(a)$ and $s_{2,4}(a)$ to denote the solutions of these subproblems respectively. The first subcase is when $u_1=0, u_2=0$, the minimization problem corresponding to this subcase is

\[
\begin{aligned}
\text{$S_{2,1}(a)$: Minimize } & F(\mathbf{x}) = (x_0 + x_1 + \ldots + x_7)^2 - 1 \\
\text{subject to } & H_1(\mathbf{x}) = x_0^2 + x_1^2 + x_2^2 + \ldots + x_7^2 = 1, \\
& H_2(\mathbf{x}) = 2(x_0 x_1 + x_1 x_2 + \ldots + x_{6} x_7) = a. 
\end{aligned}
\]

The second subcase is when $u_1\neq 0, u_2=0$, the minimization problem corresponding to this subcase is    

\[ 
\begin{aligned} 
\text{$S_{2,2}(a)$: Minimize } & F(\mathbf{x}) = (x_0 + x_1 + \ldots + x_7)^2 - 1 \\
\text{subject to } & H_1(\mathbf{x}) = x_0^2 + x_1^2 + x_2^2 + \ldots + x_7^2 = 1, \\
& H_2(\mathbf{x}) = 2(x_0 x_1 + x_1 x_2 + \ldots + x_{6} x_7) = a, \\
& G_4(\mathbf{x}) = 2(x_0 x_5 + x_1 x_6+x_2x_7 )=0.
\end{aligned}
\]

The third subcase is when $u_1=0, u_2\neq 0$, the minimization problem corresponding to this subcase is

\[
\begin{aligned}
\text{$S_{2,3}(a)$: Minimize } & F(\mathbf{x}) = (x_0 + x_1 + \ldots + x_7)^2 - 1 \\
\text{subject to } & H_1(\mathbf{x}) = x_0^2 + x_1^2 + x_2^2 + \ldots + x_7^2 = 1, \\
& H_2(\mathbf{x}) = 2(x_0 x_1 + x_1 x_2 + \ldots + x_{6} x_7) = a, \\
& G_5(\mathbf{x}) = 2(x_0 x_6 + x_1 x_7)=0.
\end{aligned}
\]

The fourth subcase is when $u_1\neq 0, u_2\neq 0$, the minimization problem corresponding to this subcase is

\[
\begin{aligned}
\text{$S_{2,4}(a)$: Minimize } & F(\mathbf{x}) = (x_0 + x_1 + \ldots + x_7)^2 - 1 \\
\text{subject to } & H_1(\mathbf{x}) = x_0^2 + x_1^2 + x_2^2 + \ldots + x_7^2 = 1, \\
& H_2(\mathbf{x}) = 2(x_0 x_1 + x_1 x_2 + \ldots + x_{6} x_7) = a, \\
& G_4(\mathbf{x}) = 2(x_0 x_5 + x_1 x_6+x_2x_7 )=0, \\
& G_5(\mathbf{x}) = 2(x_0 x_6 + x_1 x_7)=0.
\end{aligned}
\]

By penalty function method (Theorem \ref{th3.4}), for $i=1,2,3,4$, we have
\[
s_{2,i}(a) = \lim_{\mu \to \infty} \theta_i(\mu),
\]
where 
\begin{align*}
\theta_1(\mu) &= \inf \left\{ F(\mathbf{x}) + \mu \left((H_1(\mathbf{x})-1)^2+(H_2(\mathbf{x})-a)^2\right) \mid \mathbf{x} \in \mathbb{R}^8 \right\}, \\
\theta_2(\mu) &= \inf \left\{ F(\mathbf{x}) + \mu \left((H_1(\mathbf{x})-1)^2+(H_2(\mathbf{x})-a)^2+G_4(\mathbf{x})^2\right) \mid \mathbf{x} \in \mathbb{R}^8 \right\},\\
\theta_3(\mu) &= \inf \left\{ F(\mathbf{x}) + \mu \left((H_1(\mathbf{x})-1)^2+(H_2(\mathbf{x})-a)^2+G_5(\mathbf{x})^2\right) \mid \mathbf{x} \in \mathbb{R}^8 \right\},\\
\theta_4(\mu) &= \inf \left\{ F(\mathbf{x}) + \mu \left((H_1(\mathbf{x})-1)^2+(H_2(\mathbf{x})-a)^2+\sum_{i=1}^2G_{3+i}(\mathbf{x})^2\right) \mid \mathbf{x} \in \mathbb{R}^8 \right\}.
\end{align*}
A Matlab-based algorithm is subsequently used to calculate $\di \frac{s_{2,i}(a)}{(\sqrt{a}-1)^2}$ for $i=1,2,3,4$, by adopting the Newton-Raphson method in conjunction with a penalty function technique to iteratively find the minimizer of the penalized objective function (see \ref{Appendix B} for the detailed Matlab implementation).

For each subproblem \( S_{2,i}(a) \), a graph (Figure\ref{fig14}, Figure\ref{fig15}, Figure\ref{fig16} and Figure\ref{fig17}) is generated plotting the ratio \(\di \frac{s_{2,i}(a)}{(\sqrt{a} - 1)^2} \) against \( a \). In these graphs, different colors are used to show feasible and infeasible solutions to the original problem $S_2(a)$. Red points indicate infeasible solutions that solve the subproblems but do not meet all constraints of $S_2(a)$. On the other hand, green points represent feasible solutions that satisfy all the constraints.
\begin{figure}[H]
  \centering
  \includegraphics[width=9cm]{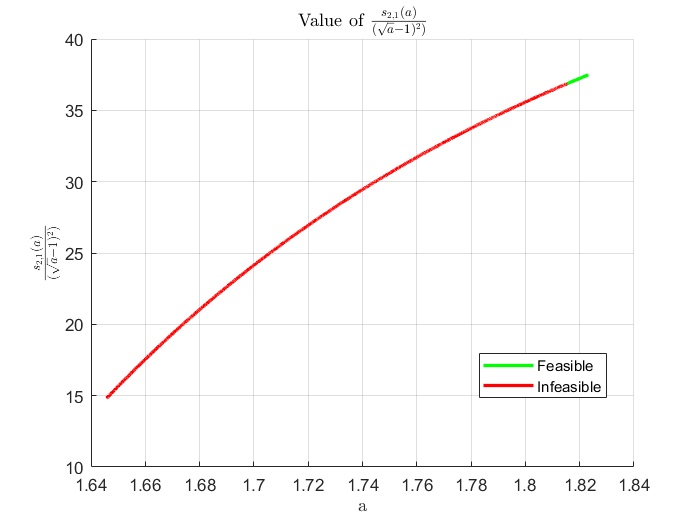}
  \caption{The ratio $\di \frac{s_{2,1}(a)}{(\sqrt{a}-1)^2}$.}
  \label{fig14}
\end{figure}

  \begin{figure}[H]
    \centering
    \includegraphics[width=9cm]{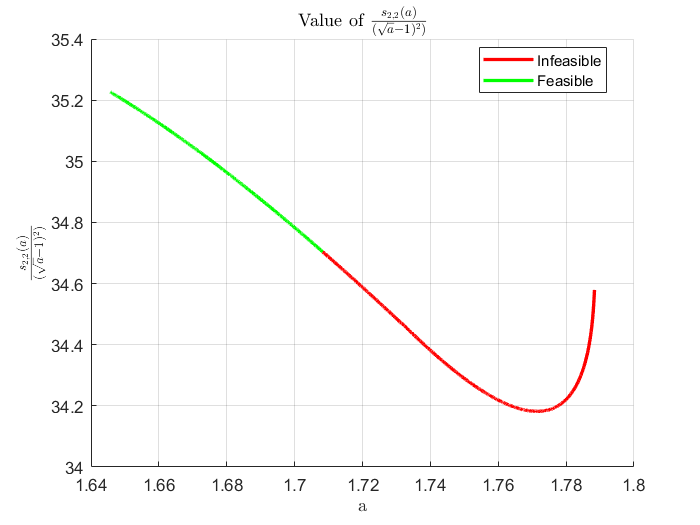}
    \caption{The ratio $\di \frac{s_{2,2}(a)}{(\sqrt{a}-1)^2}$.}
    \label{fig15}
    \end{figure}

    \begin{figure}[H]
      \centering
      \includegraphics[width=9cm]{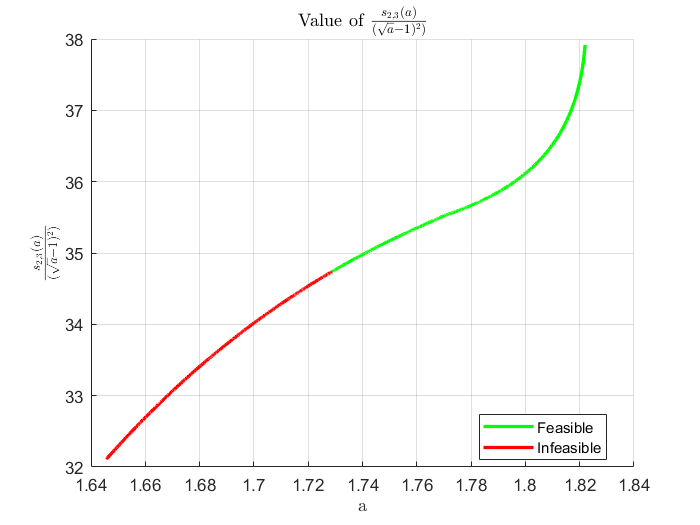}
      \caption{The ratio $\di \frac{s_{2,3}(a)}{(\sqrt{a}-1)^2}$.}
      \label{fig16}
      \end{figure}

\begin{figure}[H]
      \centering
        \includegraphics[width=9cm]{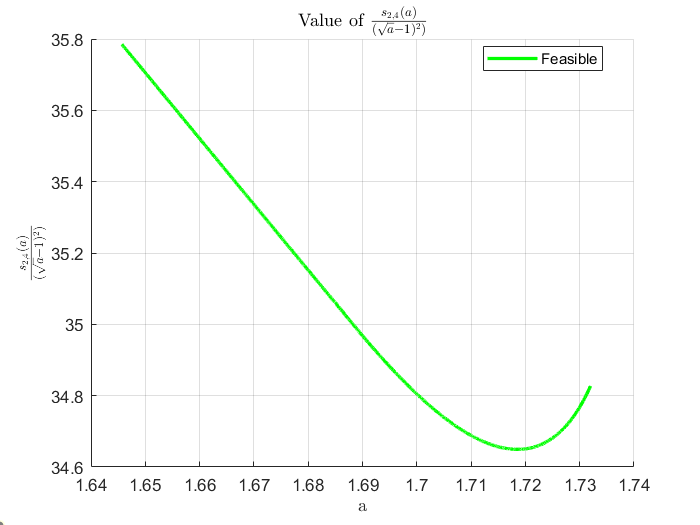}
        \caption{The ratio $\di \frac{s_{2,4}(a)}{(\sqrt{a}-1)^2}$.}
        \label{fig17}
        \end{figure}
      
Among all feasible solutions for each \( a \), \( s_2(a) \) is identified as the minimum value of all $s_{2,i}(a)$ that are feasible to the problem $S_2(a)$. Figure \ref{fig18} shows the graph plotting the ratio of the function $\di\frac{s_{2}(a)}{(\sqrt{a}-1)^2}$.
\begin{figure}[H]
  \centering
  \includegraphics[width=9cm]{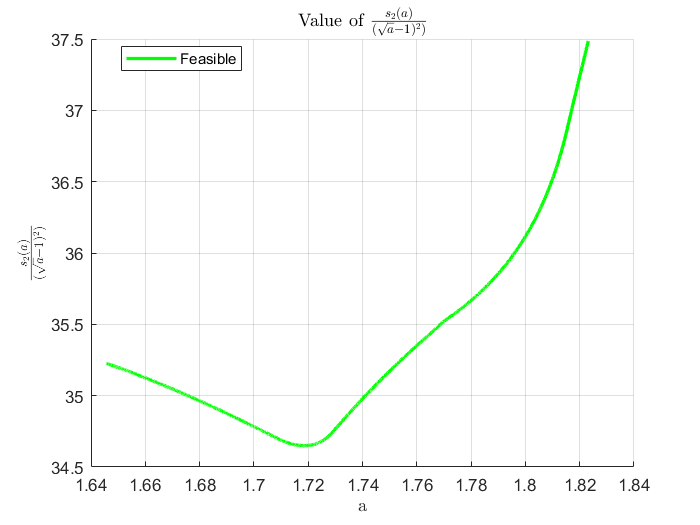}
  \caption{The ratio $\di \frac{s_{2}(a)}{(\sqrt{a}-1)^2}$.}
  \label{fig18}
  \end{figure}

Then, the infimum of the ratio is determined to be \( 34.64948641 \), achieved at a specific point in \( \mathbb{R}^8 \). This point is given by
\begin{align*}
  s&=[0.1685903,0.4506317,0.6267577,0.5454191,\\
  &0.2756348,0.0361770,-0.0282171,0.1055656].
  \end{align*}

 The polynomial corresponding to this solution is,
$$f(\phi)=a_0+a_1\cos\phi+a_2\cos 2\phi+a_3\cos 3\phi+a_4\cos 4\phi+a_5 \cos 5\phi+a_6\cos 6\phi,$$
where
\begin{align*}
a_0&=1,\\
a_1&=1.7185098,\\ 
a_2&=1.0731034,\\
a_3&=0.4527292,\\
a_4&=0.101695001,\\
a_5&=0,\\
a_6&=0,\\
a_7&=0.003559468.
\end{align*}

One could see that this point $s$ also lies in the feasible region of $S_1\left(a_1\right)$. This implies that the optimal solution of problem $S_2\left(a_1\right)$ is also the optimal solution of problem $S_1\left(a_1\right)$ and thus agrees with the value of $\chi_7\left(a_1\right)$. By the relationship
$$
\frac{\chi_7(a)}{(\sqrt{a}-1)^2} \geq \frac{s_2(a)}{(\sqrt{a}-1)^2},
$$
one can conclude
\begin{align*}
V_7&=\inf_{a \in \di [1.6456659,1.8231801]} \frac{\chi_7(a)}{(\sqrt{a} - 1)^2}\\
&= \inf_{a \in \di [1.6456659,1.8231801]} \frac{s_2(a)}{(\sqrt{a} - 1)^2}\\
&= 34.6494874.
\end{align*}
This completes the proof.
\end{proof}

\vfil\pagebreak

\section{Exact value of $V_8$.}
In this section, we aim to determine the exact value of $V_8$, following the same approach as in the previous sections. From Equation (\ref{eq1.2}), the value of $V_8$ is given by  
$$
V_8=\inf _{a \in\left(1, 2\di\cos\frac{\pi}{10}\right]} \frac{\chi_8(a)}{(\sqrt{a}-1)^2},
$$
where
$$\chi_8(a)=\inf \left\{f(0)-1 \mid f(\phi)\in C_8, a_0\left(f\right)=1 \text { and } a_1\left(f\right)=a\right\} \text {. }$$

\subsection{Exact value of $V_8$.}

In order to find the exact value of $V_8$, we will use the Karush-Kuhn-Tucker conditions and the penalty function method to solve the optimization problem $\chi_8(a)$. To begin, we will narrow the range of a to a smaller interval.

By a similar proof in Theorem \ref{th6.1}, we have the following theorem.
\begin{theorem} The exact value of $V_8$ could be obtained by
$$V_8=\inf _{a \in[1.6566924,1.8191095]} \frac{\chi_8(a)}{(\sqrt{a}-1)^2}.$$
\end{theorem}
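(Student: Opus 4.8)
The plan is to reproduce the mechanism used for Theorems~\ref{th5.1} and~\ref{th6.1}: combine the cheap lower bounds for $\chi_n(a)$ furnished by Theorems~\ref{th4.1} and~\ref{th4.2} with the monotonicity of the auxiliary ratio $F(a)=(Aa-B)/(\sqrt{a}-1)^{2}$ in order to discard every $a$ outside the claimed interval. First I would note that $C_7\subset C_8$ (a degree-$7$ cosine polynomial is a degree-$8$ one with $a_8=0$), so $V_8\le V_7=34.6494874$. Hence it suffices to prove that whenever $a\in\left(1,2\cos\frac{\pi}{10}\right]$ and $a\notin[1.6566924,1.8191095]$ one has $\dfrac{\chi_8(a)}{(\sqrt{a}-1)^{2}}>34.6494874$; since $34.6494874\ge V_8$, such an $a$ cannot be where the infimum defining $V_8$ is approached, and the stated identity follows at once.

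Second, I would recall the derivative identity $F'(a)=\dfrac{B-A\sqrt{a}}{(\sqrt{a}-1)^{3}\sqrt{a}}$, which gives: $F$ is strictly decreasing on $(1,\infty)$ when $A>B$, and when $A<B$ it increases on $\left(1,(B/A)^{2}\right]$ and decreases on $\left[(B/A)^{2},\infty\right)$. Applied to
$$F_1(a)=\frac{2a-1}{(\sqrt{a}-1)^{2}},\qquad F_2(a)=\frac{5.8726781\,a-6.8726781}{(\sqrt{a}-1)^{2}},\qquad F_3(a)=\frac{16.5\,a-25.8011608}{(\sqrt{a}-1)^{2}},$$
this shows $F_1$ decreasing throughout, $F_2$ increasing on $(1,1.3695543]$ and decreasing afterwards, and $F_3$ increasing on $\left(1,2\cos\frac{\pi}{10}\right]$. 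I would then break the complement of $[1.6566924,1.8191095]$ into three blocks and bound $\chi_8(a)/(\sqrt a-1)^2$ from below on each: on a leftmost block $(1,c_m]$ use $F_1$ (so the bound is $F_1(c_m)$); on a middle-left block $[c_m,1.6566924]$ use $F_2$, which on that block satisfies $F_2\ge F_1$ and is decreasing (so the bound is $F_2(1.6566924)$); on the rightmost block $\left[1.8191095,2\cos\frac{\pi}{10}\right]$ use $F_3$, which is increasing there (so the bound is $F_3(1.8191095)$). The junction $c_m$ is chosen, exactly as in the proof of Theorem~\ref{th6.1}, just past the $F_1$–$F_2$ crossover $a=5.8726781/3.8726781\approx1.5164$, and small enough that $F_1(c_m)$ still exceeds $34.6494874$.

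Third, the proof then collapses to the three numerical verifications
$$F_1(c_m)>34.6494874,\qquad F_2(1.6566924)>34.6494874,\qquad F_3(1.8191095)>34.6494874,$$
each done by direct evaluation (or a brief golden-section search) as in Theorem~\ref{th6.1}. Putting the three blocks together yields $\chi_8(a)/(\sqrt a-1)^2>34.6494874\ge V_8$ for all $a\notin[1.6566924,1.8191095]$, and therefore $V_8=\inf_{a\in[1.6566924,1.8191095]}\chi_8(a)/(\sqrt a-1)^2$.

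The one real obstacle is quantitative, not conceptual: because $V_8$ and $V_7$ agree to three decimals, the endpoints $1.6566924$ and $1.8191095$ (and the auxiliary point $c_m$) must be fixed precisely enough that the three inequalities above hold with their correct — and extremely thin — positive margin, the same delicacy seen in Theorem~\ref{th6.1}, where $F_2$ at the endpoint beats $V_5$ by only about $2\times10^{-7}$. Subsidiary checks are that $1.8191095<2\cos\frac{\pi}{10}$ so the rightmost block is a nondegenerate interval, and that the three blocks indeed cover all of $\left(1,2\cos\frac{\pi}{10}\right]\setminus[1.6566924,1.8191095]$.
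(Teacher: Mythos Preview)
Your proposal is correct and matches the paper's proof essentially line for line: the paper likewise uses $V_8\le V_7=34.6494874$, the same three lower bounds $F_1,F_2,F_3$ with the same monotonicity analysis, and the same three-block decomposition of the complement. The only detail you leave implicit is the numerical choice of the junction point, which the paper takes as $c_m=1.5542038$ (yielding $F_1(1.5542038)=34.6494937$, $F_2(1.6566924)=34.6494895$, $F_3(1.8191095)=34.6494933$, each just above $V_7$).
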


\begin{proof} Since the set of cosine polynomials $C_7 \subset C_8$, one have $V_8\leq V_7=34.6494874$. However, by Theorem $\ref{th4.1}$ and Theorem $\ref{th4.2}$, we have
\begin{align}
V_8(a) &\geq F_1(a)=\frac{2a-1}{(\sqrt{a}-1)^2},\\
V_8(a) &\geq F_2(a)=\frac{5.87267810 a-6.87267810}{(\sqrt{a}-1)^2},\\
V_8(a) &\geq F_3(a)=\frac{ 16.5 a-25.8011608}{(\sqrt{a}-1)^2}.
\end{align}
for $a \in \left(1,2\di\cos\frac{\pi}{10}\right].$ Given function $F:(1,\infty)\to \mathbb{R}$ with the form
$$F(a)=\frac{Aa-B}{(\sqrt{a}-1)^2},$$
where $A,B$ are constant. Taking the derivative, one has
$$F'(a)=\frac{B-A \sqrt{x}}{(\sqrt{x}-1)^3 \sqrt{x}}.$$
This implies when $A>B$ the function $F(a)$ is decreasing on the interval $(1,\infty)$. On the other hand, when $A<B$, the function increasing on the interval $\di \left(1,(B/A)^2\right]$ and decreasing on the interval $\di[(B/A)^2,\infty)$. Apply this on the function $F_1(a),F_2(a)$ and $F_3(a)$, we found out $F_1(a)$ is a decreasing function on the interval $\di\left(1,2\cos\frac{\pi}{10}\right]$. However, the function $F_2(a)$ is increasing on $(1,1.3695543]$ and is decreasing on the interval $\di\left[1.36955543,2\cos\frac{\pi}{10}\right]$. On the same time the function $F_3(a)$ is increasing on the interval $\di\left(1,2\cos\frac{\pi}{10}\right]$.

Now, given any $f(\phi) \in C_8$ with $a_0(f)=1,a_1(f)=a\in (1,1.5542038]$, one has
$$v(f)\geq \frac{\chi(a)}{(\sqrt{a}-1)^2} \geq F_1(a) \geq F_1(1.5542038)=34.6494937>V_8. $$
Similarly, for any $f(\phi) \in C_8$ with $a_0(f)=1,a_1(f)=a\in [1.5542038,1.6566924]$, one could obtain
$$v(f)\geq \frac{\chi(a)}{(\sqrt{a}-1)^2} \geq F_2(a) \geq  F_2(1.6566924)=34.6494895>V_8. $$
Finally, for any $f(\phi) \in C_8$ with $a_0(f)=1,a_1(f)=a\in \left[1.8191095,2\cos\frac{\pi}{10}\right]$, one could obtain
$$v(f)\geq \frac{\chi(a)}{(\sqrt{a}-1)^2} \geq F_3(a) \geq  F_3(1.8191095)=34.6494933>V_8. $$
This completes the proof.
\end{proof}

By Fejer-Reisz theorem, the problem of finding the $\chi_8(a)$ for $\di a\in\left(1, 2\di\cos\frac{\pi}{10}\right]$ can be formalized as the minimization problem in $\mathbb{R}^9$ as:

\begin{problem}

 Consider the optimization problem \( T_1(a) \) defined as follows:

\[
\begin{aligned}
\text{$T_1(a)$: Minimize } & F(\mathbf{x}) = (x_0 + x_1 + \ldots + x_8)^2 - 1 \\
\text{subject to } & H_1(\mathbf{x}) = x_0^2 + x_1^2 + x_2^2 + \ldots + x_8^2 = 1, \\
& H_2(\mathbf{x}) = 2(x_0 x_1 + x_1 x_2 + \ldots + x_{7} x_8) = a, \\
& G_1(\mathbf{x}) = 2(x_0 x_2 + x_1 x_3 + \ldots+ x_{6} x_8) \geq 0, \\
& G_2(\mathbf{x}) = 2(x_0 x_3 + x_1 x_4 + \ldots+ x_{5} x_8) \geq 0, \\
& \quad\quad\quad{\vdots} \\
& G_{7}(\mathbf{x}) = 2x_0 x_8 \geq 0.
\end{aligned}
\]

Define \( t_1(a) \) as the optimal value of the objective function \( F(\mathbf{x}) \) for a given \( a \):

\[
t_1(a) = \min_{\mathbf{x}} F(\mathbf{x}) \quad \text{subject to the constraints of } T_1(a).
\]
\end{problem}

However, instead of considering Problem $T_1(a)$ with nine constraints, we will consider the following problem with four constraints.

\begin{problem}
Consider the optimization problem \( T_2(a) \) defined as follows:
\[
\begin{aligned}
\text{$T_2(a)$: Minimize } & F(\mathbf{x}) = (x_0 + x_1 + \ldots + x_8)^2 - 1 \\
\text{subject to } & H_1(\mathbf{x}) = x_0^2 + x_1^2 + x_2^2 + \ldots + x_8^2 = 1, \\
& H_2(\mathbf{x}) = 2(x_0 x_1 + x_1 x_2 + \ldots + x_{7} x_8) = a, \\
& G_4(\mathbf{x}) = 2(x_0 x_5 + x_1 x_6+x_2x_7+x_3x_8 ) \geq 0, \\
& G_5(\mathbf{x}) = 2(x_0 x_6 + x_1 x_7+x_2x_8) \geq 0.
\end{aligned}
\]

Define \( t_2(a) \) as the optimal value of the objective function \( F(\mathbf{x}) \) for a given \( a \):

\[
t_2(a) = \min_{\mathbf{x}} F(\mathbf{x}) \quad \text{subject to the constraints of } T_2(a).
\]
\end{problem}

Since the feasible region of problem $T_1(a)$ is the subset of the feasible region of problem $T_2(a)$, this implies
$$\chi_8(a)=t_1(a)\geq t_2(a).$$
If the optimal point of problem $T_2(a)$ also satisfies the condition $G_1(\mathbf{x}), G_2(\mathbf{x}),$ $ G_3(\mathbf{x}), G_6(\mathbf{x}),$ and $G_7(\mathbf{x})$, then the optimal point will lie inside the feasible region of problem $T_1(a)$. This implies it will also be the solution of problem $T_1(a)$. In this case,
$$\chi_8(a)=t_1(a)=t_2(a).$$
\begin{theorem}
The exact value of \( V_8 \) can be expressed as:

\[
V_8=\inf_{a \in[1.6566924,1.8191095]} \frac{t_2(a)}{(\sqrt{a} - 1)^2} = 34.5399155.
\]

The exact solution corresponding to this infimum could be obtained at the point
\begin{align*}
t=&[0.1246536,0.3805581,0.5968565,0.5888198,0.3562317,\\
&0.0912429,-0.0315349,-0.0146819,0.0159473]
\end{align*}
The polynomial corresponding to this solution is,
\begin{align*}
f(\phi)&=a_0+a_1\cos\phi+a_2\cos 2\phi+a_3\cos 3\phi+a_4\cos 4\phi\\ &+a_5 \cos 5\phi+a_6\cos 6\phi+a_7\cos 7\phi+a_8\cos 8\phi,
\end{align*}
where
\begin{align*}
a_0&=1,\\
a_1&=1.7312576,\\ 
a_2&=1.1034980,\\
a_3&=0.4821616,\\
a_4&=0.1146858,\\
a_5&=0,\\
a_6&=0,\\
a_7&=0.0084774,\\
a_8&=0.0039758.
\end{align*}

\end{theorem}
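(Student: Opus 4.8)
The plan is to follow the same pipeline already executed for $V_5$, $V_6$, and $V_7$. By the restriction theorem proved just above, we may assume $a \in [1.6566924, 1.8191095]$, and by the Fejér--Riesz theorem (Theorem \ref{th3.1}) the quantity $\chi_8(a)$ equals the optimal value $t_1(a)$ of the minimization problem $T_1(a)$ in $\mathbb{R}^9$. Rather than carry all seven inequality constraints of $T_1(a)$, I relax to the problem $T_2(a)$ keeping only $H_1, H_2, G_4, G_5$; since the feasible region of $T_1(a)$ is contained in that of $T_2(a)$ we always have $t_1(a) \geq t_2(a)$, with equality at any $a$ for which the $T_2(a)$-optimizer happens to satisfy the discarded constraints $G_1, G_2, G_3, G_6, G_7$.

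First I would fix $a$ and write down the Karush--Kuhn--Tucker necessary conditions for $T_2(a)$ (Theorem \ref{th3.3}); existence of a global optimizer is guaranteed by compactness (Theorem \ref{th3.2}), so there are multipliers $\lambda_1, \lambda_2, u_1, u_2$ with $u_1, u_2 \geq 0$, $u_1 G_4 = u_2 G_5 = 0$, and the stationarity equation holding. Complementary slackness then splits the problem into the four subproblems $T_{2,1}(a), \ldots, T_{2,4}(a)$ corresponding to the on/off patterns of $(G_4, G_5)$: in each one the active inequalities become equalities and the inactive ones are dropped with multiplier zero. For each subproblem I invoke the penalty-function theorem (Theorem \ref{th3.4}) to write $t_{2,i}(a) = \lim_{\mu \to \infty} \theta_i(\mu)$, where $\theta_i(\mu)$ is the infimum of $F(\mathbf{x})$ plus $\mu$ times the sum of squares of the equality residuals $H_1 - 1$, $H_2 - a$, together with $G_4$ and/or $G_5$ as dictated by the case. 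Each $\theta_i(\mu)$ is then computed by the Newton--Raphson routine of \ref{Appendix B}, the ratios $t_{2,i}(a)/(\sqrt{a}-1)^2$ are tabulated over the interval, and for each $a$ one records whether the returned point satisfies all constraints of $T_2(a)$ (feasible) or not (infeasible).

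Next, $t_2(a)$ is taken as the pointwise minimum of the feasible values $t_{2,i}(a)$, and minimizing $t_2(a)/(\sqrt{a}-1)^2$ over $[1.6566924, 1.8191095]$ yields $34.5399155$, attained at the vector $t$ displayed in the statement. The final step is the verification that closes the gap: one checks that this optimal $t$ also satisfies $G_1(t), G_2(t), G_3(t), G_6(t), G_7(t) \geq 0$, so $t$ lies in the feasible region of $T_1(a_1)$, is therefore optimal for $T_1(a_1)$, and $t_2(a_1) = t_1(a_1) = \chi_8(a_1)$. Combined with $\chi_8(a) \geq t_2(a)$ for every $a$, this forces $\inf_a \chi_8(a)/(\sqrt{a}-1)^2 = \inf_a t_2(a)/(\sqrt{a}-1)^2 = 34.5399155$, and reading off the coefficients of $f(\phi) = |\sum_k x_k \exp(ik\phi)|^2$ from $t$ gives the stated polynomial.

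The main obstacle is the same as in the $V_7$ section: the argument rests on the numerical resolution of the four penalized minimizations and on the validity of the limits $\lim_{\mu\to\infty}\theta_i(\mu)$ as the true subproblem optima, which in turn requires that the penalty iterates converge and that the multiplier limits $2\mu h_i(x_\mu)$, $2\mu g_i(x_\mu)$ exist so that stationarity genuinely holds at the limit point. A secondary but essential point is the feasibility check on the discarded constraints $G_1, G_2, G_3, G_6, G_7$: if the $T_2(a)$-optimizer at the extremal $a$ violated any of them, the chain $t_2 = t_1 = \chi_8$ would break and one would have to reinstate those constraints and enlarge the case analysis. Everything else --- assembling the KKT system, enumerating the four cases, forming $t_2$, and back-converting the optimal point to cosine coefficients --- is routine once the $V_7$ section is in hand.
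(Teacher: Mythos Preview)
Your proposal is correct and follows essentially the same approach as the paper: relax $T_1(a)$ to $T_2(a)$ with only $G_4,G_5$ retained, split via KKT complementary slackness into the four subproblems $T_{2,1},\ldots,T_{2,4}$, solve each by the penalty-function/Newton--Raphson routine, take the feasible pointwise minimum to form $t_2(a)$, minimize the ratio over the restricted interval, and then close the gap by verifying that the optimizer $t$ also satisfies the omitted constraints $G_1,G_2,G_3,G_6,G_7$ so that $t_2(a_1)=t_1(a_1)=\chi_8(a_1)$. Your identification of the two genuine pressure points---convergence of the penalty limits and the feasibility check on the discarded constraints---matches exactly what the paper relies on.
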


\begin{proof}
Fix an $a\in [1.6566924,1.8191095]$, we would first like to establish the numerical solution of the optimization problem \( T_2(a) \). By applying  the Karush-Kuhn-Tucker conditions (Theorem \ref{th3.3}), if  a point $\overline{\mathbf{x}}\in \mathbb{R}^9$ solves Problem $T_2(a)$ globally, there exist scalars $\lambda_1,\lambda_2,u_1,u_2$  such that

\begin{align}
\nabla F(\overline{\mathbf{x}})+\sum_{i=1}^2 \lambda_i \nabla H_i(\overline{\mathbf{x}})+\sum_{i=1}^2 u_i \nabla G_{3+i}(\overline{\mathbf{x}}) & =\mathbf{0} & \\
u_i G_{3+i}(\overline{\mathbf{x}}) & =0 & \text { for } i=1, 2\label{eq8} \\
u_i & \geq 0 & \text { for } i=1, 2
\end{align}

By the Equation (\ref{eq8}), the problem $T_2(a)$ could be divided into the following four subproblems, we call them problem $T_{2,1}(a), T_{2,2}(a), T_{2,3}(a)$ and $T_{2,4}(a)$ respectively. We will use $t_{2,1}(a),t_{2,2}(a),t_{2,3}(a)$ and $t_{2,4}(a)$ to denote the solutions of these subproblems respectively. The first subcase is when $u_1=0, u_2=0$, the minimization problem corresponding to this subcase is

\[
\begin{aligned}
\text{$T_{2,1}(a)$: Minimize } & F(\mathbf{x}) = (x_0 + x_1 + \ldots + x_8)^2 - 1 \\
\text{subject to } & H_1(\mathbf{x}) = x_0^2 + x_1^2 + x_2^2 + \ldots + x_8^2 = 1, \\
& H_2(\mathbf{x}) = 2(x_0 x_1 + x_1 x_2 + \ldots + x_{7} x_8) = a. 
\end{aligned}
\]

The second subcase is when $u_1\neq 0, u_2=0$, the minimization problem corresponding to this subcase is    

\[ 
\begin{aligned} 
\text{$T_{2,2}(a)$: Minimize } & F(\mathbf{x}) = (x_0 + x_1 + \ldots + x_8)^2 - 1 \\
\text{subject to } & H_1(\mathbf{x}) = x_0^2 + x_1^2 + x_2^2 + \ldots + x_8^2 = 1, \\
& H_2(\mathbf{x}) = 2(x_0 x_1 + x_1 x_2 + \ldots + x_{7} x_8) = a, \\
& G_4(\mathbf{x}) = 2(x_0 x_5 + x_1 x_6+x_2x_7+x_3x_8 )=0.
\end{aligned}
\]

The third subcase is when $u_1=0, u_2\neq 0$, the minimization problem corresponding to this subcase is

\[
\begin{aligned}
\text{$T_{2,3}(a)$: Minimize } & F(\mathbf{x}) = (x_0 + x_1 + \ldots + x_8)^2 - 1 \\
\text{subject to } & H_1(\mathbf{x}) = x_0^2 + x_1^2 + x_2^2 + \ldots + x_8^2 = 1, \\
& H_2(\mathbf{x}) = 2(x_0 x_1 + x_1 x_2 + \ldots + x_{7} x_8) = a, \\
& G_5(\mathbf{x}) = 2(x_0 x_6 + x_1 x_7+x_2x_8)=0.
\end{aligned}
\]

The fourth subcase is when $u_1\neq 0, u_2\neq 0$, the minimization problem corresponding to this subcase is

\[
\begin{aligned}
\text{$T_{2,4}(a)$: Minimize } & F(\mathbf{x}) = (x_0 + x_1 + \ldots + x_8)^2 - 1 \\
\text{subject to } & H_1(\mathbf{x}) = x_0^2 + x_1^2 + x_2^2 + \ldots + x_8^2 = 1, \\
& H_2(\mathbf{x}) = 2(x_0 x_1 + x_1 x_2 + \ldots + x_{7} x_8) = a, \\
& G_4(\mathbf{x}) = 2(x_0 x_5 + x_1 x_6+x_2x_7+x_3x_8 )=0, \\
& G_5(\mathbf{x}) = 2(x_0 x_6 + x_1 x_7+x_2x_8)=0.
\end{aligned}
\]

By penalty function method (Theorem \ref{th3.4}), for $i=1,2,3,4$, we have
\[
t_{2,i}(a) = \lim_{\mu \to \infty} \theta_i(\mu),
\]
where 
\begin{align*}
\theta_1(\mu) &= \inf \left\{ F(\mathbf{x}) + \mu \left((H_1(\mathbf{x})-1)^2+(H_2(\mathbf{x})-a)^2\right) \mid \mathbf{x} \in \mathbb{R}^9 \right\}, \\
\theta_2(\mu) &= \inf \left\{ F(\mathbf{x}) + \mu \left((H_1(\mathbf{x})-1)^2+(H_2(\mathbf{x})-a)^2+G_4(\mathbf{x})^2\right) \mid \mathbf{x} \in \mathbb{R}^9 \right\},\\
\theta_3(\mu) &= \inf \left\{ F(\mathbf{x}) + \mu \left((H_1(\mathbf{x})-1)^2+(H_2(\mathbf{x})-a)^2+G_5(\mathbf{x})^2\right) \mid \mathbf{x} \in \mathbb{R}^9 \right\},\\
\theta_4(\mu) &= \inf \left\{ F(\mathbf{x}) + \mu \left((H_1(\mathbf{x})-1)^2+(H_2(\mathbf{x})-a)^2+\sum_{i=1}^2G_{3+i}(\mathbf{x})^2\right) \mid \mathbf{x} \in \mathbb{R}^9 \right\}.
\end{align*}

Then, a Matlab-based algorithm is employed to compute the $\di \frac{t_{2,i}(a)}{(\sqrt{a}-1)^2}$ for $i=1,2,3,4$, through Newton-Raphson method and penalty function approach.

For each subproblem \( t_{2,i}(a) \), a graph (Figure \ref{fig19}, Figure \ref{fig20}, Figure \ref{fig21} and Figure \ref{fig22}) is generated plotting the ratio \(\di \frac{t_{2,i}(a)}{(\sqrt{a} - 1)^2} \) against \( a \). Colors are utilized in the graphs to differentiate between feasible and infeasible solutions for the original problem $T_2(a)$. Red points signify infeasible solutions that resolve subproblems but violate certain constraints, while green points represent feasible solutions that adhere to all constraints.
\begin{figure}[H]
  \centering
  \includegraphics[width=9cm]{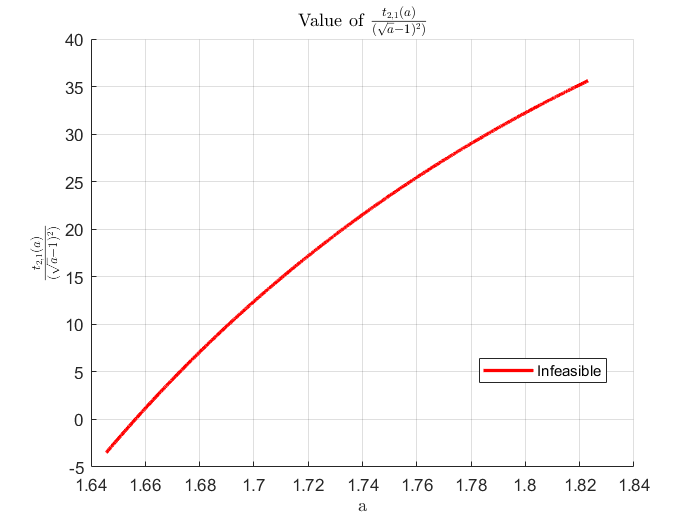}
  \caption{The ratio $\di \frac{t_{2,1}(a)}{(\sqrt{a}-1)^2}$.}
  \label{fig19}
  \end{figure}

  \begin{figure}[H]
    \centering
    \includegraphics[width=9cm]{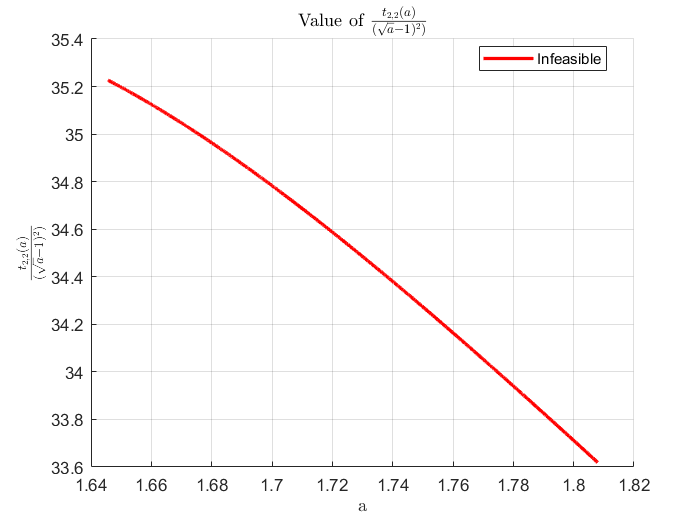}
    \caption{The ratio $\di \frac{t_{2,2}(a)}{(\sqrt{a}-1)^2}$.}
    \label{fig20}
    \end{figure}

    \begin{figure}[H]
      \centering
      \includegraphics[width=9cm]{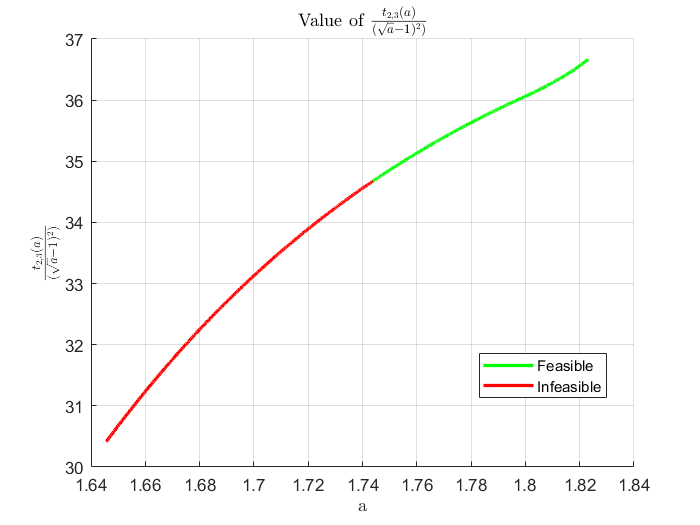}
      \caption{The ratio $\di \frac{t_{2,3}(a)}{(\sqrt{a}-1)^2}$.}
      \label{fig21}
      \end{figure}

\begin{figure}[H]
        \centering
        \includegraphics[width=9cm]{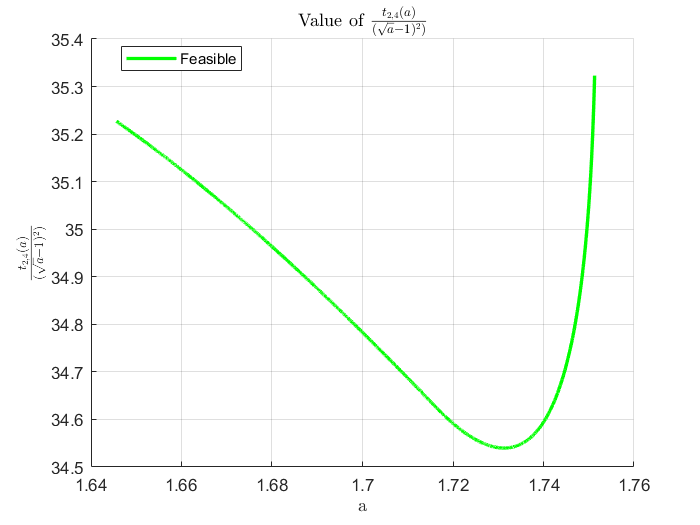}
        \caption{The ratio $\di \frac{t_{2,4}(a)}{(\sqrt{a}-1)^2}$.}
        \label{fig22}
        \end{figure}
      
Among all feasible solutions for each \( a \), \( t_2(a) \) is identified as the minimum value of all $t_{2,i}(a)$ that are feasible to the problem $T_2(a)$. The following graph (Figure \ref{fig23}) is generated plotting the ratio of the function $\di\frac{t_{2}(a)}{(\sqrt{a}-1)^2}$.
\begin{figure}[H]
  \centering
  \includegraphics[width=9cm]{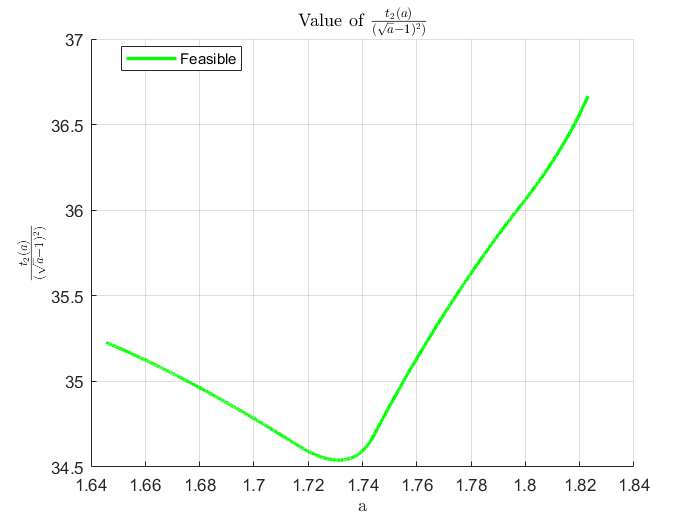}
  \caption{The ratio $\di \frac{t_{2}(a)}{(\sqrt{a}-1)^2}$.}
  \label{fig23}
  \end{figure}

Then, the infimum of the ratio is determined to be \( 34.5399155 \), achieved at  

\begin{align*}
  t=&[0.1246536,0.3805581,0.5968565,0.5888198,0.3562317,\\
  &0.0912429,-0.0315349,-0.0146819,0.0159473].
  \end{align*}
  The polynomial corresponding to this solution is,
  \begin{align*}
  f(\phi)&=a_0+a_1\cos\phi+a_2\cos 2\phi+a_3\cos 3\phi+a_4\cos 4\phi\\ &+a_5 \cos 5\phi+a_6\cos 6\phi+a_7\cos 7\phi+a_8\cos 8\phi,
  \end{align*}
  where
  \begin{align*}
  a_0&=1,\\
  a_1&=1.7312576,\\ 
  a_2&=1.1034980,\\
  a_3&=0.4821616,\\
  a_4&=0.1146858,\\
  a_5&=0,\\
  a_6&=0,\\
  a_7&=0.0084774,\\
  a_8&=0.0039758.
  \end{align*}

One could see that this point $t$ also lies in the feasible region of $T_1\left(a_1\right)$. This implies that the optimal solution of problem $T_2\left(a_1\right)$ is also the optimal solution of problem $T_1\left(a_1\right)$ and thus agrees with the value of $\chi_8\left(a_1\right)$. By the relationship
$$
\frac{\chi_8(a)}{(\sqrt{a}-1)^2} \geq \frac{t_2(a)}{(\sqrt{a}-1)^2},
$$
one can conclude
\begin{align*}
V_8&=\inf_{a \in \di [1.6566924,1.8191095]} \frac{\chi_8(a)}{(\sqrt{a} - 1)^2}\\
&= \inf_{a \in \di [1.6566924,1.8191095]} \frac{t_2(a)}{(\sqrt{a} - 1)^2}\\
&= 34.5399155.
\end{align*}

This completes the proof.

\end{proof}

\vfil\pagebreak
\section*{Acknowledgements}
The work is supported by the Xiamen University Malaysia Research Fund (XMUMRF/2021-C8/IMAT/0017).

This work continues the author's undergraduate thesis, which was supervised by Prof. Dr. Teo Lee Peng at Xiamen University Malaysia. The author expresses sincere gratitude for her guidance, which lasted approximately 18 months. He appreciates all the knowledge and qualities he learned from her. Additionally, the author would like to thank the Department of Mathematics and Applied Mathematics at Xiamen University Malaysia for providing a quality education.

Last but not least, the support from family members, his partner, and every teacher was the only reason the author was able to overcome all hardships and turn professional.

\vfil\pagebreak
\appendix

\section{Proof of the lemma in Section 2.3.}\label{Appendix A}
\begin{lemma}[Lemma 2.9]\label{A.1}
Given four positive real numbers $a, b, \Delta a, \Delta b$, if
\[
\frac{\Delta a}{\Delta b} > \frac{a}{b},
\]
then
\[
\frac{a + \Delta a}{b + \Delta b} > \frac{a}{b}.
\]
\end{lemma}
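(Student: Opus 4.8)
The plan is to reduce everything to a single cross-multiplied inequality, exploiting that all four quantities are positive so that no inequality reverses direction. First I would rewrite the hypothesis $\frac{\Delta a}{\Delta b} > \frac{a}{b}$: since $b > 0$ and $\Delta b > 0$, multiplying both sides by $b\,\Delta b > 0$ preserves the inequality and yields the equivalent statement $b\,\Delta a > a\,\Delta b$.

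Next I would treat the conclusion the same way. The target inequality $\frac{a+\Delta a}{b+\Delta b} > \frac{a}{b}$ has strictly positive denominators, namely $b > 0$ and $b + \Delta b > 0$, so multiplying through by $b(b+\Delta b) > 0$ shows it is equivalent to $b(a+\Delta a) > a(b+\Delta b)$. Expanding both sides gives $ab + b\,\Delta a > ab + a\,\Delta b$, and cancelling the common term $ab$ leaves exactly $b\,\Delta a > a\,\Delta b$, which is the reformulated hypothesis. Chaining these equivalences proves the lemma.

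The only point that needs any attention — and it is hardly an obstacle — is to verify at each step that the factor by which we multiply is strictly positive, so the inequality sign is preserved; this is immediate from the assumption that $a, b, \Delta a, \Delta b$ are all positive, which in particular forces $b + \Delta b > 0$. An alternative, essentially identical route would be to recognize $\frac{a+\Delta a}{b+\Delta b}$ as a mediant of the fractions $\frac{a}{b}$ and $\frac{\Delta a}{\Delta b}$ and to invoke the standard fact that a mediant lies strictly between two unequal fractions; however, the direct cross-multiplication is the shortest and cleanest, and is what I would write up.
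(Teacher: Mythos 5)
Your proof is correct and is essentially the same elementary argument as the paper's: the paper sets $k = a/b$ and bounds $\frac{a+\Delta a}{b+\Delta b} > \frac{kb+k\Delta b}{b+\Delta b} = k$, which is just your cross-multiplication $b\,\Delta a > a\,\Delta b$ in a slightly different guise. Both hinge on the same positivity observations, so no further comment is needed.
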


\begin{proof}
Let $\di\frac{a}{b} = k$, then $\di\Delta a > \frac{a}{b} \Delta b = k \Delta b$, which implies
\[
\frac{a + \Delta a}{b + \Delta b} > \frac{kb + k\Delta b}{b + \Delta b} = k.
\]
This completes the proof.
\end{proof}

\begin{lemma}[Lemma 2.10] \label{A.2}
Given five positive real numbers $a_1, a_2, b_1, b_2, k$, if
\[
\frac{a_1}{b_1} > k \quad \text{and} \quad \frac{a_2}{b_2} > k,
\]
then
\[
\frac{a_1 + a_2}{b_1 + b_2} > k.
\]
\end{lemma}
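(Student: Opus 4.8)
The plan is to clear denominators, add the two inequalities, and divide back. Since $b_1 > 0$ and the hypothesis is $a_1/b_1 > k$, the first step is to rewrite this as $a_1 > k b_1$; similarly the second hypothesis becomes $a_2 > k b_2$. These are equivalent reformulations because multiplying a strict inequality by a positive quantity preserves both its direction and its strictness.

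Next I would add the two inequalities $a_1 > k b_1$ and $a_2 > k b_2$ termwise to obtain
\[
a_1 + a_2 > k b_1 + k b_2 = k(b_1 + b_2).
\]
Here strictness is preserved under addition (the sum of something strictly larger with something strictly larger is strictly larger). Finally, since $b_1 > 0$ and $b_2 > 0$ we have $b_1 + b_2 > 0$, so dividing the last display by $b_1 + b_2$ yields
\[
\frac{a_1 + a_2}{b_1 + b_2} > k,
\]
which is the claim.

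There is essentially no obstacle: the only point requiring care is the sign of $b_1 + b_2$, which is guaranteed by the positivity hypotheses and is exactly what licenses the final division without flipping the inequality. (One could alternatively deduce this from Lemma \ref{A.1} by taking $a = a_1$, $b = b_1$, $\Delta a = a_2$, $\Delta b = b_2$ together with $a_1/b_1 > k$ and $a_2/b_2 > k \geq$ a lower bound, but the direct two-line argument above is cleaner and self-contained.)
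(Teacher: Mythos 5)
Your proof is correct. It takes a genuinely different route from the paper: you clear denominators, add the two strict inequalities $a_1 > k b_1$ and $a_2 > k b_2$, and divide by the positive quantity $b_1 + b_2$ — a fully self-contained two-line argument. The paper instead deduces the lemma from Lemma \ref{A.1} (the mediant inequality): it assumes without loss of generality that $\frac{a_1}{b_1} \geq \frac{a_2}{b_2}$ and concludes $\frac{a_1+a_2}{b_1+b_2} \geq \frac{a_2}{b_2} > k$, i.e.\ the mediant of two fractions is bounded below by the smaller of them. Your version is more elementary and avoids both the case analysis implicit in the WLOG step and the dependence on the previous lemma; the paper's version buys a slightly stronger intermediate fact (the mediant lies between the two fractions, not merely above $k$), though that extra strength is not used elsewhere. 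Your parenthetical remark about recovering the paper's route via Lemma \ref{A.1} is essentially the right idea, though as written the substitution is stated a little loosely — one needs the WLOG ordering of $\frac{a_1}{b_1}$ and $\frac{a_2}{b_2}$ to apply that lemma, which your direct argument happily sidesteps.
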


\begin{proof}
This is just an implication of Lemma \ref{A.1}. Without loss of generality, assume that $\di \frac{a_1}{b_1} \geq \frac{a_2}{b_2}$, then
\[
\frac{a_1 + a_2}{b_1 + b_2} \geq \frac{a_2}{b_2} > k.
\]
This completes the proof.
\end{proof}

\begin{lemma}[Lemma 2.11]
Given four positive real numbers $a, b, \Delta a, \Delta b$, then
\[
\sqrt{(a + \Delta a)(b + \Delta b)} \geq \sqrt{ab} + \sqrt{\Delta a \Delta b},
\]
equality holds when $a \Delta b = b \Delta a.$
\end{lemma}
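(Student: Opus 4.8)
The plan is to reduce the claimed inequality to the AM--GM inequality by squaring. First I would observe that all four numbers $a,b,\Delta a,\Delta b$ are positive, so both sides of the asserted inequality are nonnegative; hence it is equivalent to the inequality obtained by squaring both sides. Expanding the left-hand side gives $(a+\Delta a)(b+\Delta b)=ab+a\Delta b+b\Delta a+\Delta a\Delta b$, while squaring the right-hand side gives $ab+2\sqrt{ab\Delta a\Delta b}+\Delta a\Delta b$. After cancelling the common terms $ab$ and $\Delta a\Delta b$, the inequality to be proved becomes $a\Delta b+b\Delta a\geq 2\sqrt{(a\Delta b)(b\Delta a)}$, which is exactly the AM--GM inequality applied to the two nonnegative reals $a\Delta b$ and $b\Delta a$. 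Tracing the AM--GM equality case back through the (reversible) squaring step then shows that equality holds precisely when $a\Delta b=b\Delta a$, as claimed.

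An alternative route, which I would mention briefly, is to read the statement as a direct instance of the Cauchy--Schwarz inequality: with $u=(\sqrt a,\sqrt{\Delta a})$ and $v=(\sqrt b,\sqrt{\Delta b})$ one has $\lVert u\rVert^{2}\lVert v\rVert^{2}\geq \langle u,v\rangle^{2}$, i.e. $(a+\Delta a)(b+\Delta b)\geq(\sqrt{ab}+\sqrt{\Delta a\Delta b})^{2}$, and taking square roots of the two nonnegative sides finishes the argument; equality in Cauchy--Schwarz corresponds to $u$ and $v$ being parallel, i.e. $\sqrt{a}\sqrt{\Delta b}=\sqrt{b}\sqrt{\Delta a}$, which squares to $a\Delta b=b\Delta a$.

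There is no real obstacle in this lemma: the only points requiring a word of care are the legitimacy of squaring (valid because both sides are $\geq 0$) and the bookkeeping of the equality condition across that step. I would present the AM--GM version as the main proof, since it is the most elementary and makes the equality analysis completely transparent.
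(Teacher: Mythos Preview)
Your proposal is correct and essentially identical to the paper's own proof: the paper also squares both sides (noting both are positive), cancels the common terms, and reduces to the AM--GM inequality $a\Delta b + b\Delta a \geq 2\sqrt{ab\Delta a\Delta b}$, with the same equality condition. Your additional Cauchy--Schwarz remark is a nice bonus but not needed.
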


\begin{proof}
Since both sides of the inequality are positive, it is enough to prove
\[
(a + \Delta a)(b + \Delta b) \geq ab + \Delta a \Delta b + 2\sqrt{ab \Delta a \Delta b}.
\]
Simplifying, we get
\[
a \Delta b + b \Delta a \geq 2\sqrt{ab \Delta a \Delta b}.
\]
The arithmetic-geometric mean inequality implies the inequality above is true, and equality holds when $a \Delta b = b \Delta a$.
\end{proof}

\vfil\pagebreak

\section{Matlab Implementation for Computing \( Q_{2,1}(a) \)}
\label{Appendix B}

This appendix provides the Matlab code utilized to solve the optimization problem \( Q_{2,1}(a) \) for determining the exact value of \( V_5 \). The code employs the Newton-Raphson method in conjunction with a penalty function approach to iteratively find the minimizer of the penalized objective function. The code could be modified to solve the problem $P_2(a), Q_{2,2}(a), R_{2,i}(a), S_{2,i}(a)$, and $T_{2,i}(a)$ for $i=1,2,3,4$.

\begin{lstlisting}[style=MATLABStyle, caption={MATLAB Code for computing \( q_{2,1}(a) \)}, label={lst:V4_Code}]
  
  function RESULT=V5()
  VValue_ANS=[0]
  AValue=[0]
  Feasible=[0]
  A=1.6456659
  B=2*cos(pi/7)
  P=10^7
  
  for n=0:P
  
  a=A+(B-A)*n/P

  % For different values of a, the initial guess is different to address the issue of local solution and convergence.

  if a>=1.6456 && a< 1.68
      V5=Algorithm(a,[0.2813599288; 0.5616322755; 0.6297662332; 0.4344555262; 0.1227731004; -0.0705778044],n)
  end
  
  if a>=1.68 && a< 1.72
      V5=Algorithm(a,[-0.0201966983; 0.1955685973; 0.4848202185; 0.6297457947; 0.5212134113; 0.2409501665],n)
  end
  
  if a>=1.72 && a< 1.76
      V5=Algorithm(a,[-0.1711276227; -0.4436859255; -0.6124330758; -0.5502991243; -0.3040722037; -0.0591661108 ],n)
  end
  
  if a>=1.76 && a< 1.81
      V5=Algorithm(a,[-0.1579917997; -0.3946171069; -0.5650843883; -0.5650843821; -0.3946170930; -0.1579917886],n)
  end
  
  % Use two while loops to ensure the algorithm converges to the global minimum.

  while abs(V5(1)-V5(2))>=10^(-7) || abs(V5(3)-1)>=10^(-7)  || abs(V5(4)-a)>=10^(-7) 
    
          V5=Algorithm(a,[2*rand(1)-1 ; 2*rand(1)-1 ; 2*rand(1)-1 ; 2*rand(1)-1 ; 2*rand(1)-1 ;2*rand(1)-1 ],n)
              
  end
  
  if n>=1
     while abs(V5(1)-V5(2))>=10^(-7) || abs(V5(3)-1)>=10^(-7) || abs(V5(1)-VValue_ANS(n))>10^{-7}
          V5=Algorithm(a,[ 2*rand(1)-1 ; 2*rand(1)-1 ; 2*rand(1)-1 ; 2*rand(1)-1 ; 2*rand(1)-1 ;2*rand(1)-1 ],n)
   
     end
  end
  
  % Check if the solution is feasible to the problem $Q_2(a)$.
  if  V5(8)>=-0.001
      feasible=1
  else
      feasible=0
  end
  
  Feasible(n+1)=feasible
  AValue(n+1)=a
  VValue_ANS(n+1)=V5(1,1)
  RESULT=[AValue;VValue_ANS;Feasible]
  end
  
  X=RESULT(1,:)
  Y=RESULT(2,:)
  Feasible=RESULT(3,:)
  
  % Create a new figure
  figure;
  
  % Define marker size
  markerSize = 5; % Adjust the size as needed to make the curve appear smooth
  
  % Plot points where Feasible == 0 with smaller filled red circles
  
  scatter(X(Feasible== 1), Y(Feasible== 1), markerSize, 'filled', 'MarkerFaceColor', 'g');
  
  hold on;  % Hold on to the current plot
  scatter(X(Feasible== 0), Y(Feasible== 0), markerSize, 'filled', 'MarkerFaceColor', 'r');
  
  
  % Adding labels and title for better information
  xlabel('a', 'Interpreter', 'latex');
  ylabel('$\frac{q_{2,1}(a)}{(\sqrt{a}-1)^2)}$', 'Interpreter', 'latex');
  title('Value of $\frac{q_{2,1}(a)}{(\sqrt{a}-1)^2)}$', 'Interpreter', 'latex');
  
  % Adding grid for better visualization
  grid on;
  
  % Hold off to stop adding to the current plot
  hold off;
  
  
  end
  
  
  function x_opt= Algorithm(a,xstart,n)
  %Test of V5
  % increases the r iteratively to ensure the convergence to the global minimum.
  r=100000000
  % Define the function and its gradient
  Vvalue=1000
  
  syms x0 x1 x2 x3 x4 x5 ;

  % Define the constant and whether we need to consider it in the optimization problem.
  h5=2*x0*x5;
  I1=0
  
  % The main penalty function.
  f=(x0+x1+x2+x3+x4+x5)^2+r*(x0^2+x1^2+x2^2+x3^2+x4^2+x5^2-1)^2+r*(2*(x0*x1+x1*x2+x2*x3+x3*x4+x4*x5)- a)^2+r*(I1*h5)^2; 

  % Define the gradient of the function
  grad_f = gradient(f, [x0, x1, x2, x3, x4,x5]);
  
  % Define the Hessian matrix
  hessian_f = hessian(f, [x0, x1, x2, x3, x4,x5]);
  
  % Initial guess for the minimum
  % Newton's method iterations
  max_iterations = 10000000;
  tolerance = 10^(-4);
  for iter = 1:max_iterations
      % Evaluate the function and its gradient at the current point
      f_val = double(subs(f, [x0, x1, x2, x3, x4,x5], xstart'));
      grad_val = double(subs(grad_f, [x0, x1, x2, x3, x4,x5], xstart'));
  
      % Check convergence
      if norm(grad_val) < tolerance
          
  fprintf('Converged to a minimum: [%.10f; %.10f; %.10f; %.10f; %.10f; %.10f]\n', xstart(1), xstart(2), xstart(3),xstart(4),xstart(5),xstart(6));
          
          break;
      end
      
      % Calculate the Newton update direction
      hessian_val = double(subs(hessian_f, [x0, x1, x2, x3, x4,x5], xstart'));
      delta_x = -inv(hessian_val) * grad_val;
      
      % Update the current point
      xstart = xstart + delta_x;
      
      % Display iteration information
  fprintf('Iteration %d: [%.8f, %.8f, %.8f, %.8f, %.8f, %.8f], Gradient norm: %f\n', iter, xstart(1), xstart(2), xstart(3),xstart(4), xstart(5), xstart(6), norm(grad_val));
  
  % Calculate the corresponding cosine polynomial.
  a0=xstart(1)^2+xstart(2)^2+xstart(3)^2+xstart(4)^2+xstart(5)^2+xstart(6)^2;
  a1=2*(xstart(1)*xstart(2)+xstart(2)*xstart(3)+xstart(3)*xstart(4)+xstart(4)*xstart(5)+xstart(5)*xstart(6));
  a2=2*(xstart(1)*xstart(3)+xstart(2)*xstart(4)+xstart(3)*xstart(5)+xstart(4)*xstart(6));
  a3=2*(xstart(1)*xstart(4)+xstart(2)*xstart(5)+xstart(3)*xstart(6));
  a4=2*(xstart(1)*xstart(5)+xstart(2)*xstart(6));
  a5=2*( xstart(1)*xstart(6));
  
  % Calculate the value of the value of $v(f)$ in two different methods to ensure the convergence to the global minimum.
  Vvalue=(a1+a2+a3+a4+a5)/(sqrt(a1)-sqrt(a0))^2
  Vvalue2=((xstart(1)+xstart(2)+xstart(3)+xstart(4)+xstart(5)+xstart(6))^2-a0)/(sqrt(a1)-sqrt(a0))^2
  x_opt=[Vvalue,Vvalue2,a0,a1,a2,a3,a4,a5,n]
  
  end
  
  
  if iter == max_iterations
      fprintf('Maximum iterations reached without convergence\n');
      V5=[0,1,0,0,0,0,0,0,0]
  end
  
  end
\end{lstlisting}
\vfil\pagebreak

\bibliographystyle{plain} 
\bibliography{Optimal_Cosine_Polynomials_for_Riemann_Zeta_Zero-Free_Region.bib} 
\end{document}